\documentclass[12pt]{amsart}

\usepackage{amsmath,amssymb,amsfonts,amsthm}
\usepackage{mathtools}
\usepackage[margin=1in]{geometry}

\usepackage{graphicx}
\usepackage{tikz}
\usetikzlibrary{arrows.meta,graphs,shapes.geometric,decorations.pathreplacing}
\usepackage{pgfplots}
\pgfplotsset{compat=1.18}

\usepackage{booktabs}
\usepackage{multicol}
\usepackage{enumitem}

\usepackage[numbers,sort&compress]{natbib}

\usepackage{xcolor}
\definecolor{myteal}{RGB}{0,128,128}
\usepackage[pdfborder={0 0 0}]{hyperref}
\hypersetup{
  colorlinks=true,
  citecolor=myteal,
  linkcolor=blue,
  urlcolor=myteal
}

\usepackage{aliascnt}
\usepackage[capitalise, noabbrev]{cleveref}

\usepackage{parskip}

\usepackage{listings}
\newcommand{\newaliastheorem}[3]{%
  \newaliascnt{#1}{thm}%
  \newtheorem{#1}[#1]{#2}%
  \aliascntresetthe{#1}%
  \crefname{#1}{#2}{#3}%
  \Crefname{#1}{#2}{#3}%
}

\theoremstyle{plain}
\newtheorem{thm}{Theorem}[section]
\crefname{thm}{Theorem}{Theorems}
\Crefname{thm}{Theorem}{Theorems}

\newaliastheorem{lem}{Lemma}{Lemmas}
\newaliastheorem{prop}{Proposition}{Propositions}
\newaliastheorem{cor}{Corollary}{Corollaries}
\newaliastheorem{clm}{Claim}{Claims}
\newaliastheorem{conj}{Conjecture}{Conjectures}

\theoremstyle{definition}
\newaliastheorem{defn}{Definition}{Definitions}
\newaliastheorem{ex}{Example}{Examples}
\newaliastheorem{notation}{Notation}{Notations}
\newaliastheorem{setup}{Set-up}{Set-ups}
\newaliastheorem{remark}{Remark}{Remarks}
\newaliastheorem{problem}{Problem}{Problems}
\newaliastheorem{question}{Question}{Questions}
\newaliastheorem{observation}{Observation}{Observation}

\setlist[enumerate]{leftmargin=*,itemsep=2pt,topsep=4pt}
\SetEnumerateShortLabel{a}{\textup{(\alph*)}}
\SetEnumerateShortLabel{A}{\textup{(\Alph*)}}
\SetEnumerateShortLabel{1}{\textup{(\arabic*)}}
\SetEnumerateShortLabel{i}{\textup{(\roman*)}}
\SetEnumerateShortLabel{I}{\textup{(\Roman*)}}

\makeatletter
\newtheorem*{rep@theorem}{\rep@title}
\newcommand{\newreptheorem}[2]{%
\newenvironment{rep#1}[1]{%
 \def\rep@title{#2 \ref{##1}}%
 \begin{rep@theorem}}%
 {\end{rep@theorem}}}
\makeatother

\newreptheorem{theorem}{Theorem}

\DeclareMathOperator{\reg}{reg}
\DeclareMathOperator{\pdim}{pdim}
\DeclareMathOperator{\Hilb}{Hilb}
\DeclareMathOperator{\ord}{ord}
\DeclareMathOperator{\bight}{bight}
\DeclareMathOperator{\height}{ht}

\begin{document}

\title{Algebraic Invariants of Edge Ideals Under Suspension}

\author[Kara]{Selvi Kara}
\address[S.~Kara]{Department of Mathematics, Bryn Mawr College, Bryn Mawr, PA 19010}
\email{\href{mailto:skara@brynmawr.edu}{skara@brynmawr.edu}}

\author[Vien]{Dalena Vien}
\address[D.~Vien]{Department of Mathematics, Bryn Mawr College, Bryn Mawr, PA 19010}
\email{\href{mailto:dvien@brynmawr.edu}{dvien@brynmawr.edu}}

\begin{abstract}
The central question of this paper is: how do algebraic invariants of edge ideals change under natural graph operations?
We study this question through the lens of \emph{suspensions}.
The (full) suspension of a graph is obtained by adjoining a new vertex adjacent to every vertex of the original graph; this construction is well-understood in the literature. Motivated by the fact that regularity is preserved under full suspension while projective dimension becomes maximal, we refine the construction to \emph{selective suspensions}, where the new vertex is joined only to a prescribed subset of vertices. We focus on two extremal choices: minimal vertex covers and maximal independent sets.
For suspensions over minimal vertex covers of an arbitrary graph, regularity is preserved and projective dimension increases by one.
Moreover, the independence polynomial changes in a controlled way, allowing us to track $\mathfrak a$-invariants under cover suspension.
In contrast, the analogous uniform behavior fails in general for suspensions over maximal independent sets.
We therefore analyze paths and cycles and give a complete description: projective dimension always increases by one, and regularity and the $\mathfrak a$-invariant are preserved except for a unique extremal family of paths, where both invariants increase by one.
\end{abstract}

\maketitle

\section{Introduction}

Edge ideals provide a fertile playground for observing the following broad theme in combinatorial commutative algebra:
graphs encode algebraic information and structured changes to a graph can induce controlled changes in the homological invariants of the corresponding ideal.  Given a
finite simple graph $G$ on vertex set $V(G)=\{x_1,\dots,x_n\}$, its edge ideal
\[
I(G)=(x_ix_j:\ \{x_i,x_j\}\in E(G))\subseteq R=\Bbbk[x_1,\dots,x_n]
\]
is a squarefree monomial ideal. The minimal free resolution of $R/I(G)$ captures both the combinatorics of $G$
and the topology of its independence complex $\Delta(G)$; see, for instance,
\cite{MoreyVillarreal,herzog2011monomial,Hochster77,EagonReiner98}.  A guiding problem in the area is
to understand how graph operations affect graded Betti numbers and related algebraic invariants such as the
Castelnuovo--Mumford regularity $\reg(R/I(G))$ and projective dimension $\pdim(R/I(G))$.  This
perspective complements an extensive literature relating these invariants to graph parameters and
induced substructures; see, e.g., \cite{ProjDim,ha2008monomial,IndMatch,RegMatch,CWRegDeg}.

In this paper we study this problem through \emph{suspensions of graphs}.
The classical (full) suspension $\widehat{G}$ of a graph $G$ is obtained by adjoining a new vertex adjacent to every vertex of $G$.  Suspension provides a way to connect the toric edge ring of $\widehat{G}$ with the Rees algebra of the edge ideal of $G$ (see  \cite{herzog2011monomial, Hibi_suspension_toric}).
More generally,  the concept of \emph{$S$-suspension} $G^{S}$ for an independent set $S$, obtained by adjoining a new vertex adjacent to all vertices in $V(G)\setminus S$, was introduced in \cite{IndMatch}.
Equivalently, this is the construction obtained by adjoining a new vertex adjacent to a (not necessarily minimal) vertex cover.

Motivated by these constructions, we work in a common general framework.
Given a graph $G=(V,E)$ and a subset $\mathcal C\subseteq V$, we form a new graph $G(\mathcal C)$ by adjoining a vertex $z$ and connecting $z$ to every vertex of $\mathcal C$ (and to no other vertices).
We call $G(\mathcal C)$ the \emph{$\mathcal C$-suspension} of $G$ and refer to $z$ as the \emph{suspension vertex}.
If $\mathcal C$ is a vertex cover then $G(\mathcal C)$ agrees with the $S$-suspension of \cite{IndMatch} (take $S=V\setminus \mathcal C$).
On the ideal side, this corresponds to
\[
I\bigl(G(\mathcal C)\bigr) \;=\; I(G) + (z x_i : x_i \in \mathcal C) \subseteq S:=R[z].
\]
Our goal is to understand how this controlled addition of a single vertex with prescribed neighborhood
changes three graded invariants of the quotient: regularity, projective dimension, and the $\mathfrak a$-invariant.
The case $\mathcal{C}=V$ recovers the suspension $\widehat{G}$, which serves as a baseline for the behavior of these invariants.
We then turn to two canonical choices for $\mathcal{C}$ that sit at opposite extremes and are complementary:
minimal vertex covers and maximal independent sets.  Suspension over a minimal vertex cover produces a uniform
effect for all graphs, while suspension over a maximal independent set can be sensitive to the underlying graph.
Our results show that this sensitivity already appears in paths, where the
behavior is rigid except for a unique extremal configuration of paths.

In our study of suspension over minimal vertex covers, we prove that
regularity is preserved and projective dimension increases by one (\Cref{thm:cover-cone-reg-pdim}).  Moreover, the independence
polynomial changes by an explicit additive term, which lets us track the multiplicity $M(G)$ of $-1$ as a
root of the independence polynomial (concept introduced in \cite{biermann2026realizableregdeghpairs}) and hence compare $\mathfrak a$--invariants; see \Cref{prop:ainv-min-vertex-cover}. This type of suspension over vertex covers was studied in \cite{IndMatch} and it was shown that regularity is preserved under suspension over any vertex cover (not necessarily minimal). In our context, it is crucial to work with minimal vertex covers to prove our projective dimension result.

Outside the minimal vertex cover regime, suspension can depend strongly on the choice of $\mathcal{C}$.
We show this dependence already for maximal independent sets, and we give a complete analysis for
cycles and paths.  For cycles, suspending over any maximal independent set preserves regularity and the
$\mathfrak a$-invariant, and increases projective dimension by one (\Cref{thm:cycle_reg_pdim,{thm:cycle_a-inv}}).
For paths, projective dimension again always increases by one, while regularity and the
$\mathfrak a$-invariant are preserved except for a unique extremal configuration  (when $n\equiv 1 \pmod 3$ and $\mathcal C=\{x_1,x_4,\dots,x_{3k+1}\}$), where both
invariants increase by one (\Cref{thm:path_reg,thm:path_pdim,thm:path_a_inv}).

From the technical viewpoint, the proofs combine Hochster's formula and induced subcomplexes with two
complementary tools: Mayer--Vietoris for unions involving cones, and discrete Morse theory for
precise control of top homology in the cycle and path cases \cite{Forman98,KozlovBook,JonssonGraphs}.
From the conceptual viewpoint, selective suspension provides a tractable graph operation rich enough to
exhibit both rigidity and sharp thresholds, making it a useful approach for understanding how
local graph structure propagates to homological data.

\textbf{Organization.}
\Cref{sec:preliminaries} gathers the background material and relevant results from the literature needed in the sequel.
\Cref{sec:coning-operations} develops the full suspension and suspensions over vertex covers; in particular, it proves \Cref{thm:cover-cone-reg-pdim}, showing that suspending over a minimal vertex cover increases projective dimension by one.
\Cref{sec:cycles} is devoted to cycles and proves \Cref{thm:cycle_reg_pdim,thm:cycle_a-inv}; the \emph{wide-spoke} case (when $n\equiv 0 \pmod 3$ and the maximal independent set has size $\lceil \frac{n}{3} \rceil$) is handled via discrete Morse theory in \Cref{subsec:wide_spokes}.
\Cref{sec:paths} treats paths and proves \Cref{thm:path_reg,thm:path_pdim,thm:path_a_inv}, including the unique extremal configuration.
Finally, \Cref{sec:final} concludes with further directions and questions suggested by these results.

\section{Preliminaries}\label{sec:preliminaries}

Throughout, $G=(V,E)$ denotes a finite simple graph on the vertex set
$V=\{x_1,\dots,x_n\}$, and $R=\Bbbk[x_1,\dots,x_n]$ is the standard graded polynomial ring.
For $W\subseteq V$ we write $G|_W$ for the induced subgraph on $W$, and for $v\in V$ we write
$G-v:=G|_{V\setminus\{v\}}$. We use reduced homology $\widetilde H_\ast(-;\Bbbk)$ over a fixed
field~$\Bbbk$.

\subsection{Graphs, edge ideals, and independence complexes}

\begin{defn}\label{def:independent-sets}
A subset $X\subseteq V$ is \emph{independent} if its vertices are pairwise non-adjacent in $G$.
The \emph{independence number} of $G$ is
\[
\alpha(G):=\max\{|X| : X\subseteq V \text{ is independent}\}.
\]
\end{defn}

\begin{defn}\label{def:vertex-covers}
A subset $\mathcal{M} \subseteq V$ is a \emph{vertex cover} if every edge of $G$ is incident to at least one
vertex of $\mathcal{M} $. A vertex cover is \emph{minimal} if it contains no proper subset that is also a
vertex cover.
\end{defn}

The complement operation relates independent sets and vertex covers: $X\subseteq V$ is independent
if and only if $V\setminus X$ is a vertex cover. In particular, if $X$ is a \emph{maximal}
independent set, then $V\setminus X$ is a \emph{minimal} vertex cover.

\begin{defn}\label{def:edge-ideal}
The \emph{edge ideal} of $G$ is the squarefree monomial ideal
\[
I(G)=(x_ix_j : \{x_i,x_j\}\in E)\ \subseteq\ R.
\]
\end{defn}

\begin{defn}\label{def:bight}
The \emph{big height} of $I(G)$ is
\[
\bight I(G) := \max\{\height \mathfrak p : \mathfrak p \text{ is a minimal prime of } I(G)\}.
\]
Equivalently, $\bight I(G)$ is the maximum cardinality of a minimal vertex cover of~$G$.
\end{defn}

\begin{lem}\label{lem:dim-alpha}
One has $\dim(R/I(G))=\alpha(G)$.
\end{lem}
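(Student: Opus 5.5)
The plan is to compute the Krull dimension of $R/I(G)$ from its minimal primes. Since $I(G)$ is a squarefree monomial ideal, it is radical, and its minimal primes are monomial primes generated by subsets of the variables. We have $\dim R/I(G)=\max\{\dim R/\mathfrak p\}$, the maximum taken over the minimal primes $\mathfrak p$ of $I(G)$. For a prime $\mathfrak p=(x_i : x_i\in \mathcal M)$ this gives $\dim R/\mathfrak p=n-|\mathcal M|$.

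The key step is to identify which monomial primes contain $I(G)$. A prime $P_{\mathcal M}=(x_i:x_i\in\mathcal M)$ contains $I(G)$ exactly when it contains each generator $x_ix_j$. Because $P_{\mathcal M}$ is prime, $x_ix_j\in P_{\mathcal M}$ holds if and only if $x_i\in\mathcal M$ or $x_j\in\mathcal M$. So $I(G)\subseteq P_{\mathcal M}$ if and only if $\mathcal M$ is a vertex cover of $G$, and the minimal primes of $I(G)$ are precisely the $P_{\mathcal M}$ with $\mathcal M$ a minimal vertex cover. This is the same correspondence already used implicitly in \Cref{def:bight}.

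Given this, we obtain
\[
\dim R/I(G)=n-\min\{|\mathcal M| : \mathcal M \text{ a (minimal) vertex cover}\}.
\]
By the complement correspondence recorded after \Cref{def:vertex-covers}, $X$ is independent if and only if $V\setminus X$ is a vertex cover. Therefore minimizing $|\mathcal M|$ is the same as maximizing $|V\setminus\mathcal M|$ over independent sets, which gives $\alpha(G)$. The minimum over all vertex covers is attained at a minimal one, so restricting to minimal covers causes no problem.

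The only real obstacle is justifying that the minimal primes of a squarefree monomial ideal are generated by variables. To handle it, I would cite the standard fact that associated primes of monomial ideals are monomial primes \cite{herzog2011monomial}. Alternatively, one can use the Stanley--Reisner viewpoint: $I(G)$ is the Stanley--Reisner ideal of $\Delta(G)$, and $\dim \Bbbk[\Delta]=\dim\Delta+1$, which is the maximum size of a face, i.e.\ $\alpha(G)$.
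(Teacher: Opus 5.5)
Your proposal is correct and follows essentially the same route as the paper: identify the minimal primes of $I(G)$ with the primes $(x_i : x_i\in\mathcal M)$ for minimal vertex covers $\mathcal M$, compute $\dim R/I(G)=n-\min|\mathcal M|$, and pass to independent sets by complementation. You also justify that minimal primes of a squarefree monomial ideal are generated by variables, which the paper takes for granted. Your Stanley--Reisner alternative, $\dim \Bbbk[\Delta(G)]=\dim\Delta(G)+1=\alpha(G)$, is a valid shortcut as well.
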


\begin{proof}
Minimal primes of $I(G)$ correspond to minimal vertex covers: if $C$ is a minimal vertex cover,
then $\mathfrak p_C:=(x_i : x_i\in C)$ is a minimal prime of $I(G)$, and every minimal prime arises
in this way. Hence
\[
\dim(R/I(G)) = n - \height(I(G)) = n - \min\{|C| : C \text{ is a vertex cover}\}.
\]
As noted above, the minimum vertex cover size equals $n-\alpha(G)$, so $\dim(R/I(G))=\alpha(G)$.
\end{proof}

\begin{defn}\label{def:independence-complex}
The \emph{independence complex} of $G$ is the simplicial complex
\[
\Delta(G) = \{X\subseteq V : X \text{ is independent in } G\}.
\]
\end{defn}

The minimal nonfaces of $\Delta(G)$ are precisely the edges of $G$. Hence,
the Stanley--Reisner ideal
of $\Delta(G)$ is $I_{\Delta(G)}=I(G)$. In particular,
\[
R/I(G)\ \cong\ \Bbbk[\Delta(G)].
\]
Moreover, induced subgraphs correspond to induced subcomplexes:
\[
\Delta(G)|_W =  \Delta(G|_W)\qquad (W\subseteq V).
\]

\subsection{Graded Betti numbers, regularity, and Hochster's formula}

For a finitely generated graded $R$-module $M$, we write $\beta_{i,j}(M)$ for its graded Betti numbers,
and set
\begin{align*}
\pdim M &:= \max\{i:\beta_{i,j}(M)\neq 0 \text{ for some } j\},\\
\reg M &:= \max\{j-i:\beta_{i,j}(M)\neq 0\}.
\end{align*}
We repeatedly use the following standard bounds coming from short exact sequences.

\begin{lem}\label{lem:ses2}
Given a short exact sequence of finitely generated graded $R$-modules
\[
0 \longrightarrow A \longrightarrow B \longrightarrow C \longrightarrow 0,
\]
one has
\begin{enumerate}
\item[(a)] $\pdim B \le \max\{\pdim A,\pdim C\}$,
\item[(b)] $\pdim A \le \max\{\pdim B,\pdim C-1\}$,
\item[(c)] $\pdim C \le \max\{\pdim A+1,\pdim B\}$,
\item[(d)] $\reg B \le \max\{\reg A,\reg C\}$,
\item[(e)] $\reg A \le \max\{\reg B,\reg C+1\}$,
\item[(f)] $\reg C \le \max\{\reg A-1,\reg B\}$.
\end{enumerate}
\end{lem}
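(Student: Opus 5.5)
The plan is to derive all six inequalities from the long exact sequence of $\operatorname{Tor}$. Set $T_i(M)_j:=\operatorname{Tor}_i^R(M,\Bbbk)_j$, so that $\beta_{i,j}(M)=\dim_\Bbbk T_i(M)_j$. Tensoring the given short exact sequence with $\Bbbk$ gives, in each internal degree $j$, a long exact sequence
\[
\cdots\to T_{i+1}(C)_j\to T_i(A)_j\to T_i(B)_j\to T_i(C)_j\to T_{i-1}(A)_j\to\cdots
\]
of finite-dimensional $\Bbbk$-vector spaces. The only fact we then use is exactness at a single spot: if the two neighbouring terms of a given term vanish, so does that term.

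For projective dimension we read off vanishing in homological degree $i$.
\begin{itemize}
\item[(a)] If $i>\max\{\pdim A,\pdim C\}$, then $T_i(A)=T_i(C)=0$, and hence $T_i(B)=0$.
\item[(b)] If $i>\max\{\pdim B,\pdim C-1\}$, then $T_i(B)=0$ and $T_{i+1}(C)=0$, and hence $T_i(A)=0$.
\item[(c)] If $i>\max\{\pdim A+1,\pdim B\}$, then $T_i(B)=0$ and $T_{i-1}(A)=0$, and hence $T_i(C)=0$.
\end{itemize}

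For regularity we use $\reg M=\max\{j-i:T_i(M)_j\neq0\}$ and argue at a fixed pair $(i,j)$.
\begin{itemize}
\item[(d)] If $j-i>\max\{\reg A,\reg C\}$, then $T_i(A)_j=T_i(C)_j=0$, and hence $T_i(B)_j=0$.
\item[(e)] The neighbours of $T_i(A)_j$ are $T_{i+1}(C)_j$ and $T_i(B)_j$. The first vanishes once $j-(i+1)>\reg C$, that is, once $j-i>\reg C+1$. The second vanishes once $j-i>\reg B$. So $T_i(A)_j=0$ whenever $j-i>\max\{\reg B,\reg C+1\}$.
\item[(f)] The neighbours of $T_i(C)_j$ are $T_i(B)_j$ and $T_{i-1}(A)_j$. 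The second vanishes once $j-(i-1)>\reg A$, that is, once $j-i>\reg A-1$. So $T_i(C)_j=0$ whenever $j-i>\max\{\reg A-1,\reg B\}$.
\end{itemize}

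There is no real obstacle. The only care needed is the index bookkeeping in (e) and (f): the connecting map shifts homological degree by one but keeps the internal degree $j$ fixed, and this is exactly what produces the $\pm1$ shifts. One should also note that the convention $\pdim 0=\reg 0=-\infty$ makes the statements hold trivially when a module is zero.
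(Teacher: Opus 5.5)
Your proof is correct: each of the six inequalities follows from exactness of the graded long exact sequence of $\operatorname{Tor}^R_\bullet(-,\Bbbk)$ at a single spot. The $\pm 1$ shifts in (b), (c), (e), (f) come from the connecting map, which changes homological degree but preserves internal degree, exactly as you describe. The paper gives no proof of its own and simply records these as standard bounds, so your derivation is the standard argument the authors are relying on.
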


\begin{lem}\label{lem:ses}
Let $J\subseteq R$ be a homogeneous ideal and let $z$ be a variable. The short exact sequence
\[
0 \longrightarrow R/(J:z)(-1)\xlongrightarrow{\ \cdot z\ } R/J \longrightarrow R/(J,z) \longrightarrow 0
\]
implies
\[
\reg R/J \le \max\{\reg R/(J,z),\ \reg R/(J:z)+1\}.
\]
Moreover, if $\reg R/(J,z)\neq \reg R/(J:z)+1$, then equality holds with the larger of the two.
\end{lem}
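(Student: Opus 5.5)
The plan is to apply \Cref{lem:ses2} to the displayed sequence after checking that it is a short exact sequence of graded modules. The map $R/(J:z)(-1)\to R/J$, $\bar f\mapsto \overline{zf}$, is well defined and has degree zero because of the shift. It is injective since $zf\in J$ exactly when $f\in J:z$. Its image is $(zR+J)/J$, and the cokernel is therefore $R/(J,z)$. So the sequence is exact.

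With $A=R/(J:z)(-1)$, $B=R/J$ and $C=R/(J,z)$, we have $\reg A=\reg R/(J:z)+1$, because the shift raises every degree $j$ by one while $i$ is unchanged. Part (d) of \Cref{lem:ses2} then gives $\reg R/J\le\max\{\reg R/(J:z)+1,\ \reg R/(J,z)\}$, which is the inequality.

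For the equality statement, write $a=\reg A$ and $c=\reg C$, and assume $a\neq c$. There are two cases.

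\begin{enumerate}
\item[(1)] If $c>a$, part (f) gives $c\le\max\{a-1,\reg B\}$. Since $a-1<c$, this forces $\reg B\ge c$. Combined with the upper bound, $\reg B=c=\max\{a,c\}$.
\item[(2)] If $a>c$, part (e) gives $a\le\max\{\reg B,\ c+1\}$. This forces $\reg B\ge a$ unless $a\le c+1$, that is, unless $a=c+1$.
\end{enumerate}

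The boundary case $a=c+1$ in (2) is the main obstacle, since the inequalities of \Cref{lem:ses2} alone do not decide it. I would settle it with the long exact sequence of $\operatorname{Tor}(-,\Bbbk)$ in a fixed internal degree:
\[
\operatorname{Tor}_{i+1}(C)_{j}\to\operatorname{Tor}_i(A)_j\to\operatorname{Tor}_i(B)_j .
\]
Choose $(i,j)$ with $\beta_{i,j}(A)\neq0$ and $j-i=a$. The connecting term $\operatorname{Tor}_{i+1}(C)_j$ would need $j-(i+1)=a-1=c$, so it can be nonzero, and the argument has to say more than the index count.

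Here I would use the special structure. Since $z$ annihilates $C=R/(J,z)$ and $z$ is a variable, the resolution of $C$ over $R$ is the resolution of $R'/J'$ tensored with the Koszul complex on $z$, where $R'$ is the polynomial ring in the remaining variables and $J'$ is the image of $J$. The connecting map is induced by multiplication by $z$. Because $z$ is the variable split off in that tensor decomposition, the Koszul part of the connecting map is zero on $\operatorname{Tor}$. Hence $\operatorname{Tor}_i(A)_j$ injects into $\operatorname{Tor}_i(B)_j$, and $\reg B\ge a$. This gives $\reg R/J=\max\{a,c\}$ in every case where $a\neq c$.

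If that Koszul argument gets tangled, the fallback is the squarefree monomial setting the paper actually uses. There, Hochster's formula together with the decomposition of $\Delta$ into the link and deletion of $z$ makes the injectivity transparent. Only in the boundary case $a=c+1$ is this finer argument needed.
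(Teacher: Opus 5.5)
Your exactness check, the identity $\reg A=\reg R/(J:z)+1$, the inequality via \Cref{lem:ses2}(d), and your cases $c>a$ and $a>c+1$ are all correct. You were right to flag the boundary case $a=c+1$. It is a genuine gap, and it cannot be closed, because the equality claim is false there. Take $R=\Bbbk[x,z]$ and $J=(x)$. Then $J:z=(x)$ and $(J,z)=(x,z)$, so $\reg R/(J,z)=0\neq 1=\reg R/(J:z)+1$, yet $\reg R/J=0$. The failure also occurs for edge ideals in which $z$ appears in $J$. For $J=I(P_4)=(x_1x_2,x_2x_3,x_3x_4)$ and $z=x_1$, one has $J:x_1=(x_2,x_3x_4)$ and $(J,x_1)=(x_1,x_2x_3,x_3x_4)$, both with quotient regularity $1$. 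So the hypothesis $1\neq 2$ holds, but $\reg R/I(P_4)=\lfloor 5/3\rfloor=1$, not $2$. In particular, your squarefree fallback via Hochster's formula cannot succeed either.

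The step that fails is your claim that the connecting map $\operatorname{Tor}_{i+1}(C)_j\to\operatorname{Tor}_i(A)_j$ vanishes on the Koszul part. Compute $\operatorname{Tor}(-,\Bbbk)$ with the Koszul complex $K_\bullet$ on the variables. The cycle $e_z\otimes\bar 1\in K_1\otimes C$ lifts to $e_z\otimes 1\in K_1\otimes B$, whose boundary $z\in B$ is the image of the degree-$1$ generator $\bar 1\in A$. So the connecting map sends this class to the nonzero class of the generator of $A$. When $z$ is regular on $R/J$, as in the first example, the map $A=B(-1)\to B$ is multiplication by $z$, an element of the homogeneous maximal ideal $\mathfrak m$. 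Such a map induces zero on $\operatorname{Tor}(-,\Bbbk)$, so all of $\operatorname{Tor}(A)$ is absorbed by the connecting map and $\reg B=c=a-1$.

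The inequalities of \Cref{lem:ses2} give the standard statement, with the $+1$ in the other place: $\reg R/J=\max\{\reg R/(J,z),\reg R/(J:z)+1\}$ whenever $a\neq c+1$, i.e.\ whenever $\reg R/(J:z)\neq\reg R/(J,z)$. Your case (1) argument via (f) works verbatim for $c=a$, and your case (2) covers $a\ge c+2$. So you already have a complete proof of this corrected version. The paper states the lemma without proof and never uses its second sentence; later arguments cite only \Cref{lem:ses2}, so the misstatement does not propagate. You should prove the corrected hypothesis rather than try to force the case $a=c+1$.
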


\begin{remark}{(Hochster's Formula)}\label{rem:Hochster}
Let $\Delta$ be a simplicial complex on vertex set $V=\{1,\ldots,n\}$, and let
$R=\Bbbk[x_1,\ldots,x_n]$. Then
\[
\beta_{i,j}(R/I_{\Delta})=\sum_{\substack{W\subseteq V\\ |W|=j}}
\dim_{\Bbbk}\widetilde H_{j-i-1}(\Delta|_W;\Bbbk),
\]
where $\Delta|_W$ denotes the induced subcomplex on $W$.
\end{remark}

\begin{remark}\label{rem:reg_pdim}
For a finite graph $G$, Hochster's formula and $I_{\Delta(G)}=I(G)$ give
\begin{enumerate}
\item[(a)] $
\reg R/I(G)=1+\max\{r:\widetilde H_r(\Delta(G|_W);\Bbbk)\neq 0\text{ for some }W\subseteq V(G)\},
$
\item[(b)] $
\pdim R/I(G)=\max\{|W|-1-r:\widetilde H_r(\Delta(G|_W);\Bbbk)\neq 0\text{ for some }W\subseteq V(G)\}.
$
\end{enumerate}
\end{remark}

\begin{remark}\label{rem:bight_pdim}
    It follows from \cite[Proposition 4.7.]{ProjDim} that 
    $$\pdim R/I(G)\geq \bight I(G)$$
\end{remark}

If $G_1$ and $G_2$ are graphs, we denote by $G_1\sqcup G_2$ their disjoint union and by $K_2$ a single
edge.

\begin{lem}\label{lem:ind-join}
For finite graphs $G_1$ and $G_2$ there is a natural isomorphism
\[
  \Delta(G_1\sqcup G_2)\ \cong\ \Delta(G_1)*\Delta(G_2),
\]
where $*$ denotes the simplicial join.
In particular, $\Delta(K_2)\cong S^0$, and if $G$ is a disjoint union of $k$ edges then
\[
  \Delta(G)\ \cong\ (S^0)^{*k}\ \simeq\ S^{k-1}.
\]
\end{lem}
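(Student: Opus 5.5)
The plan is to build the isomorphism directly from the definitions of the join and the independence complex, and then to obtain the statements about $K_2$ and disjoint unions of edges from it.

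First we identify the vertex sets. The vertex set of $G_1\sqcup G_2$ is $V(G_1)\sqcup V(G_2)$. A subset $X$ of this disjoint union decomposes uniquely as $X=X_1\sqcup X_2$ with $X_i\subseteq V(G_i)$. There are no edges between $V(G_1)$ and $V(G_2)$, so every edge of $G_1\sqcup G_2$ lies inside one of the two vertex sets. It follows that $X$ is independent in $G_1\sqcup G_2$ if and only if $X_1$ is independent in $G_1$ and $X_2$ is independent in $G_2$.

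Next we recall the definition of the join. By definition, $\Delta(G_1)*\Delta(G_2)$ consists of all sets $\sigma\sqcup\tau$ with $\sigma\in\Delta(G_1)$ and $\tau\in\Delta(G_2)$, on the vertex set $V(G_1)\sqcup V(G_2)$. The previous step shows that these are exactly the faces of $\Delta(G_1\sqcup G_2)$. Hence the identity map on vertices is an isomorphism of simplicial complexes, and this is the natural isomorphism in the statement.

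For the consequences, $K_2$ has vertices $a,b$ joined by an edge. Its independent sets are therefore $\emptyset$, $\{a\}$ and $\{b\}$, and these form two isolated points, i.e.\ $S^0$. For a disjoint union of $k$ edges we apply the join isomorphism inductively on $k$, which gives $\Delta(G)\cong (S^0)^{*k}$. We then use the standard fact that $S^0*X$ is the unreduced suspension $\Sigma X$ of $X$. Since $\Sigma S^{m}\cong S^{m+1}$, induction yields $(S^0)^{*k}\simeq S^{k-1}$. Concretely, this complex is the boundary of the $k$-dimensional cross-polytope, so the homotopy equivalence is in fact a homeomorphism.

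The argument is routine. The only point needing care is to state clearly which definition of join is used: either the combinatorial one on disjoint vertex sets, or the geometric one. The fact that the join of spheres is a sphere should be cited as standard, for instance from Kozlov's book \cite{KozlovBook}.
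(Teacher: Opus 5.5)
Your proposal is correct and takes the same approach as the paper. Independent sets of $G_1\sqcup G_2$ correspond exactly to pairs of independent sets of $G_1$ and $G_2$, which gives the join isomorphism. The statements for $K_2$ and for disjoint unions of edges then follow by induction, using that $(S^0)^{*k}$ is the boundary of the $k$-dimensional cross-polytope and hence a $(k-1)$-sphere.
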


\begin{proof}
An independent set in $G_1\sqcup G_2$ is uniquely determined by its restrictions to $G_1$ and~$G_2$,
and each restriction is an independent set. This gives the isomorphism with the join, see for
example \cite{JonssonGraphs, KozlovInd}. The statements for $K_2$ and a disjoint union of edges follow
by induction, together with the fact that $(S^0)^{*k}$ is a sphere of dimension $k-1$.
\end{proof}

\begin{remark}
Note that $S^{k-1}$ can be viewed as the boundary of a cross-polytope.
\end{remark}

We also make use of the following well-known result from the literature.

\begin{lem}\label{lem:disjoint_sum}
Let $G$ be a finite simple graph. If $G$ can be written as a disjoint union of graphs
$G_1,\ldots,G_s$ (on pairwise disjoint vertex sets), then
\[
\reg R/I(G) =  \sum_{i=1}^s \reg R_i/I(G_i),
\]
where $R_i=\Bbbk[V(G_i)]$.
\end{lem}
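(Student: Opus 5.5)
The statement to prove is \Cref{lem:disjoint_sum}: regularity is additive over disjoint unions. I would derive it from Hochster's formula in the form of \Cref{rem:reg_pdim}(a) together with the join decomposition of \Cref{lem:ind-join}. By induction on $s$ it suffices to treat $s=2$, with $G=G_1\sqcup G_2$. Every $W\subseteq V(G)$ splits uniquely as $W=W_1\sqcup W_2$ with $W_i\subseteq V(G_i)$. Moreover $G|_W=G_1|_{W_1}\sqcup G_2|_{W_2}$, so \Cref{lem:ind-join} gives
\[
\Delta(G|_W)\cong \Delta(G_1|_{W_1})*\Delta(G_2|_{W_2}).
\]

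The key input is the Künneth formula for joins over a field:
\[
\widetilde H_{r}(A*B;\Bbbk)\cong \bigoplus_{p+q=r-1}\widetilde H_p(A;\Bbbk)\otimes_\Bbbk \widetilde H_q(B;\Bbbk),
\]
where the void complex and the empty complex $\{\emptyset\}$ need care. With the convention $\widetilde H_{-1}(\{\emptyset\})=\Bbbk$, this covers $W_i=\emptyset$, since then $\Delta(G_i|_{\emptyset})=\{\emptyset\}$ and the join is the other factor. Consequently $\widetilde H_r(\Delta(G|_W))\neq 0$ exactly when there are $p,q$ with $p+q=r-1$, $\widetilde H_p(\Delta(G_1|_{W_1}))\neq0$ and $\widetilde H_q(\Delta(G_2|_{W_2}))\neq 0$. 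Here we use that a tensor product of vector spaces over a field is nonzero iff both factors are.

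Now set $m_i=\max\{p:\widetilde H_p(\Delta(G_i|_{U}))\neq0,\ U\subseteq V(G_i)\}$. This maximum is at least $-1$, because $U=\emptyset$ is allowed. \Cref{rem:reg_pdim}(a) gives $\reg R_i/I(G_i)=1+m_i$. Taking $W_i$ to be subsets realizing $m_1$ and $m_2$ produces nonzero homology of $\Delta(G|_W)$ in degree $m_1+m_2+1$. Conversely, any nonzero degree $r$ satisfies $r=p+q+1\le m_1+m_2+1$. Hence the maximum for $G$ is $m_1+m_2+1$, and
\[
\reg R/I(G)=m_1+m_2+2=\reg R_1/I(G_1)+\reg R_2/I(G_2).
\]

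The only delicate point is the bookkeeping of degenerate cases: the empty subset and the convention $\widetilde H_{-1}(\{\emptyset\})=\Bbbk$. The Künneth formula must be applied so that it correctly reproduces $A*\{\emptyset\}=A$. I will check this degree shift explicitly.

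An alternative route avoids topology. Since $R/I(G)\cong R_1/I(G_1)\otimes_\Bbbk R_2/I(G_2)$, the tensor product of the minimal free resolutions over $\Bbbk$ is a minimal free resolution of $R/I(G)$. Its graded Betti numbers are $\beta_{i,j}=\sum \beta_{i_1,j_1}\beta_{i_2,j_2}$, so the values of $j-i$ add and their maxima add. I would mention this as a one-line cross-check.
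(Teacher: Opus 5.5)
The paper gives no proof of this lemma. It is quoted as a well-known result from the literature, so there is no in-text argument to measure yours against. Your proof is correct.

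The join formula $\widetilde H_r(A*B;\Bbbk)\cong\bigoplus_{p+q=r-1}\widetilde H_p(A;\Bbbk)\otimes_\Bbbk\widetilde H_q(B;\Bbbk)$ holds verbatim under the convention $\widetilde H_{-1}(\{\emptyset\};\Bbbk)=\Bbbk$. The reason is that the augmented chain complex of $A*B$ is, up to a shift by one, the tensor product of the augmented chain complexes of $A$ and $B$, with the empty face contributing $\Bbbk$ in degree $-1$. Over a field there are no Tor terms. So the cases $W_1=\emptyset$ or $W_2=\emptyset$ need no separate treatment, and the void complex never arises, since every independence complex contains $\emptyset$. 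Your bookkeeping $m_i\ge -1$ and $\reg R_i/I(G_i)=1+m_i$ is exactly what \Cref{rem:reg_pdim}(a) requires, and the computation of the maximum $m_1+m_2+1$ for $G$ is right.

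The route you relegate to a cross-check, tensoring minimal graded free resolutions over $\Bbbk$, is the standard proof in the literature the paper is invoking. Your Hochster--K\"unneth argument has two advantages. It stays inside the toolkit the paper already sets up (\Cref{rem:reg_pdim} and \Cref{lem:ind-join}). It also identifies which induced subcomplexes carry the top homology, which is the kind of information used in the paper's later Mayer--Vietoris arguments.

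The algebraic route is shorter and gives more. It yields the full convolution formula $\beta_{i,j}(R/I(G))=\sum_{i_1+i_2=i,\ j_1+j_2=j}\beta_{i_1,j_1}(R_1/I(G_1))\,\beta_{i_2,j_2}(R_2/I(G_2))$. This gives additivity of projective dimension as well, and it applies to arbitrary homogeneous ideals in disjoint sets of variables, not just edge ideals.
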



\begin{lem}\label{lem:partial-cone-complex}
Let $H$ be a graph and fix a subset $C\subseteq V(H)$. Let $G$ be the graph obtained from $H$ by
adjoining a new vertex $z$ and connecting $z$ to every vertex in $C$ (and to no other vertices).
Then
\begin{align*}
  \Delta(G) =  \Delta(H)\ \cup\ \bigl(z * \Delta(H|_{V(H)\setminus C})\bigr),\\
\Delta(H)\ \cap\ \bigl(z * \Delta(H|_{V(H)\setminus C})\bigr) =  \Delta(H|_{V(H)\setminus C}).  
\end{align*}

\end{lem}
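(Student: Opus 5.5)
The plan is to verify both equalities directly at the level of faces, using only the definition of the independence complex. Throughout, write $U:=V(H)\setminus C$ and $V(G)=V(H)\sqcup\{z\}$. The one structural fact we need is that, because $z$ is joined precisely to the vertices of $C$, the graph $G$ restricted to $V(H)$ is $H$. So a subset of $V(H)$ is independent in $G$ exactly when it is independent in $H$.

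For the first equality, take a face $X\in\Delta(G)$ and split into two cases.
\begin{enumerate}
\item If $z\notin X$, then $X\subseteq V(H)$ is independent in $H$, so $X\in\Delta(H)$.
\item If $z\in X$, write $X=\{z\}\cup Y$ with $Y\subseteq V(H)$. Independence of $X$ forces $Y\cap C=\emptyset$, since $z$ is adjacent to every vertex of $C$. So $Y\subseteq U$, and $Y$ is independent in $H|_U$. Hence $X\in z*\Delta(H|_U)$.
\end{enumerate}
For the reverse inclusion, faces of $\Delta(H)$ are independent in $G$ by the structural fact above. A face $\{z\}\cup Y$ with $Y\in\Delta(H|_U)$ is also independent in $G$: $Y$ has no internal edges, and $z$ has no neighbors in $U$. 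The faces of the cone not containing $z$ are just the faces $Y\in\Delta(H|_U)$, which are already covered. This gives $\Delta(G)=\Delta(H)\cup\bigl(z*\Delta(H|_U)\bigr)$.

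For the intersection, note that every face of $\Delta(H)$ avoids $z$. So the intersection consists of the faces of $z*\Delta(H|_U)$ that avoid $z$, and these are exactly the faces of $\Delta(H|_U)$. Conversely, $\Delta(H|_U)=\Delta(H)|_U\subseteq\Delta(H)$ by the induced-subcomplex identity from the preliminaries, so $\Delta(H|_U)$ lies in both pieces.

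There is no genuine obstacle here; the argument is bookkeeping. The only point needing care is the empty face and the degenerate case $U=\emptyset$. In that case $\Delta(H|_U)=\{\emptyset\}$ and the cone is the single vertex $\{z\}$, and the same argument goes through with these conventions.
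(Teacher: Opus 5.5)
Your proof is correct and follows the same route as the paper: split the faces of $\Delta(G)$ by whether they contain $z$, observe that a face containing $z$ can only use vertices of $V(H)\setminus C$, and identify the intersection with the faces of the cone that avoid $z$. You write out both inclusions and the degenerate case $V(H)\setminus C=\emptyset$ more explicitly than the paper's two-line argument does.
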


\begin{proof}
An independent set in $G$ either does not contain $z$ (hence is an independent set of $H$), or
contains $z$, in which case it may only use vertices in $V(H)\setminus C$. This gives the stated
union description, and the intersection consists of independent sets in $H$ that avoid $C$.
\end{proof}

\subsection{Discrete Morse matchings}

In this subsection we recall the basic language of discrete Morse theory from \cite{Forman98} and record the specific
consequences we need for independence complexes of paths and cycles.

\begin{defn}[Discrete Morse matching]
Let $X$ be a finite simplicial complex and let $\mathcal{F}(X)$ be its face poset ordered by
inclusion. The Hasse diagram of $\mathcal{F}(X)$ has a vertex for each face and an edge between
$\sigma\subset\tau$ whenever $\dim(\tau)=\dim(\sigma)+1$.

A \emph{matching} $M$ on $\mathcal{F}(X)$ is a set of edges in the Hasse diagram such that each face
of $X$ is incident to at most one edge in $M$. We orient each edge in the Hasse diagram as follows:
\begin{itemize}
  \item if the edge is \emph{unmatched}, we orient it upward, from a face to its coface;
  \item if the edge is \emph{matched}, say $(\sigma,\tau)\in M$ with $\sigma\subset\tau$, we orient
  it downward, from $\tau$ to $\sigma$.
\end{itemize}
The matching $M$ is called \emph{acyclic} (or a \emph{Morse matching}) if the resulting
directed graph contains no directed cycles. A face of $X$ that is incident to no edge of $M$ is
called a \emph{critical} face (or \emph{critical cell}).
\end{defn}

\begin{thm}{\cite[Theorems 7.3 and 8.2]{Forman98}}
\label{thm:forman}
Let $X$ be a finite simplicial complex equipped with an acyclic matching $M$ on its face poset.
Then there exists a CW complex $Y$ that is homotopy equivalent to $X$ with exactly one $p$-cell for
each critical $p$-simplex of $M$.

Moreover, the homology of $X$ is computed by a chain complex $C^{M}_\ast(X)$ whose $p$-th chain
group is the free $\Bbbk$-vector space on the critical $p$-simplices, and whose boundary maps are
defined by signed counts of gradient paths between critical cells.
\end{thm}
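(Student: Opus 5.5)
Since \Cref{thm:forman} is a foundational result that the paper cites rather than proves, I would reconstruct Forman's argument in the simplicial setting. The plan has three steps: turn the acyclic matching into an ordering of faces, build $X$ one step at a time along that ordering, and read off the Morse complex from the resulting CW structure.

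\textbf{Step 1: from acyclicity to an ordering.} Acyclicity of $M$ says the modified Hasse diagram (unmatched edges up, matched edges down) has no directed cycles. So the relation it generates is a partial order on faces. I would choose a linear extension $\sigma_1,\dots,\sigma_N$ of the reverse order, so that each face appears after all faces it points to. I would then check three properties of this list.
\begin{itemize}
\item Every boundary face of $\sigma_k$ appears before $\sigma_k$, except possibly its matched partner.
\item A matched pair $(\sigma,\tau)$ with $\sigma\subset\tau$ can be taken to be consecutive, with $\sigma$ immediately before $\tau$.
\item Hence each initial segment $X_k=\{\sigma_1,\dots,\sigma_k\}$, with a matched pair either wholly included or wholly excluded, is a subcomplex.
\end{itemize}
This is the discrete analogue of the sublevel sets of a Morse function.

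\textbf{Step 2: build $X$ along the filtration.} There are two kinds of step.
\begin{itemize}
\item Adding a matched pair $(\sigma,\tau)$ to the current subcomplex. Here $\sigma$ is a free face of $\tau$: every other coface of $\sigma$ comes later in the order, because unmatched edges point upward. So the step is an elementary expansion, which is the inverse of an elementary collapse, and it does not change the homotopy type.
\item Adding a critical $p$-simplex. Its whole boundary is already present, so the step attaches a single $p$-cell along a map from its boundary sphere.
\end{itemize}
By induction on $k$, each $X_k$ is homotopy equivalent to a CW complex $Y_k$ with one $p$-cell per critical $p$-simplex added so far. To carry this through, I would transport the attaching maps along the accumulated homotopy equivalences, using the fact that attaching a cell along homotopic maps gives homotopy equivalent spaces. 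At $k=N$ this gives the CW complex $Y$ in the first part of \Cref{thm:forman}.

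\textbf{Step 3: the Morse complex.} The cellular chain complex of $Y$ has the free $\Bbbk$-module on the critical $p$-simplices in degree $p$, and it computes $\widetilde H_*(X)$. It remains to identify its boundary maps with signed counts of gradient paths. Following Forman, I would do this in three moves.
\begin{enumerate}
\item Define the discrete gradient flow $\Phi=1+\partial V+V\partial$ on simplicial chains, where $V$ sends $\sigma$ to $\pm\tau$ for each matched pair.
\item Use acyclicity to show that the powers $\Phi^m$ stabilize to some $\Phi^\infty$.
\item Show that $\Phi^\infty$ restricts to a chain isomorphism from the complex of flow-invariant chains onto the complex spanned by critical cells.
\end{enumerate}
Expanding $\partial\,\Phi^\infty$ then writes the boundary coefficient between critical cells as a sum over gradient paths, each weighted by a product of incidence signs.

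I expect the main obstacle to be Step 3: matching the cellular boundary of $Y$ with the gradient-path formula, and in particular keeping track of signs and orientations along the paths. In Step 1, the delicate point is checking that matched pairs can be made adjacent in the linear extension; I would handle it by contracting each matched pair to a single node and noting that acyclicity is exactly the condition that this quotient digraph is still acyclic.
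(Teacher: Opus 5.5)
The paper gives no proof of this statement; it only cites Forman. Your outline is essentially Forman's argument, using the acyclic matching directly in place of a discrete Morse function. Step~2 is fine, but Steps~1 and~3 do not work as written.

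\textbf{Step 1.} Your ordering runs the wrong way. Under the convention you state, a face points to each coface it is not matched with. So a linear extension in which each face comes \emph{after} everything it points to places cofaces before their unmatched faces. Take a single edge $ab$ with $a$ matched to $ab$:
your rule forces the order $a,\,ab,\,b$, and $\{a,ab\}$ is not a subcomplex. The opposite rule forces $b,\,ab,\,a$, which lists $\tau=ab$ before $\sigma=a$, so $\{b,ab\}$ is not a subcomplex either. In general, ordering along the arrows puts $\tau$ before $\sigma$ in every matched pair, and ordering against them puts cofaces before unmatched faces.

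The device in your last paragraph is the correct definition, not a patch. Collapse each matched pair to a single node, so that every surviving arrow goes from a face to a coface. Topologically sort this quotient with faces before cofaces, and add matched pairs as units.

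You also need to justify that the quotient is acyclic, and this is a dimension count. Along a directed cycle of the quotient, each step between nodes raises dimension by one. Passing through a node changes dimension by $0$ or $\pm 1$. There are as many passages as steps and the total change is zero, so every passage must lower dimension by one. That means each node on the cycle is a matched pair traversed through its arrow $\tau\to\sigma$. Hence the cycle lifts to a directed cycle of the modified Hasse diagram, contradicting acyclicity of $M$.

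\textbf{Step 3.} You set out to identify the cellular boundary of $Y$ with the gradient-path formula and call this the main obstacle. Your three moves do not do that, and the theorem does not ask for it. The flow argument proves the second assertion on its own, with no reference to $Y$. Since $\Phi-1=\partial V+V\partial$, the maps $\Phi$ and $\Phi^\infty$ commute with $\partial$ and are chain homotopic to the identity. Therefore $\Phi^\infty\colon C_*(X)\to C^\Phi_*$ and the inclusion of the flow-invariant chains $C^\Phi_*$ are mutually inverse chain homotopy equivalences.

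Your move~(3) is reversed: $\Phi^\infty$ is the identity on flow-invariant chains. What is true is that $\Phi^\infty$, restricted to the span of the critical cells, is an isomorphism onto $C^\Phi_*$, with inverse the projection $\pi$ onto critical coordinates. The two facts needed are:
\begin{enumerate}
\item for critical $\sigma$, the chain $\Phi^\infty(\sigma)$ is $\sigma$ plus a combination of upper cells of matched pairs;
\item a flow-invariant chain with no critical coordinates is zero.
\end{enumerate}
Transporting $\partial$ then gives the Morse boundary $\pi\,\partial\,\Phi^\infty$ on critical chains, and expanding $\Phi^\infty$ yields the signed gradient-path count. For the required cancellations you must take $V(\sigma)=-\langle\partial\tau,\sigma\rangle\,\tau$, not an arbitrary $\pm\tau$. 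Whether this complex coincides with the cellular chain complex of $Y$ is a separate and harder question that you can drop.
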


We  also use that acyclic matchings restrict to subcomplexes.

\begin{lem}
\label{lem:restriction-matching}
Let $Y\subseteq X$ be a subcomplex and let $M$ be an acyclic matching on $X$. Consider the matching
$M|_Y$ obtained by restricting $M$ to those matched pairs whose two faces both lie in $Y$. Then
$M|_Y$ is an acyclic matching on~$Y$.

In particular, every face of $Y$ that is critical for $M$ is also critical for $M|_Y$.
\end{lem}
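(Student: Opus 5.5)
We need to show two things: $M|_Y$ is a matching on the face poset of $Y$, and it has no directed cycles in the modified Hasse diagram of $\mathcal F(Y)$. The plan is to compare that diagram directly with the modified Hasse diagram of $\mathcal F(X)$.

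\emph{Matching.} First observe that $\mathcal F(Y)$ is a subposet of $\mathcal F(X)$ and is closed under taking faces. Hence the Hasse diagram of $\mathcal F(Y)$ is exactly the induced subgraph of the Hasse diagram of $\mathcal F(X)$ on the faces of $Y$: a cover relation $\sigma\subset\tau$ with $\dim\tau=\dim\sigma+1$ depends only on $\sigma$ and $\tau$. Every pair in $M|_Y$ is an edge of $M$ with both endpoints in $Y$, so it is an edge of this Hasse diagram. Since $M|_Y\subseteq M$, each face of $Y$ is incident to at most one edge of $M|_Y$, and $M|_Y$ is a matching.

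\emph{Acyclicity.} Suppose the oriented Hasse diagram of $Y$ under $M|_Y$ contained a directed cycle. I claim each of its arrows is also an arrow, with the same orientation, in the oriented Hasse diagram of $X$ under $M$. There are two cases.
\begin{itemize}
\item A matched edge of $M|_Y$ lies in $M$ and is oriented downward in both diagrams.
\item An unmatched edge $\sigma\subset\tau$ of $\mathcal F(Y)$ is oriented upward in $Y$. This edge cannot belong to $M$, because if it did, both endpoints would lie in $Y$ and it would belong to $M|_Y$. So it is also unmatched in $X$ and oriented upward there.
\end{itemize}
Thus the cycle is a directed cycle for $M$ on $X$, contradicting the acyclicity of $M$.

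\emph{Critical faces.} Let $\sigma\in Y$ be critical for $M$, meaning $\sigma$ lies on no edge of $M$. Since $M|_Y\subseteq M$, it lies on no edge of $M|_Y$, so it is critical for $M|_Y$.

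The only point requiring care is the unmatched-edge case. Restricting could in principle turn a matched edge into an unmatched one and reverse its orientation. This cannot happen, precisely because $M|_Y$ keeps every $M$-pair whose two faces both lie in $Y$. The converse fails: a face of $Y$ matched in $M$ to a face outside $Y$ becomes critical in $M|_Y$. The lemma claims only one direction, so this does not matter.
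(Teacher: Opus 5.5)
Your proof is correct and follows the same approach as the paper: $M|_Y \subseteq M$ is a matching, and any directed cycle for $M|_Y$ on $Y$ is also a directed cycle for $M$ on $X$. You additionally spell out the step the paper leaves implicit, namely that an edge of $\mathcal F(Y)$ left unmatched by $M|_Y$ is also unmatched in $M$, so the orientations agree and the cycle really does lift.
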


\begin{proof}
The restriction $M|_Y$ is clearly a matching on the face poset of $Y$. Any directed cycle in the
oriented Hasse diagram of $Y$ would also be a directed cycle in the oriented Hasse diagram of $X$,
contradicting the acyclicity of $M$. Thus $M|_Y$ is acyclic.
\end{proof}

\subsection{Independence polynomials of graphs}

\begin{defn}\label{def:indpoly}
The \emph{independence polynomial} of a graph $G$ is
\[
P_G(x) := \sum_{i=0}^{\alpha(G)} g_i x^i,
\]
where $g_i$ is the number of independent sets of cardinality $i$ in $G$. (In particular, $g_0=1$
counts the empty independent set.)
\end{defn}

\begin{remark}
The polynomial $P_G(x)$ from \Cref{def:indpoly} appears in the literature under other names.
Prodinger--Tichy in \cite{ProdingerTichy1982} define the \emph{Fibonacci number} of a graph to be the
total number of independent sets; in our notation this is $P_G(1)$. Building on this work, Hopkins--Staton in \cite{Fibonacci} define the \emph{Fibonacci polynomial} of $G$ and prove that
it expands as $\sum_{k\ge 0} F_k(G)x^k$, where $F_k(G)$ counts $k$-element independent sets of $G$
\cite[Proposition~4]{Fibonacci}. Comparing coefficients shows that their Fibonacci polynomial is
exactly our independence polynomial $P_G(x)$ (with $F_k(G)=g_k$). Finally, the term \emph{independence
polynomial} is commonly attributed to the work of Gutman--Harary  in \cite{GutmanHarary1983}.
\end{remark}

\begin{remark}
Some common independence polynomials that we reference throughout the paper include:
\begin{itemize}
\item the independence polynomial of a single vertex: $1+x$;
\item the independence polynomial of $K_2$ (a single edge): $P_{K_2}(x)=1+2x$.
\end{itemize}
\end{remark}

\begin{defn}
Let $v\in V$. The \emph{open neighborhood} of $v$ is
\[
N(v):=\{u\in V : \{u,v\}\in E\},
\]
and the \emph{closed neighborhood} is $N[v]:=N(v)\cup\{v\}$.
\end{defn}

The following is a standard recurrence for independence polynomials.

\begin{lem}\label{lem:delete_contract}
Let $G$ be a finite simple graph and let $v\in V(G)$. Then
\[
P_G(x) =  P_{G-v}(x)\ +\ x P_{G-N[v]}(x).
\]
\end{lem}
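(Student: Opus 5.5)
We prove the recurrence $P_G(x)=P_{G-v}(x)+xP_{G-N[v]}(x)$ of \Cref{lem:delete_contract} by splitting the independent sets of $G$ according to whether or not they contain $v$, and counting each class by size.

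\textbf{Sets avoiding $v$.} An independent set $X\subseteq V$ with $v\notin X$ is the same thing as an independent set of the induced subgraph $G-v$. Independence is a condition only on pairs of vertices of $X$, and these pairs have the same adjacency in $G$ and in $G-v$. Hence the number of such sets of size $i$ equals the $i$-th coefficient of $P_{G-v}(x)$.

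\textbf{Sets containing $v$.} If $X$ is independent with $v\in X$, then $X$ contains no neighbor of $v$. So $X\setminus\{v\}\subseteq V\setminus N[v]$, and $X\setminus\{v\}$ is independent in $G-N[v]$. Conversely, let $Y$ be any independent set of $G-N[v]$. Then $Y\cup\{v\}$ is independent in $G$, because $v$ is adjacent to no vertex of $Y$ and $Y$ is itself independent. The maps $X\mapsto X\setminus\{v\}$ and $Y\mapsto Y\cup\{v\}$ are inverse bijections and shift cardinality by exactly one. Therefore the generating function of this class is $xP_{G-N[v]}(x)$.

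\textbf{Combining the two classes.} The two classes are disjoint and together exhaust all independent sets of $G$. Summing the two generating functions coefficientwise gives $P_G(x)=P_{G-v}(x)+xP_{G-N[v]}(x)$. The empty set is counted once, in the first class, which is consistent with $g_0=1$.

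There is no real obstacle here. The only point that needs care is checking that $Y\cup\{v\}$ is independent, and this uses exactly that $Y$ avoids $N(v)$.
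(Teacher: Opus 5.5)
Your proof is correct: splitting the independent sets of $G$ by whether they contain $v$, and using the size-shifting bijection $X\mapsto X\setminus\{v\}$ onto the independent sets of $G-N[v]$, is the standard argument for this recurrence. The paper states the lemma as standard and gives no proof, but it uses the same ``contains $z$ or not'' split in \Cref{lem:partial-cone-complex} and in the proof of \Cref{prop:ainv-min-vertex-cover}, so your argument matches the reasoning the paper takes for granted.
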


Recall that we denote the path on $n$ vertices by $P_n$ and the cycle on $n$ vertices by $C_n$.

\begin{remark}\label{rem:ind_poly_path_cycle}
Applying \Cref{lem:delete_contract} to paths and cycles yields the recurrences
\begin{enumerate}
\item[(a)] $P_{P_n}(x)=P_{P_{n-1}}(x)+x P_{P_{n-2}}(x)$,
\item[(b)] $P_{C_n}(x)=P_{P_{n-1}}(x)+x P_{P_{n-3}}(x)$.
\end{enumerate}
\end{remark}

The following closed forms are obtained by solving the linear recurrence in
\Cref{rem:ind_poly_path_cycle}(a) and substituting
into \Cref{rem:ind_poly_path_cycle}(b).

\begin{lem}\label{rem:closed_ind_path_cycle}
The independence polynomials of $P_n$ and $C_n$ admit the closed forms
\begin{align*}
P_{P_n}(x)
&=\frac{1}{2^{n+1} s}\Big((1+2x+s)(1+s)^n+(s-1-2x)(1-s)^n\Big),\\
P_{C_n}(x)
&=\frac{1}{2^{n}}\Big((1+s)^n+(1-s)^n\Big),
\end{align*}
where $s=\sqrt{1+4x}$.
\end{lem}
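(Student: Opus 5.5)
This is a linear-recurrence computation: solve the recurrence for $P_{P_n}$, then feed the result into the cycle recurrence.

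\textbf{Path polynomial.} By \Cref{rem:ind_poly_path_cycle}(a), the sequence $a_n:=P_{P_n}(x)$ satisfies $a_n=a_{n-1}+x\,a_{n-2}$. Its characteristic equation is $t^2=t+x$, with roots
\[
\lambda_\pm=\frac{1\pm s}{2},\qquad s=\sqrt{1+4x}.
\]
So $a_n=A\lambda_+^n+B\lambda_-^n$ for constants $A,B$ depending on $x$.

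The initial values are $a_0=1$ (empty graph) and $a_1=1+x$. For convenience one can also use $a_{-1}=1$, which is consistent with the recurrence since $a_1=a_0+x\,a_{-1}$.

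Solving $A+B=1$ and $A\lambda_++B\lambda_-=1+x$ gives
\[
A=\frac{1+x-\lambda_-}{s},\qquad B=\frac{\lambda_+-1-x}{s}.
\]
Since $\lambda_\pm=(1\pm s)/2$, we have $1+x-\lambda_\mp=\frac{1+2x\pm s}{2}$. Hence
\[
A=\frac{1+2x+s}{2s},\qquad B=\frac{s-1-2x}{2s}.
\]
Substituting $\lambda_\pm^n=(1\pm s)^n/2^n$ yields exactly the stated closed form for $P_{P_n}$.

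As a sanity check, at $n=0$ the formula gives $(1+2x+s+s-1-2x)/(2s)=1$. At $n=1$ it gives $\bigl((1+2x+s)(1+s)+(s-1-2x)(1-s)\bigr)/(4s)$. The numerator expands to $2s+4xs+2s^2-2s$, so the value is $(4xs+2s^2)/(4s)=x+s/2$. This does not match $1+x$, so I have an algebra slip somewhere. Re-solving the system carefully is the step to check.

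It is cleaner to observe that $a_n=\frac{\lambda_+^{n+2}-\lambda_-^{n+2}}{\lambda_+-\lambda_-}$, i.e.\ a Fibonacci-type sequence shifted so that $a_{-2}=0$ and $a_{-1}=1$. Then one verifies that
\[
\lambda_\pm^2=\lambda_\pm+x=\frac{1+2x\pm s}{2}
\]
matches the stated coefficients after clearing the $2^{n+1}$. Recomputing $n=1$ this way: $\lambda_+^3-\lambda_-^3=(\lambda_+-\lambda_-)(\lambda_+^2+\lambda_+\lambda_-+\lambda_-^2)$. Using $\lambda_+\lambda_-=-x$ and $\lambda_+^2+\lambda_-^2=1+2x$, this equals $s(1+x)$. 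So $a_1=1+x$, which is correct.

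The remaining task is to match this against the stated expression. The stated form rewrites as $a_n=\frac{\lambda_+^2\lambda_+^n-\lambda_-^2\lambda_-^n}{s}$. This agrees with the Fibonacci-type formula, and the $2^{n+1}$ absorbs the $2^n$ from $\lambda_\pm^n$ and the $2$ from $\lambda_\pm^2$.

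\textbf{Cycle polynomial.} By \Cref{rem:ind_poly_path_cycle}(b), $P_{C_n}=a_{n-1}+x\,a_{n-3}$. With $a_m=(\lambda_+^{m+2}-\lambda_-^{m+2})/s$ this becomes
\[
P_{C_n}=\frac{\lambda_+^{n+1}-\lambda_-^{n+1}+x(\lambda_+^{n-1}-\lambda_-^{n-1})}{s}.
\]
Now use $x=-\lambda_+\lambda_-$. Then $x\lambda_+^{n-1}=-\lambda_-\lambda_+^n$ and $x\lambda_-^{n-1}=-\lambda_+\lambda_-^n$. Collecting terms gives
\[
\frac{\lambda_+^n(\lambda_+-\lambda_-)+\lambda_-^n(\lambda_+-\lambda_-)}{s}=\lambda_+^n+\lambda_-^n=\frac{(1+s)^n+(1-s)^n}{2^n},
\]
which is the stated formula for $P_{C_n}$.

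This formula is valid for $n\ge 3$, where $a_{n-3}$ is defined with $a_0=1$. The identity also extends formally to smaller $n$.

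The main obstacle is bookkeeping: getting the initial conditions and normalizing constants right. Indexing via $a_m=(\lambda_+^{m+2}-\lambda_-^{m+2})/s$ avoids the error encountered in the first attempt.
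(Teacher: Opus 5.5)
Your argument is correct and follows the route the paper indicates: solve $a_n=a_{n-1}+x\,a_{n-2}$ using the roots $\lambda_\pm=(1\pm s)/2$, then substitute into $P_{C_n}=a_{n-1}+x\,a_{n-3}$ and simplify with $\lambda_+\lambda_-=-x$. Your first computation of $A,B$ was already right, and the ``slip'' was in the $n=1$ check itself, since $(1+2x+s)(1+s)+(s-1-2x)(1-s)=4s(1+x)$; so the detour through $a_n=(\lambda_+^{n+2}-\lambda_-^{n+2})/s$ only confirms the first derivation, and the write-up should drop the claimed mismatch.
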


The next invariant of $P_G(x)$ (or namely, $G$) was introduced in \cite{biermann2026realizableregdeghpairs} in the
context of the degree of the $h$-polynomial. It will be useful in our $\mathfrak a$-invariant computations.

\begin{defn}\label{def:MG}
Let $G$ be a graph with independence polynomial $P_G(x)$. Define
\[
M(G) := \ord_{x=-1} P_G(x),
\]
the multiplicity of $x=-1$ as a root of $P_G(x)$.
\end{defn}

\begin{remark}\label{rem:deg-h}
It was shown in \cite[Theorem 4.4]{biermann2026realizableregdeghpairs} that
\[
\deg h_{R/I(G)}(t)=  \alpha(G)-M(G).
\]
Moreover, it also follows from \cite[Theorem 3.2]{biermann2024degree} and \cite[Remark 2.10]{biermann2026realizableregdeghpairs} that
$M(G)=0$ if and only if $P_G(-1)\neq 0$.
\end{remark}

We  use the following Hilbert-series convention for the $\mathfrak a$-invariant: for a standard
graded $\Bbbk$-algebra $A$ of dimension $d$, write its Hilbert series as
\[
\Hilb_A(t) =  \frac{h_A(t)}{(1-t)^d},
\]
where $h_A(t)$ is a polynomial, and set $\mathfrak a(A):=\deg h_A(t)-d$. 

\begin{remark}\label{rem:a_M}
Recall from \cite[Remark 4.5]{biermann2026realizableregdeghpairs} that $\mathfrak a(R/I(G)) =  -M(G)$.
\end{remark}

\section{Suspension operations from the literature}\label{sec:coning-operations}

In this section we recall two closely related suspension constructions on graphs from the literature (see \cite{herzog2011monomial,IndMatch}). After summarizing the relevant known results, we prove our main result of this section: suspension over a \emph{minimal} vertex cover increases the projective dimension by one. 

Let $G=(V,E)$ be a graph and let $\mathcal C\subseteq V$. We write $G(\mathcal C)$ for the graph obtained from $G$ by adjoining a new vertex $z$ and joining $z$ to every vertex in $\mathcal C$ (and to no other vertices). The two suspensions of interest are:
\begin{itemize}
\item the \emph{full suspension} $\widehat{G}=G(V)$, in which $z$ is adjacent to every vertex of $G$;
\item the \emph{cover suspension} $G(\mathcal C)$, where $\mathcal C$ is a minimal vertex cover of $G$.
\end{itemize}
Throughout, we take $R=\Bbbk[x_1,\dots,x_n]$ with $V=\{x_1,\dots,x_n\}$ and set $S:=R[z]$.

\subsection{Full suspension}

The edge ideal of the full suspension $ \widehat{G}$ of $G$ is
\[
I(\widehat{G}) \;=\; I(G) + (zx_i : x_i\in V)\ \subseteq\ S.
\]
We first recall the following statement  relating the graded Betti numbers of $I(G)$ and $I(\widehat{G})$ from the literature  (see \cite[Lemma 3.1]{MousivandProductGraphs}).

\begin{thm}\label{thm:full-cone-betti}
For all $i,j\ge 0$ we have:
\begin{enumerate}
\item[(1)] If $j>i+1$, then
\[
\beta_{i,j}\bigl(S/I(\widehat{G})\bigr)
\;=\;
\beta_{i,j}\bigl(R/I(G)\bigr)+\beta_{i-1,j-1}\bigl(R/I(G)\bigr).
\]
\item[(2)] If $j=i+1$ and $i\ge 1$, then
\[
\beta_{i,i+1}\bigl(S/I(\widehat{G})\bigr)
\;=\;
\beta_{i,i+1}\bigl(R/I(G)\bigr)+\beta_{i-1,i}\bigl(R/I(G)\bigr)+\binom{n}{i},
\]
where $n=|V(G)|$. In particular, $\beta_{0,1}(S/I(\widehat{G}))=0$.
\end{enumerate}
\end{thm}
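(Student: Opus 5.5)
We argue through Hochster's formula (\Cref{rem:Hochster}), computing induced subcomplexes of $\Delta(\widehat{G})$ on subsets $W\subseteq V\cup\{z\}$ and splitting according to whether $z\in W$. If $z\notin W$, then $\Delta(\widehat G)|_W=\Delta(G|_W)$. Summing over $|W|=j$, these subsets contribute exactly $\beta_{i,j}(R/I(G))$ in every degree.

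If $z\in W$, write $W=U\cup\{z\}$ with $U\subseteq V$. Since $z$ is adjacent to every vertex of $G$, \Cref{lem:partial-cone-complex} with $C=V$ shows that the only independent set containing $z$ is $\{z\}$ itself. Hence
\[
\Delta(\widehat G)|_W=\Delta(G|_U)\sqcup\{z\},
\]
a disjoint union with an isolated point.

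For $U\neq\emptyset$, reduced homology of a disjoint union with a point satisfies
\[
\widetilde H_r(\Delta(G|_U)\sqcup\{z\})\cong \widetilde H_r(\Delta(G|_U)) \quad\text{for } r\ge 1,
\qquad
\widetilde H_0(\Delta(G|_U)\sqcup\{z\})\cong \widetilde H_0(\Delta(G|_U))\oplus\Bbbk.
\]
The second isomorphism uses that $\Delta(G|_U)$ is nonempty, so adding a component raises $\widetilde H_0$ by one. For $U=\emptyset$ the complex is the single point $\{z\}$, which is acyclic.

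Now take $|W|=j$, so $|U|=j-1$, and homological degree $r=j-i-1$. Note that $\widetilde H_{r}(\Delta(G|_U))$ with $|U|=j-1$ and $r=(j-1)-(i-1)-1$ is precisely the term counted by $\beta_{i-1,j-1}(R/I(G))$.

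If $j>i+1$, then $r\ge 1$. The subsets containing $z$ therefore contribute $\beta_{i-1,j-1}(R/I(G))$, and together with the first case this gives (1).

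If $j=i+1$, then $r=0$. Each nonempty $U$ with $|U|=i$ contributes $\dim\widetilde H_0(\Delta(G|_U))+1$. Summing over all such $U$ gives $\beta_{i-1,i}(R/I(G))+\binom{n}{i}$ whenever $i\ge 1$, which proves (2). The case $i=0$, $j=1$ is immediate: $\beta_{0,1}$ of any quotient by an ideal generated in degree $2$ vanishes. This is consistent with the formula, since $U=\emptyset$ contributes nothing.

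The only delicate step is the bookkeeping for $r=0$, where the extra component produces the binomial term, and excluding $U=\emptyset$, where $\{z\}$ has no reduced homology. Everything else follows directly from Hochster's formula.
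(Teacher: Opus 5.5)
Your proof is correct. The paper gives no argument of its own here; it imports the statement from \cite[Lemma~3.1]{MousivandProductGraphs}. Your Hochster-formula computation is therefore a genuinely self-contained alternative, built only from tools already set up in \Cref{sec:preliminaries}: \Cref{rem:Hochster}, \Cref{lem:partial-cone-complex} with $C=V$, and the decomposition $\Delta(\widehat G)=\Delta(G)\sqcup\{z\}$, which the paper itself uses later in \Cref{rem:full-cone-hilb}.

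The citation buys brevity. Your route buys transparency: it shows exactly where each term comes from. The term $\beta_{i,j}(R/I(G))$ comes from subsets $W$ avoiding $z$, the term $\beta_{i-1,j-1}(R/I(G))$ comes from $W=U\cup\{z\}$, and $\binom{n}{i}$ comes from the extra component that $z$ adds to $\widetilde H_0$ when $U\neq\emptyset$.

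Two small bookkeeping remarks. First, in (1) with $i=0$, the term $\beta_{-1,j-1}$ must be read as $0$. This matches your Hochster sum, since a complex on $j-1$ vertices has no $\widetilde H_{j-1}$. Second, at $i=0$ formula (2) would give $\binom{n}{0}=1$ rather than $0$. So your observation that $U=\emptyset$ contributes nothing is not a consistency check with (2); it is precisely the reason (2) requires $i\ge 1$.
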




Thus one has the following result relating the regularity and projective dimension of $I(\widehat{G})$ to that of $I(G)$. 
\begin{cor}\label{cor:full-cone-reg-pdim}
We have $\reg S/I(\widehat{G})=\reg R/I(G)$ and  $\pdim S/I(\widehat{G}) = n$.
\end{cor}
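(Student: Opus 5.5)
Both equalities will be read off from \Cref{thm:full-cone-betti} together with the definitions of regularity and projective dimension as maxima over the Betti table. Write $r=\reg R/I(G)$ and assume $G$ has at least one edge, so $r\ge 1$. The edgeless case will be handled separately: there $I(\widehat G)$ is the edge ideal of a star and $\reg=1$, which differs from $\reg R/I(G)=0$. I will note that the statement implicitly needs $E(G)\neq\emptyset$ for the regularity part.

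\textbf{Regularity.} Consider a nonzero $\beta_{i,j}(S/I(\widehat G))$ with $i\ge 1$. If $j>i+1$, part (1) of \Cref{thm:full-cone-betti} shows that either $\beta_{i,j}(R/I(G))\neq0$ or $\beta_{i-1,j-1}(R/I(G))\neq 0$. In both cases $j-i$ is attained in the Betti table of $R/I(G)$, so $j-i\le r$. If $j=i+1$, then $j-i=1\le r$. This gives $\reg S/I(\widehat G)\le r$.

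For the reverse inequality, choose $(i,j)$ with $\beta_{i,j}(R/I(G))\neq 0$ and $j-i=r$. If $r\ge2$, part (1) gives $\beta_{i,j}(S/I(\widehat G))\ge \beta_{i,j}(R/I(G))>0$. If $r=1$, part (2) gives the same conclusion at $(1,2)$, or simply $\beta_{1,2}(S/I(\widehat G))\ge\binom n1>0$. Hence $\reg S/I(\widehat G)=r$.

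\textbf{Projective dimension.} Since $S$ has $n+1$ variables and $S/I(\widehat G)$ has depth at least $1$, we expect $\pdim S/I(\widehat G)\le n$. A direct argument avoids depth. By part (2), $\beta_{n,n+1}(S/I(\widehat G))\ge\binom nn=1$, so $\pdim\ge n$. For the upper bound, a nonzero $\beta_{i,j}(S/I(\widehat G))$ with $j>i+1$ forces $\beta_{i,j}(R/I(G))\ne0$ or $\beta_{i-1,j-1}(R/I(G))\ne0$. This gives $i\le \pdim R/I(G)+1\le n+1$, which is not sharp enough. I will instead use Hochster's formula (\Cref{rem:reg_pdim}(b)) on $\Delta(\widehat G)=\Delta(G)\sqcup\{z\}$.

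If $W\subseteq V\cup\{z\}$ equals $V\cup\{z\}$, then $\Delta|_W$ is $\Delta(G)$ plus an isolated point. Nonzero $\widetilde H_r$ with $r\ge0$ then gives $|W|-1-r\le n$. If $W$ misses some vertex, then $|W|\le n$ and $|W|-1-r\le n$ whenever $r\ge -1$. Hence $\pdim\le n$, completing the argument.

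The only delicate point is the bookkeeping at the bottom of the Betti table ($r=1$ and the $i=0$ edge cases). Everything else is immediate from \Cref{thm:full-cone-betti} and \Cref{rem:reg_pdim}.
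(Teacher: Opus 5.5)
Your proof is correct and follows essentially the paper's route: the paper simply reads the corollary off \Cref{thm:full-cone-betti}, and your Hochster-formula bound $\pdim S/I(\widehat G)\le n$ via $\Delta(\widehat G)=\Delta(G)\sqcup\{z\}$ is the same computation that underlies that theorem. (The Betti-table route you abandoned also closes once you note $\pdim R/I(G)\le n-1$, since a squarefree monomial ideal other than $(x_1,\dots,x_n)$ has positive depth.) Your caveat is correct and worth recording: if $E(G)=\emptyset$ then $\widehat G$ is a star, $\beta_{1,2}(S/I(\widehat G))=n>0$, and $\reg S/I(\widehat G)=1\neq 0=\reg R/I(G)$, so the regularity equality tacitly requires $G$ to have at least one edge.
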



One can also relate the Hilbert series of $I(G)$ and $I(\widehat{G})$.

\begin{remark}\label{rem:full-cone-hilb}
As noted above, $\Delta(\widehat{G})=\Delta(G)\cup\{z\}$ with disjoint vertex sets. For simplicial complexes $\Delta$, $\Delta'$
on disjoint vertex sets one has
\[
\Hilb_{\Bbbk[\Delta\cup\Delta']}(t)=\Hilb_{\Bbbk[\Delta]}(t)+\Hilb_{\Bbbk[\Delta']}(t)-1
\]
(see, e.g., \cite[Proposition~4.9]{MousivandProductGraphs}).
Applying this with $\Delta'=\{z\}$ gives
\[
\Hilb_{S/I(\widehat{G})}(t)=\Hilb_{R/I(G)}(t)+\frac{t}{1-t} =\frac{h_{R/I(G)}(t)+t(1-t)^{d-1}}{(1-t)^d}
\]
where $\Hilb_{R/I(G)}(t)=\dfrac{h_{R/I(G)}(t)}{(1-t)^d}$ with $d=\dim(R/I(G))=\alpha(G)$.
\end{remark}

Lastly, we discuss the relationship between the $\mathfrak a$-invariants  of $I(G)$ and $I(\widehat{G})$. Recall that $\mathfrak{a}(R/I(G))\leq 0$ for any finite simple graph $G$.

\begin{cor}\label{cor:ainv-compare-full-cone}
We have
\[
\mathfrak{a}(S/I(\widehat{G})) \;=\; \deg\!\big(h_{R/I(G)}(t)+t(1-t)^{d-1}\big)\;-\;d.
\]
In particular,
\[
\mathfrak{a}(S/I(\widehat{G}))=
\begin{cases}
0, & \mathfrak{a}(R/I(G))<0,\\[2pt]
0 \text{ or a negative integer}, & \mathfrak{a}(R/I(G))=0.
\end{cases}
\]
Moreover, if $\mathfrak{a}(R/I(G))=0$ and the leading coefficient of $h_{R/I(G)}(t)$ satisfies
\(
[t^d] h_{R/I(G)}(t)\neq (-1)^d,
\)
then $\mathfrak{a}(S/I(\widehat{G}))=0$.
\end{cor}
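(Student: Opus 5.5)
The plan is to read everything off the Hilbert series identity in \Cref{rem:full-cone-hilb}, after checking that the displayed fraction is already the reduced form $h_{S/I(\widehat G)}(t)/(1-t)^{\dim}$ used to define $\mathfrak a$.

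\textbf{Step 1: the displayed fraction is the reduced form.} By \Cref{lem:dim-alpha} applied to $\widehat G$, we have $\dim S/I(\widehat G)=\alpha(\widehat G)$. Every independent set of $\widehat G$ containing $z$ is $\{z\}$, so $\alpha(\widehat G)=\max\{\alpha(G),1\}=\alpha(G)=d$, assuming $G$ has at least one vertex. Hence the denominator $(1-t)^d$ has the correct exponent. To confirm there is no cancellation, evaluate the numerator at $t=1$:
\[
h_{R/I(G)}(1)+1\cdot 0^{d-1}.
\]
When $d\ge 2$ the second term vanishes, and $h_{R/I(G)}(1)=e(R/I(G))>0$. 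When $d=1$ the value is $h_{R/I(G)}(1)+1>0$. Either way the numerator is nonzero at $t=1$, so it is $h_{S/I(\widehat G)}(t)$. The first displayed formula then follows directly from the convention $\mathfrak a=\deg h-\dim$.

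\textbf{Step 2: compare degrees.} The polynomial $t(1-t)^{d-1}$ has degree exactly $d$ with leading coefficient $(-1)^{d-1}$. Since $\mathfrak a(R/I(G))\le 0$, we have $\deg h_{R/I(G)}\le d$.

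If $\mathfrak a(R/I(G))<0$, then $\deg h_{R/I(G)}<d$. The sum therefore has degree exactly $d$, giving $\mathfrak a(S/I(\widehat G))=0$.

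If $\mathfrak a(R/I(G))=0$, write $c=[t^d]h_{R/I(G)}(t)$. The coefficient of $t^d$ in the sum is $c+(-1)^{d-1}$.
\begin{itemize}
\item If $c\neq(-1)^d$, this coefficient is nonzero, so the degree is $d$ and $\mathfrak a(S/I(\widehat G))=0$. This is the ``moreover'' clause.
\item If $c=(-1)^d$, the top term cancels. The degree is then at most $d-1$, so $\mathfrak a(S/I(\widehat G))$ is a negative integer.
\end{itemize}
In both subcases the value is $0$ or a negative integer, as claimed.

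\textbf{Expected obstacle.} The only point needing care is Step 1: confirming that the Krull dimension does not change and that no factor of $(1-t)$ cancels, so that the degree of the numerator really computes $\mathfrak a$. The positivity of $h(1)$, the multiplicity, handles this. The rest is leading-coefficient bookkeeping.
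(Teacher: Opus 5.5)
Your proof is correct and follows the paper's argument: both read $\mathfrak a$ off the Hilbert series identity in \Cref{rem:full-cone-hilb} and compare $t^d$-coefficients, with cancellation exactly when $[t^d]h_{R/I(G)}(t)=(-1)^d$. Your Step 1 spells out $\dim S/I(\widehat G)=\alpha(\widehat G)=d$, which the paper only asserts; the $t=1$ check is automatic once the dimension is known but harmless. Your observation that $\deg h_{R/I(G)}\le d$ also makes the paper's case $\deg h_{R/I(G)}>d$ vacuous.
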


\begin{proof}
This is an immediate degree comparison using \Cref{rem:full-cone-hilb} and $\dim(S/I(\widehat{G}))=d$.
Since $t(1-t)^{d-1}$ has degree $d$, the degree of the numerator is $\deg h_{R/I(G)}(t)$ if $\deg h_{R/I(G)}(t)>d$
and is $d$ if $\deg(h_G)<d$. When $\deg h_{R/I(G)}(t)=d$, cancellation in the $t^d$--term occurs exactly
when $[t^d]h_{R/I(G)}(t)=(-1)^d$, forcing the degree below $d$ and hence $\mathfrak{a}(S/I(\widehat{G}))<0$.
\end{proof}

\subsection{Suspension over minimal vertex covers}

Let $G$ be a finite simple graph on $n$ vertices with no isolated vertices, and let $\mathcal C$ be a
minimal vertex cover of $G$.  Then
\[
I(G(\mathcal C)) \;=\; I(G) + (zx_i : x_i\in \mathcal C)\ \subseteq\ S.
\]
Write $U:=V(G)\setminus \mathcal C$ and $u:=|U|$. Since $\mathcal C$ is a vertex cover, $U$ is
independent.

We note that the suspension $G(\mathcal C)$, where $\mathcal C$ is a vertex cover, is a special case of the $S$--suspension of \cite{IndMatch} (take $S=V(G)\setminus \mathcal C$). Consequently, the regularity preservation in part~(a) holds for \emph{every} vertex cover $\mathcal C$ by \cite[Lemma~1.5]{IndMatch}, and we do not repeat its proof here. By contrast, to the best of our knowledge the corresponding statement on projective dimension in part~(b) has not previously appeared in the literature. We therefore include a proof of~(b), which relies essentially on the minimality of $\mathcal C$. This dependence on minimality also motivates our later study of suspensions over independent sets, which we view as a complementary direction to the vertex cover ($S$-suspension) framework.

\begin{thm}\label{thm:cover-cone-reg-pdim}
We have
\begin{enumerate}
\item[(a)] $\reg S/I(G(\mathcal C)) = \reg R/I(G)$,
\item[(b)] $\pdim S/I(G(\mathcal C)) = \pdim R/I(G)+1$.
\end{enumerate}
\end{thm}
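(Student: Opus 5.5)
The plan is to run the short exact sequence of \Cref{lem:ses} with respect to the suspension vertex $z$. This gives the upper bounds in (a) and (b). The matching lower bounds then come from Hochster's formula (\Cref{rem:reg_pdim}) applied to induced subcomplexes of $\Delta(G(\mathcal C))$.

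Put $J:=I(G(\mathcal C))\subseteq S$. First I compute the two outer terms.
\begin{itemize}
\item Setting $z=0$ gives $(J,z)=I(G)S+(z)$, so $S/(J,z)\cong R/I(G)$.
\item Since $z$ is a nonzerodivisor modulo $I(G)S$, we have $\pdim_S S/(J,z)=\pdim R/I(G)+1$ and $\reg S/(J,z)=\reg R/I(G)$.
\item Next, $(J:z)=I(G)S+(x_i:x_i\in\mathcal C)$. Every edge of $G$ meets $\mathcal C$, so $I(G)\subseteq(\mathcal C)$ and hence $(J:z)=(x_i:x_i\in\mathcal C)S$.
\item Thus $S/(J:z)$ is a polynomial ring in the variables of $U\cup\{z\}$, with $\reg S/(J:z)=0$ and $\pdim S/(J:z)=|\mathcal C|$.
\end{itemize}

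For the upper bounds, \Cref{lem:ses2} applied to $0\to S/(J:z)(-1)\to S/J\to S/(J,z)\to 0$ gives
\[
\pdim S/J\le \max\{|\mathcal C|,\ \pdim R/I(G)+1\},\qquad \reg S/J\le \max\{1,\ \reg R/I(G)\}.
\]
Minimality of $\mathcal C$ enters exactly here. A minimal vertex cover satisfies $|\mathcal C|\le \bight I(G)\le \pdim R/I(G)$ by \Cref{rem:bight_pdim}, so the projective dimension bound becomes $\pdim R/I(G)+1$. For regularity, $G$ has no isolated vertices, so (when nonempty) it has an edge, an induced $K_2$. By \Cref{rem:reg_pdim}(a) this forces $\reg R/I(G)\ge 1$, and the regularity bound becomes $\reg R/I(G)$. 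The matching lower bound for regularity is immediate: $G$ is an induced subgraph of $G(\mathcal C)$, and \Cref{rem:reg_pdim}(a) is monotone under induced subgraphs. This proves (a), which also agrees with \cite[Lemma~1.5]{IndMatch}.

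The main step is the lower bound in (b). Choose $W\subseteq V$ and $r$ with $\widetilde H_r(\Delta(G|_W))\neq 0$ and $|W|-1-r=\pdim R/I(G)$. I will show that $W':=W\cup\{z\}$ still has $\widetilde H_r(\Delta(G(\mathcal C)|_{W'}))\neq 0$, which gives $\pdim S/J\ge |W'|-1-r=\pdim R/I(G)+1$. By \Cref{lem:partial-cone-complex}, $\Delta(G(\mathcal C)|_{W'})=\Delta(G|_W)\cup\bigl(z*\Delta(G|_{W\setminus\mathcal C})\bigr)$, and the two pieces meet in $\Delta(G|_{W\setminus \mathcal C})$. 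Since $W\setminus\mathcal C\subseteq U$ is independent, this intersection is a full simplex. There are two cases.
\begin{itemize}
\item If $W\setminus\mathcal C\neq\emptyset$, we are gluing a cone onto $\Delta(G|_W)$ along a contractible subcomplex. Mayer--Vietoris then shows the reduced homology is unchanged.
\item If $W\subseteq\mathcal C$, the complex is $\Delta(G|_W)\sqcup\{z\}$. For $r\ge 1$ homology is unchanged. For $r=0$ the space remains disconnected, so $\widetilde H_0$ only grows.
\end{itemize}
The degenerate case $W=\emptyset$ ($r=-1$) only occurs when $\pdim R/I(G)=0$, which cannot happen when $G$ has edges. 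Either way the required class survives, giving equality in (b). I expect the only delicate point to be this case analysis on whether $W$ meets $U$, together with checking the $\widetilde H_0$ case.
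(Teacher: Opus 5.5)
Your proposal is correct and follows the paper's proof essentially step for step. It uses the same short exact sequence in $z$ with $(J:z)=(x_i:x_i\in\mathcal C)$, the bound $|\mathcal C|\le\bight I(G)\le\pdim R/I(G)$ for the upper bound in (b), and the same Mayer--Vietoris argument on $W\cup\{z\}$, split by whether $W$ meets $U$, for the lower bound. The only difference is that you derive (a) directly from that sequence (via $\reg S/(J:z)=0$ and $\reg R/I(G)\ge 1$) rather than citing \cite[Lemma~1.5]{IndMatch}, which is a valid self-contained substitute.
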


\begin{proof}
Consider the standard short exact sequence
\[
0 \longrightarrow S/(I(G(\mathcal C)):z)(-1) \xlongrightarrow{\ \cdot z\ } S/I(\widehat{G})
\longrightarrow S/(I(\widehat{G}),z) \longrightarrow 0.
\]
Since $z$ does not appear in $I(G)$, we have $I(G):z=I(G)$, and
\[
(I(G(\mathcal C)):z) \;=\; I(G) + (x_i : x_i\in \mathcal C).
\]
Because $\mathcal C$ is a vertex cover, every generator $x_ix_j$ of $I(G)$ is divisible by some
$x_i\in \mathcal C$. So $I(G)\subseteq (x_i:x_i\in \mathcal C)$ and hence
\[
(I(G(\mathcal C)):z)=(x_i:x_i\in \mathcal C).
\]
Moreover, we have $(I(G(\mathcal C)),z)=(I(G),z)$.
\begin{enumerate}
\item[(a)] This result follows from \cite[Lemma~1.5]{IndMatch}.
\item[(b)] Notice that
\begin{align*}
\pdim S/ (I(G),z) &=\pdim R/I(G)+1,\\
\pdim S/ (I(G(\mathcal C)):z) &= \pdim R/ (x_i : x_i\in \mathcal C) = |\mathcal C|.
\end{align*}
Because $\mathcal C$ is a minimal vertex cover, we have $|\mathcal C|\le \bight I(G) \le \pdim R/I(G)$  by \Cref{rem:bight_pdim}.  Then \Cref{lem:ses2}(a) gives
\[
\pdim S/ I(G(\mathcal C)) \leq \max \{\pdim R/I(G)+1 , |\mathcal C|\}
= \pdim R/I(G)+1.
\]
For the reverse inequality, set $p=\pdim R/I(G)$. Let $\Delta=\Delta(G)$ and
$\Delta'=\Delta(G(\mathcal C))$. By \Cref{rem:reg_pdim}(b), there exists $W\subseteq V(G)$ and $r\ge 0$
such that $\widetilde H_r(\Delta|_W)\neq 0$ and $p=|W|-r-1$. Set $W':=W\cup\{z\}$ and
$K:=U\cap W$, where $U=V(G)\setminus \mathcal C$.

Faces of $\Delta'|_{W'}$ not containing $z$ are exactly the faces of $\Delta|_W$.
Faces containing $z$ are precisely $\{z\}\cup F$ with $F\subseteq K$. So
\[
\Delta'|_{W'}=\Delta|_W  \cup  (z*2^{K})
\qquad\text{and}\qquad
\Delta|_W\cap(z*2^{K})=2^{K}
\]
where $2^K$ is the full simplex on $K$.
Assume first that $K\neq\emptyset$. Since $2^{K}$ is a simplex and $z*2^{K}$ is a cone, both are
contractible. Hence $\widetilde H_i(2^{K})=\widetilde H_i(z*2^{K})=0$ for all $i\ge 0$.
Since the outer terms in the following Mayer--Vietoris sequence vanish
\[
\cdots \longrightarrow
\widetilde H_r(2^{K})
\longrightarrow
\widetilde H_r(\Delta|_W)\oplus \widetilde H_r(z*2^{K})
\longrightarrow
\widetilde H_r(\Delta'|_{W'})
\longrightarrow
\widetilde H_{r-1}(2^{K})
\longrightarrow \cdots 
\]
we have
$\widetilde H_r(\Delta'|_{W'})\cong \widetilde H_r(\Delta|_W)\neq 0$.

If $K=\emptyset$, then the only face containing $z$ is $\{z\}$ and 
$\Delta'|_{W'}=\Delta|_W\sqcup\{z\}$. In this case reduced homology in degrees $\ge 1$ is unchanged,
and for $r=0$ the dimension of $\widetilde H_0$ increases by $1$. In particular, nonvanishing is
preserved. Thus $\widetilde H_r(\Delta'|_{W'})\neq 0$ in all cases. It then follows from \Cref{rem:reg_pdim}(b) that
\[
\pdim S/I(\widehat{G})\ \ge\ |W'|-r-1= p+1=\pdim R/I(G)+1.
\]
Hence $\pdim S/I(\widehat{G})=\pdim R/I(G)+1$. \qedhere
\end{enumerate}
\end{proof}

\begin{prop}\label{prop:ainv-min-vertex-cover}
Recall that $U=V(G)\setminus \mathcal C$ and $u=|U|$.

(a) We have
\[
M(G(\mathcal C))=
\begin{cases}
M(G), & M(G)<u,\\
u, & M(G)>u,\\
\ge u, & M(G)=u.
\end{cases}
\]
Equivalently, $M(G(\mathcal C))=\min\{M(G),u\}$ unless $M(G)=u$, in which case cancellation may increase
the multiplicity. Moreover $M(G(\mathcal C))>u$ holds if and only if $M(G)=u$ and the constant term $q(0)$
below equals $1$.

\smallskip
(b) If $\mathfrak{a}(R/I(G))>-u$ then $\mathfrak{a}(S/I(\widehat{G}))=\mathfrak{a}(R/I(G))$,
while if $\mathfrak{a}(R/I(G))<-u$ then $\mathfrak{a}(S/I(\widehat{G}))=-u$.
\end{prop}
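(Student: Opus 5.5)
The plan is to compute the independence polynomial of $G(\mathcal C)$ explicitly and then read off the order of vanishing at $x=-1$. I apply \Cref{lem:delete_contract} at the suspension vertex $z$. Deleting $z$ leaves $G$. Removing $N[z]=\{z\}\cup\mathcal C$ leaves the induced subgraph $G|_U$, and $U=V(G)\setminus\mathcal C$ is independent because $\mathcal C$ is a vertex cover. Hence $P_{G|_U}(x)=(1+x)^u$, and
\[
P_{G(\mathcal C)}(x)\;=\;P_G(x)+x(1+x)^u .
\]

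For part (a), write $m=M(G)$. By \Cref{def:MG} we can factor $P_G(x)=(1+x)^m p(x)$ with $p(-1)\neq 0$. The second summand $x(1+x)^u$ has order exactly $u$ at $x=-1$, since $x$ does not vanish there. The order of a sum of two polynomials is the minimum of their orders when these differ, which gives $M(G(\mathcal C))=m$ if $m<u$ and $M(G(\mathcal C))=u$ if $m>u$.

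The only delicate case is $m=u$. To make the statement's constant term $q(0)$ precise, I substitute $y=1+x$ and write $P_G=y^u q(y)$ with $q(0)\neq 0$. Then
\[
P_{G(\mathcal C)}=y^u\bigl(q(y)+(y-1)\bigr),
\]
and the bracket has constant term $q(0)-1$. So $M(G(\mathcal C))\ge u$ always holds in this case, with strict inequality exactly when $q(0)=1$. Conversely, if $m\neq u$ the two cases above show $M(G(\mathcal C))\le u$. Together these give the stated characterization of when $M(G(\mathcal C))>u$.

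For part (b), I read $\widehat G$ in the statement as $G(\mathcal C)$. By \Cref{rem:a_M} applied to both $G$ and $G(\mathcal C)$ we have $\mathfrak a=-M$. The hypothesis $\mathfrak a(R/I(G))>-u$ means $M(G)<u$, so part (a) gives $M(G(\mathcal C))=M(G)$ and the $\mathfrak a$-invariants agree. Likewise $\mathfrak a(R/I(G))<-u$ means $M(G)>u$, so $M(G(\mathcal C))=u$ and $\mathfrak a(S/I(G(\mathcal C)))=-u$.

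The main obstacle is only the bookkeeping in the equal-order case, namely fixing the convention for $q$ so that the statement's "$q(0)=1$" becomes a correct criterion. Everything else follows directly from the deletion–contraction identity.
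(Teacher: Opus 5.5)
Your proposal is correct and follows essentially the same route as the paper: you derive the identity $P_{G(\mathcal C)}(x)=P_G(x)+x(1+x)^u$ (the paper gets it by directly sorting independent sets according to whether they contain $z$, you via \Cref{lem:delete_contract} at $z$), then substitute $y=1+x$ with $P_G=y^m q(y)$, compare $m$ with $u$ in three cases, and finish part (b) with $\mathfrak a=-M$. You are also right to read $\widehat G$ in part (b) as $G(\mathcal C)$, which is how the paper's proof uses it.
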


\begin{proof}
Since $U$ is independent, an independent set of $G(\mathcal C)$ either does not contain $z$ (hence is an independent
set of $G$), or contains $z$, in which case it can be completed by an arbitrary subset of $U$. Hence
\[
P_{\widehat{G}}(x)=P_G(x)+x(1+x)^u.
\]
Write $y:=1+x$ and expand $P_G(x)$ near $y=0$. Then $P_G(x)=y^m q(y)$ with $m=M(G)$ and $q(0)\neq 0$, and 
\[
P_{G(\mathcal C)}(x)=y^{m}q(y)+(y-1)y^{u}.
\]
If $m<u$, then $P_{G(\mathcal C)}(x)=y^{m}Q(y)$ where $Q(y)=q(y)+y^{u-m}(y-1)$ and $Q(0)=q(0)\neq 0$. So $M(G(\mathcal C))=m$.
If $m>u$, then $P_{G(\mathcal C)}(x)=y^{u}Q'(y)$ where $Q'(y)=y^{m-u}q(y)+y-1$ and $Q'(0)=-1\neq 0$. So $M(G(\mathcal C))=u$.
If $m=u$, then $P_{G(\mathcal C)}(x)=y^{u}\bigl(q(y)+y-1\bigr)$ has order at least $u$, with strict inequality exactly when
$q(0)=1$. This proves (a).

Statement of (b) follows from  $\mathfrak{a}(R/I(H))=-M(H)$ for $H=G,~G(\mathcal C)$ by \Cref{rem:a_M} and combining it with (a).
\end{proof}

\section{Suspension over maximal independent sets:  cycles}\label{sec:cycles}

In this section we restrict to cycles and investigate their suspensions over maximal independent sets. We denote a cycle on $n$ vertices by $C_n$ and start with the case $n\equiv 0 \pmod 3$, and moreover suspend over a specific class of maximal independent sets. This regime requires a different set of arguments than the general case, and thus merits a separate treatment. The next subsection develops the theory for cycles in full generality.

\subsection{A special suspension case: cycles with wide spokes}\label{subsec:wide_spokes}
In this subsection, we work with $C_n$ when $n\equiv 0 \pmod 3$. We are particularly interested in  suspension of  $C_n$ 
over one of its maximal independent sets with the smallest size  $\lceil \frac{n}{3} \rceil$. Notice that such maximal independent sets are precisely  the following ones:
\begin{align*}
    \mathcal{C}_1 &= \{x_1,x_4,\ldots, x_{3k-2}\},\\
   \mathcal{C}_2  &= \{x_2,x_5,\ldots, x_{3k-1}\},\\
    \mathcal{C}_3  &= \{x_3,x_6,\ldots, x_{3k}\}.
\end{align*}

We follow the notation introduced above for the rest of this subsection. Notice that   any $\mathcal{C}_i$-suspension of  $C_n$ for $i=1,2,3$ are equivalent by symmetry. So, we  focus on $\mathcal{C}_1$-suspension of $C_n$ for the remainder of this section.

\begin{defn}
        Let $G:=C_n(\mathcal{C}_1)$  be $\mathcal{C}_1$-suspension of  $C_n$ such that
 $$I(G)=I(C_n)+ (zx_i : x_i\in \mathcal{C}_1)\subseteq S=R[z].$$ 
 When $n\equiv 0 \pmod 3$, we call the graph $G$ the \emph{$n$ cycle with wide spokes}.
\end{defn}

\begin{figure}[!htb]
    \centering
    \begin{minipage}{.5\textwidth}
        \centering
        \includegraphics[width=0.65\linewidth]{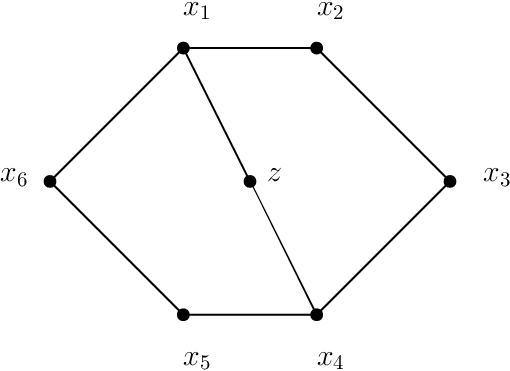}
        \caption{6 cycle with wide spokes}
        \label{fig:6cycle}
    \end{minipage}%
    \begin{minipage}{0.5\textwidth}
        \centering
        \includegraphics[width=0.5\linewidth]{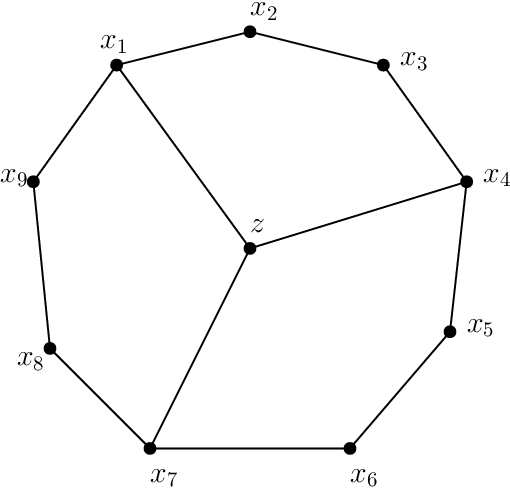}
        \caption{9 cycle with wide spokes}
        \label{fig:9cycle}
    \end{minipage}
\end{figure}

Our goal in this section is to show that regularity is preserved when one passes from $C_n$ to $G$. Before we do this, we first set-up our notation, then we recall and establish some useful lemmas needed for the proof.

\begin{notation}
\label{setup:cycle}
Let $H$ denote the induced subgraph of $C_{3k}$ on $W := V(C_{3k})\setminus \mathcal{C}_1$. The edges of $H$ are $\{x_{3i-1},x_{3i}\}$ for $i=1,\ldots k$. So, $H$ is a disjoint union of $k$ edges.

Consider the independence complexes
\[
  \Delta := \Delta(C_{3k}), \qquad A := \Delta\big(H\big),
\]
and the inclusion of simplicial complexes $i\colon A\hookrightarrow \Delta$.

Let
\[
  \sigma_1 :=  \{2,5,8,\dots,3k-1\},\qquad
  \sigma_2 :=  \{3,6,\dots,3k\}.
\]
Notice  $\sigma_1$ and $\sigma_2$ are facets of $A\subset \Delta$
 since   $\mathcal{C}_2$ and $\mathcal{C}_3$ are maximal independent sets in $H$ and $C_{3k}$.  
 \end{notation}

For the independence complex of the cycle, we use a discrete Morse description due to Kozlov from \cite{KozlovBook,KozlovInd}.

\begin{lem}
\label{lem:Delta-critcells}
In the situation of Setup~\ref{setup:cycle}, we have
\[\Delta  \simeq S^{k-1} \vee S^{k-1}.\]
Moreover, there exists an acyclic matching $M_\Delta$ on the face poset of $\Delta$ such that:
\begin{enumerate}
  \item $M_\Delta$ has no critical simplices of dimension $\ge k$; and
  \item the only critical $(k-1)$-simplices are exactly $\sigma_1$ and $\sigma_2$.
\end{enumerate}
Consequently $\widetilde H_{k-1}(\Delta;\Bbbk)\cong \Bbbk^2$, with basis represented by the classes of $\sigma_1$ and $\sigma_2$.
\end{lem}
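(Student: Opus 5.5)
I will construct an explicit acyclic matching on $\Delta=\Delta(C_{3k})$ by the standard element-matching (Kozlov's) procedure, and read off the homotopy type and homology from \Cref{thm:forman}.

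\textbf{First matching.} Match on vertex $x_1$. Let $M_1$ consist of all pairs $(\tau,\tau\cup\{1\})$ with $1\notin\tau$ and $\tau\cup\{1\}\in\Delta$; these are the faces $\tau$ avoiding $2$ and $3k$. This is a standard element matching and is acyclic. The unmatched faces are exactly those faces $\tau$ with $1\notin\tau$ and $\tau\cap\{2,3k\}\neq\emptyset$, since then $\tau\cup\{1\}\notin\Delta$.

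\textbf{Second matching.} On the unmatched faces I iterate element matchings along the path $x_2,\dots,x_{3k}$, which is $C_{3k}$ with $x_1$ removed. I split the remaining faces into two classes. The first class consists of faces containing $2$. These restrict to independent sets of the path $x_4,\dots,x_{3k}$ (or $x_4,\dots,x_{3k-1}$ if one also requires avoiding $3k$, which is automatic since $2$ and $3k$ are not adjacent for $k\ge2$). The second class consists of faces containing $3k$ but not $2$, which restrict to the path $x_3,\dots,x_{3k-2}$.

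On each class I use the known matching for path independence complexes. For each consecutive triple $x_{3i+1},x_{3i+2},x_{3i+3}$, match on the middle or first vertex, in the spirit of Kozlov's computation $\Delta(P_m)\simeq S^{\lfloor (m-1)/3\rfloor}$ or a point. Traced carefully, each class leaves exactly one critical cell:
\begin{itemize}
\item the class with $2\in\tau$ leaves $\{2,5,\dots,3k-1\}=\sigma_1$;
\item the class with $3k\in\tau$, $2\notin\tau$ leaves $\{3,6,\dots,3k\}=\sigma_2$.
\end{itemize}
Both have dimension $k-1$. Each sequential element matching is acyclic, and a union of such matchings taken in a linear order (a poset fibration / cluster lemma argument, as in \cite{JonssonGraphs,KozlovBook}) is acyclic. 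Hence $M_\Delta$ is acyclic, with no critical cells in dimension $\ge k$ and exactly $\sigma_1,\sigma_2$ critical in dimension $k-1$.

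\textbf{Concluding.} We must also check that there are no other critical cells in lower dimension; the empty face is matched with $\{1\}$. Then \Cref{thm:forman} gives a CW complex with one $0$-cell and two $(k-1)$-cells, hence $\Delta\simeq S^{k-1}\vee S^{k-1}$. The Morse chain complex has zero differentials in the relevant degrees, so $\widetilde H_{k-1}(\Delta;\Bbbk)\cong\Bbbk^2$ with basis given by the classes of $\sigma_1,\sigma_2$. Alternatively, the homotopy type can be cross-checked by the known result $\Delta(C_{3k})\simeq S^{k-1}\vee S^{k-1}$ of Kozlov \cite{KozlovInd}.

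\textbf{Main obstacle.} The hard part is the bookkeeping in the second stage: verifying that the iterated matchings on the two classes leave exactly $\sigma_1$ and $\sigma_2$, and not some other representative such as $\{2,6,\dots\}$. The matching order has to be chosen so that these specific facets survive. If a naive order produces different critical cells, I would reorder the matched vertices (matching on $x_{3i+1}$ first, which lies in neither $\sigma_1$ nor $\sigma_2$) so that faces containing any vertex of $\mathcal C_1$ get matched away first. The survivors are then forced to be independent sets of $H$ of top size, that is, $\sigma_1$ and $\sigma_2$.
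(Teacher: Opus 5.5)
\textbf{Gap: the critical cells are never actually identified.} The content of this lemma is the identification of the critical cells, and your proposal does not establish it. Your claim that the second stage, ``traced carefully'', leaves exactly $\sigma_1$ and $\sigma_2$ depends on the matching order, and under the natural reading it is false. The faces containing $x_{3k}$ but not $x_2$ are $\{x_{3k}\}\cup\tau''$ with $\tau''$ independent in the path $x_3x_4\cdots x_{3k-2}$. The standard path matching started at the endpoint $x_3$ (match on $x_3$, then $x_6$, \dots) leaves $\{x_4,x_7,\dots,x_{3k-2},x_{3k}\}$ critical, not $\sigma_2$. Already for $k=2$ it pairs $\{x_6\}$ with $\{x_3,x_6\}=\sigma_2$ and leaves $\{x_4,x_6\}$ critical. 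That cell does not even lie in $A$, whereas \Cref{prop:inclusion-nonzero} needs both critical cells to lie in $A$. Your fallback has the right idea but the wrong justification. $H$ has $2^k$ independent sets of size $k$ (for $k=2$: $\{x_2,x_5\},\{x_2,x_6\},\{x_3,x_5\},\{x_3,x_6\}$), so ``independent sets of $H$ of top size'' does not single out $\sigma_1,\sigma_2$. You also never rule out smaller surviving faces.

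\textbf{How to close it.} What is missing is an exact description of the survivors. Drop the class split and use a single iterated element matching on $x_1,x_4,\dots,x_{3k-2}$, in this order; acyclicity is then the standard fact for iterated element matchings. Let $U_i$ be the set of faces still unmatched after matching on $v_1,\dots,v_i\in\mathcal C_1$. By induction, $U_i$ consists exactly of those $\tau$ with $\tau\cap\{v_1,\dots,v_i\}=\emptyset$ and $\tau\cap N(v_j)\neq\emptyset$ for all $j\le i$. For the inductive step, a face of $U_{i-1}$ containing $v_i$ is matched with $\tau\setminus\{v_i\}$, which still lies in $U_{i-1}$ because $\mathcal C_1$ is independent, so $v_i\notin N(v_j)$. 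A face of $U_{i-1}$ not containing $v_i$ is matched if and only if $\tau\cap N(v_i)=\emptyset$. Hence the critical faces are the $\tau\subseteq V\setminus\mathcal C_1$ with $\tau\cap\{x_{3i},x_{3i+2}\}\neq\emptyset$ for every $i$ (indices mod $3k$), and the empty face is matched at the first step. Now view $\tau$ as a choice of at most one endpoint of each edge $\{x_{3i-1},x_{3i}\}$ of $H$. The condition says that if edge $i$ does not contribute its right endpoint $x_{3i}$, then edge $i+1$ contributes its left endpoint $x_{3i+2}$, and hence not its right one. Going around the cycle, either every edge contributes its left endpoint ($\tau=\sigma_1$) or every edge contributes its right endpoint ($\tau=\sigma_2$). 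This yields exactly two critical cells, both of dimension $k-1$, and settles all dimensions at once; \Cref{thm:forman} then finishes as you describe. The paper itself does not construct the matching: it cites Kozlov and asserts that, after rotating labels, the critical cells are $\sigma_1,\sigma_2$. This repaired argument would give a self-contained proof of that assertion.
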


\begin{proof}
A discrete Morse matching for $\Delta(C_n)$ is constructed in \cite[Proposition 5.2]{KozlovInd} and \cite[Proposition 11.17]{KozlovBook}. It follows from Kozlov's work that, for $n=3k$, $\Delta(C_n)$  has exactly two critical $(k-1)$-simplices and no critical simplices in higher dimension. It follows that $\Delta(C_{3k})$ is homotopy equivalent to a wedge of two $(k-1)$-spheres as proved in \cite{KozlovBook, KozlovInd}.

By rotating the labeling of the cycle if necessary, the two critical cells produced by Kozlov's matching are precisely the $(k-1)$-faces $\sigma_1$ and $\sigma_2$ from Setup~\ref{setup:cycle}. Denote the corresponding Morse matching by $M_\Delta$. Let $C^{M_\Delta}_\ast(\Delta)$ be the Morse complex associated to $M_\Delta$.  Since  $\widetilde H_{k-1}(\Delta;\Bbbk) \simeq \Bbbk^2$,  Theorem~\ref{thm:forman}  implies that
\[
  \widetilde H_{k-1}(\Delta;\Bbbk)\ \cong\ C^{M_\Delta}_{k-1}(\Delta)
  \ \cong\ \Bbbk\cdot\sigma_1 \ \oplus\ \Bbbk\cdot\sigma_2,
\]
and that the homology classes of $\sigma_1$ and $\sigma_2$ form a basis of $\widetilde H_{k-1}(\Delta;\Bbbk)$.
\end{proof}

We now explain how the inclusion $i\colon A\hookrightarrow\Delta$ interacts with these matchings and the resulting top homology.

\begin{prop}
\label{prop:inclusion-nonzero}
In the situation of Setup~\ref{setup:cycle}, the inclusion
\[
  i\colon A\hookrightarrow\Delta
\]
induces a nonzero homomorphism
\[
  i_{\star}: \widetilde H_{k-1}(A;\Bbbk) \longrightarrow \widetilde H_{k-1}(\Delta;\Bbbk).
\]
In particular, since $\dim_\Bbbk \widetilde H_{k-1}(A;\Bbbk)=1$, the map $i_{\star}$ is injective.
\end{prop}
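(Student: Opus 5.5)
The plan is to detect the fundamental class of $A$ inside $\Delta$ by pairing it with an explicit $(k-1)$-cocycle on $\Delta$, rather than working through the Morse complex directly.

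First, recall from \Cref{lem:ind-join} that $A=\Delta(H)\cong (S^0)^{*k}$ is the boundary of a $k$-dimensional cross-polytope on the vertex pairs $\{3i-1,3i\}$, $i=1,\dots,k$. Hence $\widetilde H_{k-1}(A;\Bbbk)\cong\Bbbk$. A generator is the fundamental cycle
\[
z_A=\sum_{\epsilon\in\{0,1\}^k}\pm\,[\,v_1^{\epsilon_1},\dots,v_k^{\epsilon_k}\,],
\]
where $v_i^{0}=3i-1$ and $v_i^{1}=3i$. Every facet of $A$, in particular $\sigma_2=\{3,6,\dots,3k\}$, appears in $z_A$ with coefficient $\pm1$. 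Since $A$ has dimension $k-1$ and has no $k$-faces, the class of $z_A$ spans $\widetilde H_{k-1}(A)$.

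Next, I would check that $\sigma_2$ is a \emph{maximal} face of $\Delta=\Delta(C_{3k})$. Every vertex outside $\sigma_2$ has the form $3i+1$ or $3i-1$ (indices mod $3k$), and each such vertex is adjacent in $C_{3k}$ to $3i$, which lies in $\sigma_2$. So no $k$-simplex of $\Delta$ contains $\sigma_2$. Let $\varphi\in C^{k-1}(\Delta;\Bbbk)$ be the cochain that takes the value $1$ on the oriented simplex $\sigma_2$ and $0$ on all other $(k-1)$-simplices. For any $k$-simplex $\tau$, $(\delta\varphi)(\tau)=\varphi(\partial\tau)$ is a signed sum of the values of $\varphi$ on the facets of $\tau$. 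None of those facets is $\sigma_2$, because $\sigma_2$ is maximal, so $\delta\varphi=0$ and $\varphi$ is a cocycle.

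Finally, suppose $i_\star[z_A]=0$, that is, $i(z_A)=\partial c$ for some $c\in C_k(\Delta)$. Then
\[
\varphi(i(z_A))=\varphi(\partial c)=(\delta\varphi)(c)=0.
\]
On the other hand, $\varphi(i(z_A))$ is exactly the coefficient of $\sigma_2$ in $z_A$, which is $\pm1\neq0$. This contradiction shows that $i_\star$ is nonzero, and injectivity follows because its domain is one-dimensional. The same argument works with $\sigma_1$ in place of $\sigma_2$, which is consistent with \Cref{lem:Delta-critcells}: the image of $z_A$ has nonzero components along both basis classes.

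The only step needing care is the verification that $\sigma_2$ is a maximal independent set of $C_{3k}$, including the wrap-around vertex $1$, which is adjacent to $3k\in\sigma_2$. The rest is formal, since the cochain–chain pairing descends to homology over any field.
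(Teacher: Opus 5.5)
Your argument is correct, and it takes a genuinely different route from the paper.

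\textbf{The paper's route.} The paper stays inside discrete Morse theory. It restricts Kozlov's matching $M_\Delta$ on $\Delta(C_{3k})$ to $A$; under $M_\Delta$, $\sigma_1,\sigma_2$ are the only critical cells of dimension $\ge k-1$. It then writes a generator of $\widetilde H_{k-1}(A)$ as a Morse cycle $a\sigma_1+b\sigma_2$. Finally it combines $\partial^{M_\Delta}_{k-1}=0$ with the assertion that the inclusion sends each critical cell to itself, and concludes $i_\star[\omega]=[a\sigma_1+b\sigma_2]\neq 0$.

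\textbf{Your route.} You replace all of this with a chain-level observation. The cross-polytope cycle $z_A$ has coefficient $\pm1$ on $\sigma_2$. Since $\sigma_2$ is a maximal face of $\Delta$, no element of $B_{k-1}(\Delta)$ involves $\sigma_2$; your cocycle $\varphi$ just records this.

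\textbf{What your route buys.} It uses nothing from \Cref{lem:Delta-critcells}: not the homotopy type of $\Delta$, not Kozlov's matching, and not the relabeling of its critical cells. It also sidesteps the delicate step of the Morse argument, namely that $\sigma_1,\sigma_2$ are the only critical $(k-1)$-cells of the restricted matching $M_A$. That claim requires every other facet of $A$ to be matched in $M_\Delta$ with a $(k-2)$-face rather than a $k$-face, and this is not automatic.

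For example, take $k=2$ and the element matching that first pairs $\sigma$ with $\sigma\cup\{1\}$ and then pairs the remaining faces via vertex $4$. Its critical cells are exactly $\{2,5\}$ and $\{3,6\}$. Yet it matches $\{3,5\}$ with $\{1,3,5\}$ and $\{2,6\}$ with $\{2,4,6\}$. So its restriction to $A$ is empty, and $M_A$ has four critical $1$-cells.

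The proposition itself is unaffected. The maximality of $\sigma_1,\sigma_2$ in $\Delta$, which you isolate, is exactly what one would use to repair that step. Your argument also applies verbatim to any induced subcomplex $\Delta|_T\supseteq A$, since $\sigma_2$ stays maximal there. This gives injectivity of $j_\star$ in Case~3 of \Cref{thm:cycle_spokes} directly.

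\textbf{What the paper's route buys.} The Morse framework provides an explicit basis of $\widetilde H_{k-1}(\Delta)$ indexed by $\sigma_1,\sigma_2$ in which to write the image. Pairing with the indicator cocycles of $\sigma_1$ and $\sigma_2$ recovers those coordinates as $\pm1$, but as you note, that is only a consistency check and is not needed for the proof.
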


\begin{proof}
Restrict the matching $M_\Delta$ from Lemma~\ref{lem:Delta-critcells} to the subcomplex $A$ to obtain a matching $M_A$ on $A$. By Lemma~\ref{lem:restriction-matching}, $M_A$ is an acyclic matching on $A$. The faces $\sigma_1$ and $\sigma_2$ lie in $A$ and are unmatched in $M_\Delta$. Hence, they are also critical $(k-1)$-simplices for $M_A$. In addition, note that $M_A$ has no critical simplices of dimension $\geq k$ since $M_{\Delta}$ does not have any such critical cells.

Let $C^{M_A}_\ast(A)$ and $C^{M_\Delta}_\ast(\Delta)$ be the Morse complexes associated to $M_A$ and $M_\Delta$, respectively. In degree $k-1$, we have
\[
  C^{M_A}_{k-1}(A) \;\cong\; \Bbbk\cdot\sigma_1 \;\oplus\; \Bbbk\cdot\sigma_2,
\]
because these are exactly the critical $(k-1)$--faces for $M_A$. The Morse differential
\[
  \partial_{k-1}^{M_A}\colon C^{M_A}_{k-1}(A) \longrightarrow C^{M_A}_{k-2}(A)
\]
is some $\Bbbk$--linear map determined by gradient paths. Since $\widetilde H_{k-1}(A;\Bbbk)\cong \Bbbk$ by \Cref{lem:ind-join} and $C^{M_A}_{k}(A)=0$, we have
\[
  \dim_\Bbbk \ker \partial_{k-1}^{M_A} = 1.
\]
So, there exist $a,b\in\Bbbk$ with $(a,b)\neq (0,0)$ such that
\[
  \partial_{k-1}^{M_A}(a\sigma_1 + b\sigma_2) = 0.
\]
Thus, $\ker \partial_{k-1}^{M_A}$ is spanned by $a\sigma_1 + b\sigma_2$. The corresponding homology class
\[
  [\omega] := [ a\sigma_1 + b\sigma_2 ] \in \widetilde H_{k-1}(A;\Bbbk)
\]
is a generator of $\widetilde H_{k-1}(A;\Bbbk)$.

Since $M_\Delta$ has no critical simplices in dimension $\ge k$, we have
$C^{M_\Delta}_{k}( \Delta)=0$. Moreover,
$C^{M_\Delta}_{k-1}(\Delta)\cong \Bbbk\cdot\sigma_1\oplus \Bbbk\cdot\sigma_2$
and $\widetilde H_{k-1}(\Delta;\Bbbk)\cong \Bbbk^2$ by Lemma~\ref{lem:Delta-critcells}.
Therefore $\partial^{M_\Delta}_{k-1}=0$, and the classes of $\sigma_1$ and $\sigma_2$
form a basis of $\widetilde H_{k-1}(\Delta;\Bbbk)$.

The inclusion $i\colon A\hookrightarrow\Delta$ induces a chain map between Morse complexes
\[
  i_{\star}\colon C^{M_A}_\ast(A) \longrightarrow C^{M_\Delta}_\ast(\Delta),
\]
which in degree $k-1$ is  the inclusion
\[
  i_{\star}\colon \Bbbk\cdot\sigma_1 \oplus \Bbbk\cdot\sigma_2
  \hookrightarrow 
  \Bbbk\cdot\sigma_1 \oplus \Bbbk\cdot\sigma_2,
\]
sending $\sigma_j$ to the same simplex viewed in $\Delta$, for $j=1,2$. So,  we obtain
\[
  i_{\star}([\omega]) 
  \;=\; [ a\sigma_1 + b\sigma_2 ] 
  \;\in\; \widetilde H_{k-1}(\Delta;\Bbbk)
  \;\cong\; \Bbbk^2.
\]
Since $[\sigma_1],[\sigma_2]$ is a basis of $\widetilde H_{k-1}(\Delta;\Bbbk)$ and $(a,b)\neq (0,0)$, we have $i_{\star}([\omega])\neq 0$. Therefore, $i_{\star}$ is a nonzero  map on $\widetilde H_{k-1}$ and it is injective.
\end{proof}

\begin{thm}\label{thm:cycle_spokes}
Let $G$ be the $n$ cycle with wide spokes. Then 
$$\reg S/ I(G)= \reg R/I(C_n).$$
\end{thm}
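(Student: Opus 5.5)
Since $\mathcal C_1$ is a maximal independent set of $C_n$, we cannot cite \Cref{thm:cover-cone-reg-pdim}. Instead I would use Hochster's formula in the form of \Cref{rem:reg_pdim}(a) together with \Cref{lem:partial-cone-complex}. Recall $\reg R/I(C_{3k}) = k$: $\Delta(C_{3k})\simeq S^{k-1}\vee S^{k-1}$ by \Cref{lem:Delta-critcells}, and proper induced subgraphs are disjoint unions of paths, whose regularity is at most $k$ (or one can quote the known formula). The lower bound $\reg S/I(G)\ge \reg R/I(C_n)$ is immediate, because $C_n$ is an induced subgraph of $G$ and induced subcomplexes of $\Delta(C_n)$ are induced subcomplexes of $\Delta(G)$.

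For the upper bound, I must show $\widetilde H_r(\Delta(G|_{W'}))=0$ for all $r\ge k$ and all $W'\subseteq V(G)$. If $z\notin W'$, this follows from $\reg R/I(C_n)=k$. So let $W'=W\cup\{z\}$ with $W\subseteq V(C_n)$. By \Cref{lem:partial-cone-complex} applied to $H=C_n|_W$ and $C=\mathcal C_1\cap W$,
\[
\Delta(G|_{W'}) = \Delta(C_n|_W)\cup \bigl(z*\Delta(C_n|_{W\setminus \mathcal C_1})\bigr),
\]
and the intersection is $\Delta(C_n|_{W\setminus\mathcal C_1})$. The cone is contractible, so Mayer--Vietoris gives the exact sequence
\[
\widetilde H_r(\Delta(C_n|_{W\setminus\mathcal C_1}))\to \widetilde H_r(\Delta(C_n|_W))\to \widetilde H_r(\Delta(G|_{W'}))\to \widetilde H_{r-1}(\Delta(C_n|_{W\setminus\mathcal C_1}))\xrightarrow{\;\iota_*\;} \widetilde H_{r-1}(\Delta(C_n|_W)).
\]
For $r\ge k$, the second term vanishes by $\reg R/I(C_n)=k$. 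Thus $\widetilde H_r(\Delta(G|_{W'}))$ injects into $\ker\iota_*$ in degree $r-1$. The graph $C_n|_{W\setminus\mathcal C_1}$ is an induced subgraph of $H$ from Setup~\ref{setup:cycle}, namely a disjoint union of at most $k$ edges and isolated vertices. By \Cref{lem:ind-join}, its independence complex is either a cone (contractible) or a sphere $S^{j-1}$, where $j$ is the number of edges, $j\le k$. Its homology is therefore nonzero only in degree $j-1\le k-1$. Hence for $r\ge k+1$ everything vanishes, and for $r=k$ it remains to treat the case $j=k$: $W\setminus\mathcal C_1=V(H)$, so $W\supseteq V(H)$.

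This case $r=k$, $W\supseteq V(H)$, is the main obstacle. I must show that $\iota_*\colon \widetilde H_{k-1}(A)\to \widetilde H_{k-1}(\Delta(C_n|_W))$ is injective. If $W=V(C_n)$, this is exactly \Cref{prop:inclusion-nonzero}. For general $W$ with $V(H)\subseteq W\subsetneq V(C_n)$, I would factor the inclusion as
\[
A\hookrightarrow \Delta(C_n|_W)\hookrightarrow \Delta.
\]
The composite is injective on $\widetilde H_{k-1}$ by \Cref{prop:inclusion-nonzero}, so the first map must be injective as well. This cleanly yields $\widetilde H_k(\Delta(G|_{W'}))=0$. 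The factoring is what makes the argument work; the real content lies in the Morse-theoretic \Cref{prop:inclusion-nonzero}. Combining both bounds gives $\reg S/I(G)=k=\reg R/I(C_n)$.
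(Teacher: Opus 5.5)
Your proposal is correct and follows essentially the same route as the paper. It uses the Mayer--Vietoris sequence for $\Delta(C_n|_W)\cup\bigl(z*\Delta(C_n|_{W\setminus\mathcal C_1})\bigr)$, reduces the degree-$k$ case to $W\supseteq V(H)$, and gets injectivity of $\widetilde H_{k-1}(A)\to\widetilde H_{k-1}(\Delta(C_n|_W))$ by factoring $A\hookrightarrow\Delta$ and invoking \Cref{prop:inclusion-nonzero}; the only differences are cosmetic (you bound the intersection's homology via \Cref{lem:ind-join} instead of regularity, and you fold the paper's separate case $W=V(H)$ into the factoring argument).
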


\begin{proof}
It follows from  \cite[Theorem 5.2]{Selvi_cycles} that  $\reg R/I(C_{3k}) = k$. Since $C_{3k}$ is an induced subgraph of $G$, we have $ k=\reg R/I(C_{3k})  \leq \reg S/I(G)$. It remains to prove $\reg S/I(G) \leq k$.

    For this upper bound on $\reg S/I(G)$, we use Remark~\ref{rem:reg_pdim} (a):
    $$ \reg S/I(G) = 1+ \max \{ r : \widetilde{H}_r (\Delta(G|_W)) \neq 0 \text{ for some } W \subseteq V(G)\}.$$
Our goal is to show that, for any $W \subseteq V(G)$, one has  $\widetilde{H}_r (\Delta(G|_W))=0$ for $r\geq k$. 

    Let $\Delta^0= \Delta (G)$ so that $\Delta^0|_W= \Delta(G|_W)$. Following the notation in Setup~\ref{setup:cycle} and \Cref{lem:partial-cone-complex}, we have
    $$\Delta^0= \Delta \cup c(A)$$
    where  $c(A)$ is the cone $z\star A$  with apex $z$ such that $\Delta \cap c(A)=A$.

    If $z\notin W$, then $W\subseteq V(C_n)$ and $G|_W=C_n|_W$. So, $\widetilde{H}_r (\Delta(G|_W))  \cong  \widetilde{H}_r (\Delta(C_n|_W))=0$ for $r\geq k$ and  for any $W\subseteq V(C_n)$ since $\reg R/I(C_n)=k$.

    If $z\in W$, let $W=T\cup \{z\}$. Then
    $$\Delta^0|_W = \Delta|_T \cup c( A|_T)$$
    where $\Delta|_T= \Delta (C_n|_T)$ and $A_T= \Delta (H|_{T \cap V(H)}).$

    The following observations will be useful in the following part of the proof:
    \begin{itemize}
        \item Since  $C_n|_T$ is an induced subgraph of $C_n$, we have  $\reg R/ I(C_n|_T) \leq \reg R/ I(C_n)= k$. This  implies that $\widetilde{H}_r (\Delta|_T) =0$ for $r\geq k$.
        \item We have $A_T = \Delta (H')$ where  $H'=H|_{T \cap V(H)}$. Then $\reg R/I(H')\leq \reg R/I(H) \leq \reg R/I(C_n) = k$ since $H'$ is  an induced subgraph of $H$ (which itself is an induced subgraph of $C_n$). So, we get $\widetilde{H}_r (A|_T) =0$ for $r\geq k$.
    \end{itemize}
Using these two observations in the following Mayer-Vietoris sequence
$$  ~~~~~~\cdots \longrightarrow \widetilde{H}_r (A|_T) \xlongrightarrow {i_{\star}} \widetilde{H}_r (\Delta|_T)  \longrightarrow \widetilde{H}_r (\Delta^0|_W) \longrightarrow \widetilde{H}_{r-1} (A|_T)  \longrightarrow  \cdots$$
we conclude that $\widetilde{H}_r (\Delta^0|_W) =0$ for $r>k$.

If $r=k$, we can update the above long exact sequence as
$$  ~~~~~~ 0 \longrightarrow \widetilde{H}_k (\Delta^0|_W) \longrightarrow \widetilde{H}_{k-1} (A|_T) \longrightarrow  \widetilde{H}_{k-1} (\Delta|_T)   \longrightarrow  \widetilde{H}_{k-1} (\Delta^0|_W)  \longrightarrow   \cdots$$

Observe that if $T \cap V(H) \subset V(H)$, then $\reg R/I(H') \leq k-1$. This means  $\widetilde{H}_{k-1} (A|_T)=0$ which implies that $\widetilde{H}_k (\Delta^0|_W)=0$.  For the remainder of the proof, suppose 
$$T \cap V(H) = V(H).$$ 
Notice that $A|_T=A$ in this case. Since $V(H) \subseteq T\subseteq V(C_n)$, we consider the following three cases to complete the proof:

\textbf{Case 1.} If $T=V(C_n)$, then $\Delta^0|_W=\Delta^0$ and $\Delta|_T=\Delta$. We have
    $$
    0 \longrightarrow \widetilde{H}_k (\Delta^0) \longrightarrow \widetilde{H}_{k-1} (A) \xlongrightarrow {i_{\star}}  \widetilde{H}_{k-1} (\Delta)   \longrightarrow \cdots.
    $$
Since $i_{\star}$ is injective by \Cref{prop:inclusion-nonzero}, we conclude that $\widetilde{H}_k (\Delta^0|_W)=0$ in this case.

\textbf{Case 2.} If $T=V(H)$, then $\Delta|_T=A|_T= A$ and
     $$
     0 \longrightarrow \widetilde{H}_k (\Delta^0) \longrightarrow \widetilde{H}_{k-1} (A) \xlongrightarrow {i_{\star}}  \widetilde{H}_{k-1} (A)   \longrightarrow \cdots.
    $$
This means that $i_{\star}$ is the identity map, thus injective. So, $\widetilde{H}_k (\Delta^0|_W)=0$ in this case as well.

\textbf{Case 3.} If $V(H) \subset T\subset V(C_n)$, we have  $A \subset \Delta|_T \subset \Delta$ where
\[
   j \colon A \hookrightarrow \Delta|_T \text{ and } k \colon \Delta|_T \hookrightarrow\Delta  \text{ such that }   i=k\circ j \colon A \hookrightarrow \Delta
\]
induces the homomorphism
\[
  j_{\star}: \widetilde H_{k-1}(A;\Bbbk) \longrightarrow \widetilde H_{k-1}(\Delta|_T;\Bbbk) \text{ and }  k_{\star}: \widetilde H_{k-1}(\Delta|_T;\Bbbk) \longrightarrow \widetilde H_{k-1}(\Delta;\Bbbk)
\]
such that  $k_{\star} \circ  j_{\star} = i_{\star}$. So, the updated Mayer-Vietoris sequence is    
    $$  ~~~~~~ 0 \longrightarrow \widetilde{H}_k (\Delta^0|_W) \longrightarrow \widetilde{H}_{k-1} (A|_T) \xlongrightarrow{j_{\star}}  \widetilde{H}_{k-1} (\Delta|_T)   \longrightarrow    \cdots$$
  Since $i_{\star}$ is injective by \Cref{prop:inclusion-nonzero}, we conclude that $j_{\star}$ is injective which implies that $\widetilde{H}_k (\Delta^0|_W)=0$, as desired. 
\end{proof}

\subsection{Suspension over any maximal independent set}

Let   $\mathcal{C}$  be a maximal independent set of $C_n$ and let $G:=C_n(\mathcal{C})$ be the $\mathcal{C}$-suspension of  $C_n$  such that
$$I(G)= I(C_n)+(zx_i: x_i\in \mathcal{C})  \subseteq S=R[z].$$
 Our goal in this subsection is to detect how the regularity, projective dimension and the $\mathfrak a$-invariant changes from $C_n$ to $G$. It is worth pointing out that no assumptions on $n$ are made, unlike in the previous subsection.

\begin{observation}\label{obs:cycle_delete_C}
Let $ H:=G-N[z]$.  Then  \(H\) is an induced
subgraph of \(C_n\) on \(V(C_n)\setminus \mathcal C\) since  \(z\) is adjacent exactly to the vertices of \(\mathcal C\). Let $\ell:=n-2|\mathcal{C}|$. Because $\mathcal{C}$ is a maximal independent set of a cycle, the induced
subgraph on $V(C_n)\setminus \mathcal{C}$ has no path of length $\ge 2$. Hence it is a disjoint union of
$\ell$ edges and \(|\mathcal C|-\ell\) isolated vertices. 
\end{observation}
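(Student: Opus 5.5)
The plan is a direct combinatorial argument on the cyclic order of the vertices of $C_n$, followed by a count. The first assertion is immediate. The closed neighborhood of $z$ in $G=C_n(\mathcal C)$ is $N[z]=\mathcal C\cup\{z\}$, because $z$ is adjacent exactly to the vertices of $\mathcal C$. Hence $G-N[z]$ is the subgraph of $G$ induced on $V(C_n)\setminus\mathcal C$. Since no edges at $z$ survive, this is the same as $C_n|_{V(C_n)\setminus\mathcal C}$.

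For the structure of $H$, I would first use maximality of $\mathcal C$ to bound degrees in $H$. Every vertex $v\notin\mathcal C$ has a neighbor in $\mathcal C$, since otherwise $\mathcal C\cup\{v\}$ would be independent. In $C_n$ each vertex has exactly two neighbors, so $v$ has at most one neighbor in $V(C_n)\setminus\mathcal C$. Thus every vertex of $H$ has degree at most $1$. In particular $H$ contains no path of length $2$, because the middle vertex of such a path would have both of its cycle-neighbors outside $\mathcal C$. Therefore every component of $H$ is either an isolated vertex or a single edge $K_2$.

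To count the components, I would cut the cycle at the vertices of $\mathcal C$. This is possible since $\mathcal C\neq\emptyset$, being maximal independent in a nonempty graph. Removing the $|\mathcal C|$ vertices of $\mathcal C$ from the cyclic order leaves exactly $|\mathcal C|$ gaps, one between each pair of cyclically consecutive elements of $\mathcal C$ (a single gap wrapping around when $|\mathcal C|=1$). Each gap is nonempty, because $\mathcal C$ is independent. Each gap has at most $2$ vertices by the degree bound above, since a gap of three or more vertices would contain a vertex with no neighbor in $\mathcal C$. Distinct gaps are separated by vertices of $\mathcal C$, so the components of $H$ are exactly the gaps. A gap of size $2$ is an edge of $C_n$, and a gap of size $1$ is an isolated vertex.

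Let $a$ and $b$ be the numbers of gaps of size $2$ and of size $1$. Counting gaps gives $a+b=|\mathcal C|$, and counting vertices gives $2a+b=n-|\mathcal C|$. Subtracting, $a=n-2|\mathcal C|=\ell$ and $b=|\mathcal C|-\ell$, which is the claimed decomposition. The only step needing care is the degenerate small case: for $|\mathcal C|=1$ (only when $n=3$), the single gap is $\{x_2,x_3\}$, an edge. This is consistent with $\ell=1$ and $|\mathcal C|-\ell=0$. I do not expect any real obstacle here.
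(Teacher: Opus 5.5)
Your proposal is correct and follows the same route as the paper. Both arguments identify $H$ with $C_n|_{V(C_n)\setminus\mathcal C}$, use maximality of $\mathcal C$ to rule out paths with two edges (your degree-at-most-one argument), and conclude that $H$ is a disjoint union of edges and isolated vertices; the paper simply asserts the resulting counts $\ell=n-2|\mathcal C|$ and $|\mathcal C|-\ell$, and your gap count ($a+b=|\mathcal C|$, $2a+b=n-|\mathcal C|$) supplies exactly that omitted justification.
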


\begin{thm}\label{thm:cycle_reg_pdim}
Suspensions of cycles over a maximal independent set preserves regularity and increases projective dimension by one:
 \begin{enumerate}
     \item[(a)] $\reg S/ I(G) = \reg R/I(C_n)$. 
     \item[(b)] $\pdim S/ I(G) = \pdim R/I(C_n)+1$.
 \end{enumerate}
\end{thm}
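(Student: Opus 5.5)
We will use the short exact sequence of \Cref{lem:ses} with $J=I(G)$ and the variable $z$, together with the induced-subgraph description of Hochster's formula. We have $(I(G),z)=(I(C_n),z)$, so $S/(I(G),z)\cong R/I(C_n)$. We also have $(I(G):z)=I(C_n)+(x_i:x_i\in\mathcal C)$. Modulo the variables of $\mathcal C$ this is the edge ideal of $H=G-N[z]$. By \Cref{obs:cycle_delete_C}, $H$ is a disjoint union of $\ell=n-2|\mathcal C|$ edges and $|\mathcal C|-\ell$ isolated vertices. Hence
\[
\reg S/(I(G):z)=\ell, \qquad \pdim S/(I(G):z)=|\mathcal C|+\ell ,
\]
by \Cref{lem:disjoint_sum} and \Cref{lem:ind-join}. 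We also use the known values $\reg R/I(C_n)=\lfloor (n+1)/3\rfloor$ (via \cite{Selvi_cycles}) and $\pdim R/I(C_n)=\lceil 2n/3\rceil$ from the literature.

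\textbf{Regularity.} The lower bound is immediate, since $C_n$ is an induced subgraph of $G$. For the upper bound, \Cref{lem:ses} gives
\[
\reg S/I(G)\le \max\{\reg R/I(C_n),\ \ell+1\}.
\]
It therefore suffices to show $\ell+1\le \reg R/I(C_n)$. Since $\mathcal C$ is maximal independent, the gaps between consecutive elements of $\mathcal C$ around the cycle have length $1$ or $2$, and $\ell$ counts the gaps of length $2$. The $\ell$ edges of $H$ together with a suitable choice of one further edge should give an induced matching of size $\ell+1$ in $C_n$, or at least a counting bound of the form $\ell+1\le\lfloor (n+1)/3\rfloor$. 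Note that $n=|\mathcal C|+\ell+|\mathcal C|$ up to the gap count, so $n=2|\mathcal C|+\ell$ with $\ell\le|\mathcal C|$. Thus $3\ell+?\le n$, and we must check when $\ell+1\le\lfloor(n+1)/3\rfloor$ holds.

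The expected failure is $\ell=|\mathcal C|=k$, $n=3k$: this is exactly the wide-spoke case, where $\ell+1=k+1>k$. That case is \Cref{thm:cycle_spokes}. In all other cases we have $\ell\le |\mathcal C|-1$, which gives $n\ge 3\ell+2$ and hence $\ell+1\le \lfloor (n+1)/3\rfloor$. This arithmetic, together with isolating the extremal case, is the main obstacle. It is handled by quoting \Cref{thm:cycle_spokes} rather than redoing the Morse-theoretic argument.

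\textbf{Projective dimension.} For the upper bound, \Cref{lem:ses2}(a) gives
\[
\pdim S/I(G)\le\max\{\pdim R/I(C_n)+1,\ |\mathcal C|+\ell\}.
\]
We have $|\mathcal C|+\ell=n-|\mathcal C|$, which is the size of the minimal vertex cover $V\setminus\mathcal C$. Hence $|\mathcal C|+\ell\le\bight I(C_n)\le \pdim R/I(C_n)$ by \Cref{rem:bight_pdim}, and the upper bound is $\pdim R/I(C_n)+1$.

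For the lower bound we repeat verbatim the argument of \Cref{thm:cover-cone-reg-pdim}(b). Take $W\subseteq V(C_n)$ and $r$ realizing $\pdim R/I(C_n)=|W|-r-1$, and set $W'=W\cup\{z\}$. By \Cref{lem:partial-cone-complex},
\[
\Delta(G|_{W'})=\Delta(C_n|_W)\cup z*\Delta(H|_{W\setminus\mathcal C}).
\]
If $\widetilde H_\ast(\Delta(H|_{W\setminus \mathcal C}))$ vanishes in degrees $r$ and $r-1$, Mayer--Vietoris gives nonvanishing of $\widetilde H_r(\Delta(G|_{W'}))$, and the lower bound follows.

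Otherwise we need a different witness. The plan is to choose $W$ cleverly, namely $W=V(C_n)$ when $\pdim$ is attained there, or to use instead $W'=(V\setminus\mathcal C)\cup\{z\}$. The complex on $(V\setminus\mathcal C)\cup\{z\}$ is a disjoint union of the star $K_{1,|\mathcal C|}$-free part with $z$ isolated, giving a sphere wedge with a point. This witness realizes $|\mathcal C|+\ell+1$, which suffices when $\bight I(C_n)=\pdim R/I(C_n)$. We expect the general case to need a short case check on $n \bmod 3$.
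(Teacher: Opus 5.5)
Your regularity argument and your upper bound for projective dimension are correct and follow the paper: the same colon sequence in $z$, the bound $\reg S/I(G)\le\max\{\reg R/I(C_n),\ell+1\}$, and the isolation of the single bad case $\ell=|\mathcal C|$, $n=3|\mathcal C|$, which is handed off to \Cref{thm:cycle_spokes}. Your arithmetic ($\ell\le|\mathcal C|-1$ forces $n=2|\mathcal C|+\ell\ge 3\ell+2$, hence $\ell+1\le\lfloor (n+1)/3\rfloor$) is a reparametrization of the paper's use of $|\mathcal C|\ge\lceil n/3\rceil$. The exploratory sentences about an induced matching of size $\ell+1$ and ``$3\ell+?\le n$'' should be removed.

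The gap is in the lower bound $\pdim S/I(G)\ge \pdim R/I(C_n)+1$.

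\textbf{Why your argument fails.} Your Mayer--Vietoris transfer only works when $\widetilde H_r(\Delta(H|_{W\setminus\mathcal C}))\to\widetilde H_r(\Delta(C_n|_W))$ is not surjective. Your fallback witness fails outright. In $G|_{(V\setminus\mathcal C)\cup\{z\}}$ the vertex $z$ is isolated, because all its neighbours lie in $\mathcal C$. So the independence complex is a cone with apex $z$ and is contractible. An isolated vertex of the graph is a cone point of $\Delta$, not a disjoint point, since disjoint unions of graphs give joins (\Cref{lem:ind-join}). This set therefore realizes nothing.

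\textbf{The missing idea.} The set $(V\setminus\mathcal C)\cup\{z\}$ is the right one, but as a \emph{minimal vertex cover} of $G$. Removing $z$ uncovers an edge $zx_i$ with $x_i\in\mathcal C$. Removing $v\in V\setminus\mathcal C$ uncovers an edge $vw$ with $w\in\mathcal C$, by maximality. So \Cref{rem:bight_pdim} gives $\pdim S/I(G)\ge n-|\mathcal C|+1$. Applying \Cref{lem:ses2}(c) to the same sequence makes this bound sufficient unconditionally:
\[
\pdim R/I(C_n)+1\le\max\{\pdim S/I(G),\ n-|\mathcal C|+1\}=\pdim S/I(G).
\]
No hypothesis like $\bight I(C_n)=\pdim R/I(C_n)$ and no case check modulo $3$ is needed.

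\textbf{An alternative repair.} Your first plan can also be completed. Comparing Kozlov's homotopy types of $\Delta(C_n)$ with the known value of $\pdim R/I(C_n)$ shows that $W=V(C_n)$ always realizes it. The intersection $\Delta(H)$ is a cone unless $|\mathcal C|=\ell$, since $H$ then has isolated vertices. In the wide-spoke case, $\widetilde H_{k-1}(\Delta(H))\cong\Bbbk$ cannot surject onto $\widetilde H_{k-1}(\Delta(C_{3k}))\cong\Bbbk^2$. The nonzero cokernel injects into $\widetilde H_{k-1}(\Delta(G))$.

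\textbf{A wrong value.} The formula you quote is incorrect: $\pdim R/I(C_n)=\lceil(2n-1)/3\rceil$, not $\lceil 2n/3\rceil$. For instance, $\Delta(C_5)$ is a pentagon, so $R/I(C_5)$ is Gorenstein of dimension $2$ with projective dimension $3$, not $4$. With your value, checking whether $\pdim R/I(C_n)$ is attained at $W=V(C_n)$ would wrongly fail for $n\equiv 2\pmod 3$.
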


\begin{proof}
Consider the short exact sequence
\[
0 \longrightarrow S/(I(G):z)(-1)\xrightarrow{\cdot z} S/I(G)
\longrightarrow S/(I(G),z)\longrightarrow 0.
\]
Since $z$ does not appear in $I(C_n)$, we have $(I(G),z)=(I(C_n),z)$ and
\begin{align*}
  \reg S/(I(G),z) &=\reg R/I(C_n),\\ 
\pdim S/(I(G),z)&=\pdim R/I(C_n)+1.  
\end{align*}
Moreover,
\[
I(G):z = I(H)+(x_i:x_i\in\mathcal{C}).
\]

Recall from \Cref{obs:cycle_delete_C} that  $I(H)$ is generated by $\ell=n-2|\mathcal{C}|$
pairwise coprime quadrics, and together with the $|\mathcal{C}|$ variables $(x_i:x_i\in\mathcal{C})$
these form a complete intersection with $\ell+|\mathcal{C}|=n-|\mathcal{C}|$ minimal generators. Therefore
\[
\reg S/(I(G):z)=\ell=n-2|\mathcal{C}|
\qquad\text{and}\qquad
\pdim S/(I(G):z)=n-|\mathcal{C}|.
\]

\textup{(a)} Notice that
$\reg R/I(C_n) \le \reg S/I(G)$ since $C_n$ is an induced subgraph of $G$. For the reverse inequality, combining conclusions from above and  \Cref{lem:ses2}(d) yields to
\[
\reg S/I(G)\le  \max\{\reg R/I(C_n),\ n-2|\mathcal{C}|+1\}.
\]
 Hence it suffices to show
\[
n-2|\mathcal{C}|+1 \le \reg R/I(C_n).
\]
Using the fact that $|\mathcal{C}|\ge \lceil n/3\rceil$, we have
\[
n-2|\mathcal{C}|+1 \le n-2\lceil n/3\rceil+1=
\begin{cases}
k+1 &\text{if }n=3k+2,\\
k   &\text{if }n=3k+1,\\
k+1 &\text{if }n=3k.
\end{cases}
\]
By \cite[Theorem~5.2]{Selvi_cycles},
\[
\reg R/I(C_n)=
\begin{cases}
k+1 &\text{if }n=3k+2,\\
k   &\text{if }n=3k+1,\\
k   &\text{if }n=3k.
\end{cases}
\]
Thus the desired inequality holds for $n\equiv 1,2\pmod 3$. When $n=3k$, it holds as soon as
$|\mathcal{C}|\ge k+1$ (since then $n-2|\mathcal{C}|+1\le k$). The remaining case
$n=3k$ and $|\mathcal{C}|=k$ is exactly the situation handled in \Cref{thm:cycle_spokes}.
Therefore $\reg S/I(G)=\reg R/I(C_n)$.

\medskip
\noindent\textup{(b)}
By \Cref{lem:ses2}(a), we have
\[
\pdim S/I(G)\le \max\{\pdim S/(I(G):z),\ \pdim S/(I(G),z)\}.
\]
It follows from the initial conclusions of the proof that 
$$
\max\{\pdim S/(I(G):z),\ \pdim S/(I(G),z)\}
=\max\{n-|\mathcal{C}|,\ \pdim R/I(C_n)+1\}.
$$ 
Let $\mathcal{M}:=V(C_n)\setminus \mathcal{C}$. Since $\mathcal{C}$ is maximal independent set, $\mathcal{M}$
is a minimal vertex cover of $C_n$. Hence $\pdim R/I(C_n)\ge \bight I(C_n)\ge |\mathcal{M}|=n-|\mathcal{C}|$ by
\Cref{rem:bight_pdim}. So the maximum above equals
$\pdim R/I(C_n)+1$. Therefore
\[
\pdim S/I(G)\le \pdim R/I(C_n)+1.
\]
For the reverse inequality, \Cref{lem:ses2}(c) gives
\[
\pdim R/I(C_n)+1\le \max\{\pdim S/I(G),\ n-|\mathcal{C}|+1\}.
\]
It  suffices to show 
$$
\pdim S/I(G)\ge n-|\mathcal{C}|+1.
$$
Observe that $\mathcal{M}\cup\{z\}$ is a minimal
vertex cover of $G$. To see this, notice that removing $z$ leaves an edge $zx_i$ uncovered for any $x_i\in\mathcal{C}$, while
removing $v\in\mathcal{M}$ leaves uncovered an edge $vw$ of the cycle with $w\in\mathcal{C}$ (such a
neighbor exists by maximality of $\mathcal{C}$). Hence, by \Cref{rem:bight_pdim}, we have 
$\pdim S/I(G)\ge  \bight I(G)\ge |\mathcal{M}|+1=n-|\mathcal{C}|+1$. This completes the proof.
\end{proof}

The following result about the $\mathfrak a$-invariant of cycles is a consequence of \cite[Theorem 4.5]{biermann2024degree}. Even though its proof is immediate from \cite{biermann2024degree}, we include the proof to show the usefulness of the multiplicity $M(G)$ approach. 

\begin{cor}\label{cor:only_cycle_a-inv}
$\mathfrak{a} (R/I(C_n))=0$.
\end{cor}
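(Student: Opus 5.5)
By \Cref{rem:a_M}, $\mathfrak a(R/I(C_n))=-M(C_n)$. It therefore suffices to show $M(C_n)=0$, which by \Cref{rem:deg-h} is equivalent to $P_{C_n}(-1)\neq 0$. The whole proof thus reduces to evaluating the independence polynomial of the cycle at $x=-1$.

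\textbf{Step 1: values for paths.} Set $a_n:=P_{P_n}(-1)$, with the conventions $P_{P_0}=1$ and $P_{P_{-1}}=1$, which are compatible with the recurrence. From \Cref{rem:ind_poly_path_cycle}(a) we get
\[
a_n=a_{n-1}-a_{n-2},
\]
with $a_0=1$ and $a_1=P_{P_1}(-1)=0$. This sequence is periodic of period $6$:
\[
1,0,-1,-1,0,1,\dots
\]
Equivalently, we could substitute $x=-1$ into \Cref{rem:closed_ind_path_cycle}, where $s=\sqrt{-3}$ and $(1\pm s)/2$ are primitive sixth roots of unity. The recurrence is the cleaner route.

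\textbf{Step 2: values for cycles.} Set $c_n:=P_{C_n}(-1)$. From \Cref{rem:ind_poly_path_cycle}(b) we get
\[
c_n=a_{n-1}-a_{n-3}.
\]
Alternatively, the closed form gives directly $c_n=\omega^n+\bar\omega^n=2\cos(n\pi/3)$, where $\omega=e^{i\pi/3}$. This value lies in $\{2,1,-1,-2\}$ and is never $0$, since $\cos(n\pi/3)\in\{\pm1,\pm\tfrac12\}$ for every integer $n$. The periodic table from Step 1 gives the same result, $c_n\in\{\pm1,\pm2\}$, as a check. Hence $P_{C_n}(-1)\neq0$, so $M(C_n)=0$ and $\mathfrak a(R/I(C_n))=0$.

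\textbf{Main obstacle.} The only delicate point is bookkeeping for small $n$. For $n=3$ the formula involves $P_{P_0}$, and we must make sure the recurrence conventions, or \Cref{lem:delete_contract} applied directly to $C_3=K_3$, agree. Directly, $P_{C_3}(x)=1+3x$, so $P_{C_3}(-1)=-2\neq0$, consistent with $2\cos\pi=-2$. Using the closed form of \Cref{rem:closed_ind_path_cycle} avoids these index issues entirely, so I would present that version.
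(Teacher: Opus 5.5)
Your proof is correct and follows the paper's own argument: you reduce to $P_{C_n}(-1)\neq 0$ via $\mathfrak a=-M$, then use the $6$-periodic sequence $a_m=P_{P_m}(-1)$ together with $P_{C_n}(-1)=a_{n-1}-a_{n-3}$. Your closed-form shortcut $P_{C_n}(-1)=2\cos(n\pi/3)\in\{\pm1,\pm2\}$ is just a repackaging of the same computation, and substituting $s=\sqrt{-3}$ into \Cref{rem:closed_ind_path_cycle} is legitimate because that expression is even in $s$, hence a polynomial in $x$.
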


\begin{proof}
Recall from \Cref{rem:a_M} that \(\mathfrak{a}(R/I(C_n))=-M(C_n)\).
Hence it suffices to prove $M(C_n)=0$ which is equivalent to  \(P_{C_n}(-1)\neq 0\) by \Cref{rem:deg-h}.

We have the following recursion from \Cref{rem:ind_poly_path_cycle}:
\[
P_{C_n}(x)=P_{P_{n-1}}(x)+xP_{P_{n-3}}(x).
\]
Set \(a_m:=P_{P_m}(-1)\). Then
\[
P_{C_n}(-1)=a_{n-1}-a_{n-3}.
\]
From the path recurrence \(P_{P_m}(x)=P_{P_{m-1}}(x)+xP_{P_{m-2}}(x)\) from \Cref{rem:ind_poly_path_cycle} with
\(P_{P_0}(x)=1\) and \(P_{P_1}(x)=1+x\), we obtain
\[
a_0=1,\qquad a_1=0,\qquad a_m=a_{m-1}-a_{m-2}\ \ (m\ge 2).
\]
Computing the first few terms gives
\[
(a_m)_{m\ge 0} = 1, 0, -1, -1, 0, 1, 1, 0, -1, -1, 0, 1,\ldots.
\]
So \((a_m)\) is \(6\)-periodic. A quick verification shows
\[
a_{n-1}-a_{n-3}\in\{2,1,-1,-2,-1,1\}.
\]
Hence \(P_{C_n}(-1)\neq 0\). Thus $M(C_n)=0$.
\end{proof}

\begin{thm}\label{thm:cycle_a-inv}
The $\mathfrak a$-invariant is preserved for cycles under suspension over maximal independent set.  In particular, we have $\mathfrak{a} (S/ I(G)) = \mathfrak{a} (R/I(C_n))=0$.
\end{thm}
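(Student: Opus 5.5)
We reduce the statement to a computation with independence polynomials. By \Cref{rem:a_M} we have $\mathfrak a(S/I(G))=-M(G)$, and by \Cref{cor:only_cycle_a-inv} we already know $\mathfrak a(R/I(C_n))=0$. So it suffices to show $M(G)=0$. By \Cref{rem:deg-h}, this is equivalent to $P_G(-1)\neq 0$.

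First we compute $P_G$ using \Cref{lem:delete_contract} at the vertex $z$:
\[
P_G(x)=P_{G-z}(x)+xP_{G-N[z]}(x)=P_{C_n}(x)+xP_H(x),
\]
where $H=G-N[z]$ is as in \Cref{obs:cycle_delete_C}. That observation says $H$ is a disjoint union of $\ell=n-2|\mathcal C|$ edges and $|\mathcal C|-\ell$ isolated vertices. Independence polynomials are multiplicative over disjoint unions, since an independent set splits uniquely into its restrictions to the components, as in \Cref{lem:ind-join}. Hence
\[
P_H(x)=(1+2x)^{\ell}(1+x)^{|\mathcal C|-\ell}.
\]

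Next we evaluate at $x=-1$, splitting into two cases according to whether $|\mathcal C|>\ell$.

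\textbf{Case $|\mathcal C|>\ell$.} The factor $(1+x)^{|\mathcal C|-\ell}$ vanishes at $-1$, so $P_G(-1)=P_{C_n}(-1)$. This is nonzero by the proof of \Cref{cor:only_cycle_a-inv}.

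\textbf{Case $|\mathcal C|=\ell$.} Here $n=3|\mathcal C|$, so $n=3k$ and $|\mathcal C|=k$; this is exactly the wide-spoke case. Then
\[
P_G(-1)=P_{C_{3k}}(-1)-(-1)^k.
\]
Using the $6$-periodic sequence $a_m$ from the proof of \Cref{cor:only_cycle_a-inv}, we get $P_{C_n}(-1)=a_{n-1}-a_{n-3}$. This is $2$ when $n\equiv 0\pmod 6$ and $-2$ when $n\equiv 3\pmod 6$, which can also be read off from the closed form in \Cref{rem:closed_ind_path_cycle}: at $x=-1$ we have $s=\sqrt{-3}$, and $P_{C_n}(-1)=2\cos(n\pi/3)$. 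So $P_{C_{3k}}(-1)=2(-1)^k$, and therefore
\[
P_G(-1)=2(-1)^k-(-1)^k=(-1)^k\neq 0.
\]

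In both cases $P_G(-1)\neq 0$, so $M(G)=0$ and $\mathfrak a(S/I(G))=0=\mathfrak a(R/I(C_n))$.

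The only delicate point is the wide-spoke case. There the correction term $xP_H(x)$ does not vanish at $-1$, so we must check that it does not cancel $P_{C_{3k}}(-1)$. The explicit values $\pm 2$ versus $\pm 1$ rule out cancellation, but the signs must be checked against the periodic table of the $a_m$.
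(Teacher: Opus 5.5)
Your proposal is correct and follows the paper's proof essentially step for step. You use deletion--contraction at $z$ to get $P_G(x)=P_{C_n}(x)+x(1+x)^{|\mathcal C|-\ell}(1+2x)^{\ell}$, split on $|\mathcal C|>\ell$ versus $|\mathcal C|=\ell$, and in the wide-spoke case cancellation is ruled out via $P_{C_{3k}}(-1)=2(-1)^k$. Your closed-form check $P_{C_n}(-1)=2\cos(n\pi/3)$ confirms that sign independently, but it is not a different route.
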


\begin{proof}
It suffices to prove $M(G)=M(C_n)=0$ by \Cref{rem:a_M}. Since  $M(C_n)=0$  from \Cref{cor:only_cycle_a-inv}, we focus on proving $M(G)=0$ which is equivalent to 
\(P_G(-1)\neq 0\).

Since \(G-z=C_n\) and $H=G-N[z]$ from \Cref{obs:cycle_delete_C}, applying the recursion from \Cref{lem:delete_contract}  to $G$ at \(z\) results with
\[
P_G(x)=P_{C_n}(x)+x P_{H}(x).
\]
Recall from  \Cref{obs:cycle_delete_C} that $H$ has   \(\ell\) edges and  \(|\mathcal C|-\ell\) isolated vertices of \(H\) where \(\ell=n-2|\mathcal{C}|\). Then, by multiplicativity of independence polynomials over disjoint unions, we have
\[
P_H(x)=(1+x)^{|\mathcal C|-\ell}(1+2x)^\ell.
\]
Substituting this into the above recursion gives
\begin{equation}\label{eq:PG-cycle-cone}
P_G(x)=P_{C_n}(x)+x(1+x)^{|\mathcal C|-\ell}(1+2x)^\ell.
\end{equation}

We now evaluate at \(x=-1\).

\textbf{Case 1:} Suppose \(|\mathcal C|>\ell\).
Then \((1+x)^{|\mathcal C|-\ell}\) vanishes at \(x=-1\). So \eqref{eq:PG-cycle-cone} yields
\[
P_G(-1)=P_{C_n}(-1).
\]
Since $M(C_n)=0$ by \Cref{cor:only_cycle_a-inv}, we have $P_{C_n}(-1)\neq 0$. Thus $M(G)=M(C_n)=0$.

\textbf{Case 2:} Suppose \(|\mathcal C|=\ell\).
Then \(H\) has no isolated vertices and it is a disjoint union of \(\ell\) edges. This means \(|V(H)|=2\ell\)
and  \(n=3\ell\).
Evaluating \eqref{eq:PG-cycle-cone} at \(x=-1\) gives
\[
P_G(-1)=P_{C_{3\ell}}(-1)+(-1)^{\ell+1}.
\]
From the same \(6\)-periodic computation in the proof of \Cref{cor:only_cycle_a-inv},
\(P_{C_{3\ell}}(-1)=2\) when \(3\ell\equiv 0\pmod 6\) and \(P_{C_{3\ell}}(-1)=-2\) when
\(3\ell\equiv 3\pmod 6\); equivalently,
\[
P_{C_{3\ell}}(-1)=2(-1)^\ell \neq 0.
\]
Thus
\[
P_G(-1)=2(-1)^\ell+(-1)^{\ell+1}=(-1)^\ell\neq 0.
\]
Therefore, \Cref{rem:a_M}, we have the desired outcome:
 \[
 \mathfrak{a}(S/I(G))=\mathfrak{a}(R/I(C_n)=M(G)=M(C_n)=0.\qedhere
 \]
\end{proof}

\section{Suspension over maximal independent sets:  paths}\label{sec:paths}

In this final section we turn to paths and study their suspensions over maximal independent sets. As in the preceding section on cycles, our goal is to understand how this suspension operation affects the three algebraic invariants of interest: $\reg$, $\pdim$, and the $\mathfrak a$--invariant.

We begin by fixing notation. Let $P_n$ denote the path on vertices $x_1,\ldots,x_n$ with edge ideal
\[
I(P_n)=(x_1x_2,\;x_2x_3,\;\ldots,\;x_{n-1}x_n)\subseteq R=\Bbbk[x_1,\ldots,x_n].
\]
Let $\mathcal{C}$ be a maximal independent set of $P_n$ and let $G:=P_n(\mathcal{C})$ be the $\mathcal{C}$-suspension of $P_n$. Equivalently,
\[
I(G)= I(P_n)+\bigl(zx_i:\; x_i\in \mathcal{C}\bigr)\subseteq S=R[z].
\]
Finally, set $P:=G\setminus N[z]$. Since $N[z]=\{z\}\cup \mathcal{C}$, the graph $P$ is precisely the induced subgraph of $P_n$ on $V(P_n)\setminus \mathcal{C}$.

\begin{lem}\label{lem:number-edges-P}
Let  $\ell$ denotes the number of edges of $P$. Then
\[
\ell \;\le\; \left\lfloor \frac{n-1}{3} \right\rfloor.
\]
\end{lem}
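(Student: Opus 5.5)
The plan is to describe $P$ combinatorially and then count. Since $\mathcal{C}$ is a maximal independent set of $P_n$, its complement $\mathcal{M}=V(P_n)\setminus\mathcal{C}$ is a minimal vertex cover, and $P=P_n|_{\mathcal M}$. First observe that $P$ contains no path on three vertices. Indeed, if $x_{i-1},x_i,x_{i+1}\in\mathcal M$, then $x_i$ has no neighbor in $\mathcal C$, so $\mathcal C\cup\{x_i\}$ would be independent, contradicting maximality. The same reasoning applies to the endpoints: $x_1$ and $x_n$ cannot both lie in $\mathcal M$ together with their unique neighbor. So $x_1\in\mathcal M$ forces $x_2\in\mathcal C$, and similarly at $x_n$. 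Hence $P$ is a disjoint union of $\ell$ edges of the form $\{x_i,x_{i+1}\}$ together with isolated vertices, and neither edge of $P$ contains an endpoint of $P_n$.

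Next I will show that consecutive edges of $P$ are separated by at least one vertex of $\mathcal C$. Order the edges of $P$ as $\{x_{i_1},x_{i_1+1}\},\dots,\{x_{i_\ell},x_{i_\ell+1}\}$ with $i_1<\dots<i_\ell$. Two distinct edges of $P$ cannot share a vertex, since $P$ has no $P_3$. They also cannot be adjacent along the path: if $x_{i+1},x_{i+2}$ were both in $\mathcal M$ as well, we would again get three consecutive vertices of $\mathcal M$. So $i_{j+1}\ge i_j+3$, meaning at least one vertex lies strictly between the two edges.

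The counting then goes as follows. The first edge has $i_1\ge 2$, since $x_1$ is not in any edge of $P$ by the endpoint remark. Similarly the last edge has $i_\ell+1\le n-1$. Combining this with the spacing gives
\[
n-1\ \ge\ i_\ell+1\ \ge\ i_1+3(\ell-1)+1\ \ge\ 3\ell,
\]
so $\ell\le (n-1)/3$. Since $\ell$ is an integer, $\ell\le\lfloor (n-1)/3\rfloor$.

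The only delicate point is the endpoint analysis: that an edge of $P$ cannot use $x_1$ or $x_n$. If $x_1,x_2\in\mathcal M$, then $x_1$ has no neighbor in $\mathcal C$, so $\mathcal C\cup\{x_1\}$ is independent, which is a contradiction. The same argument handles $x_n$. I would also double-check the trivial cases $n\le 3$, where $\ell=0$, to confirm the bound is consistent there.
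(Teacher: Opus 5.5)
Your proof is correct and is essentially the paper's argument. The paper writes $\mathcal C$ as a $0/1$ string and uses maximality to exclude $000$ and a leading or trailing $00$; each edge of $P$ then sits in a $1001$ block, which gives $i_{r+1}-i_r\ge 3$, and combining this with $i_1\ge 2$ and $i_\ell\le n-2$ yields $3\ell\le n-1$, exactly as in your chain of inequalities (the case $\ell=0$ is trivial in both).
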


\begin{proof}
Define a $0/1$-string $(\gamma_1,\dots,\gamma_n)$ by
\[
\gamma_i =
\begin{cases}
1 & \text{if } x_i \in \mathcal{C},\\[2pt]
0 & \text{if } x_i \notin \mathcal{C}.
\end{cases}
\]
Since $\mathcal{C}$  is independent, we have $\gamma_i \gamma_{i+1} \neq 11$ for all $i$.  
Since $\mathcal{C}$  is maximal, every vertex not in $\mathcal{C}$  has a neighbor in $\mathcal{C}$. This is equivalent to
\[
\gamma_1 = 0 \Rightarrow \gamma_2 = 1,\quad
\gamma_n = 0 \Rightarrow \gamma_{n-1} = 1,\quad
\gamma_i = 0 \Rightarrow \gamma_{i-1}=1 \text{ or } \gamma_{i+1}=1
\ (2 \le i \le n-1).
\]
In particular, the substring $000$ never occurs, and we cannot have 
$\gamma_1=\gamma_2=0$ or $\gamma_{n-1}=\gamma_n=0$.  
Thus any pair of consecutive zeros $\gamma_i=\gamma_{i+1}=0$ satisfies $2 \le i \le n-2$.

An edge $x_i x_{i+1}$ lies in $P$ if and only if $\gamma_i=\gamma_{i+1}=0$.  
Let $x_i x_{i+1} \in E(P)$. 
Then for $x_i$ (which is not in $\mathcal{C}$), maximality forces $\gamma_{i-1}=1$ (since $\gamma_{i+1}=0$),  
and for $x_{i+1}$, maximality forces $\gamma_{i+2}=1$ (since $\gamma_i=0$).  
This means every edge of $P$ sits inside a block
\[
\gamma_{i-1}\gamma_i\gamma_{i+1}\gamma_{i+2} = 1 0 0 1.
\]

Let \(E(P)=  \{ \{x_{i_1}, x_{i_1+1}\}, \ldots, \{x_{i_{\ell}}, x_{i_{\ell}+1}\}\}\)
where $i_1<\dots<i_\ell$. Then
\begin{equation}\label{eq:spacing}
i_{r+1} - i_r \;\ge\; 3 \quad \text{for all } r = 1,\dots,\ell-1.
\end{equation}

By \eqref{eq:spacing} we have $i_r \ge 2 + 3(r-1) = 3r-1$ for all $r$, and in particular
\[
3\ell - 1 \;\le\; i_\ell \;\le\; n-2.
\]
Hence $3\ell \le n-1$, so $\ell \le (n-1)/3$. Since $\ell$ is an integer, we have
\(\ell \;\le\; \left\lfloor \frac{n-1}{3} \right\rfloor\), as claimed.
\end{proof}

We adopt the notation introduced in Lemma~\ref{lem:number-edges-P} for the rest of this section.

\begin{remark}\label{rem:extremal-C}
The bound in Lemma~\ref{lem:number-edges-P} is sharp for all $n$.   Recall that each edge of $P$ comes from a pair of consecutive vertices of $\mathcal{C}$ 
at distance~$3$, so $\ell$ is the number of such gaps. Then, for each $n$ there exists a maximal independent set $\mathcal{C}$  with
\[
|E(P)| = \ell = \left\lfloor \frac{n-1}{3} \right\rfloor,
\]
and for $n\equiv 1 \pmod 3$ this extremal configuration is unique.

\begin{enumerate}
    \item[(a)] If $n=3k$, take
\[
\mathcal{C} = \{x_1,x_4,\dots,x_{3k-2},x_{3k}\},
\]
which has gaps of sizes $3,\dots,3,2$, hence $k-1=\lfloor (n-1)/3\rfloor$ gaps of size~$3$.

    \item[(b)] If $n=3k+1$, take
\[
\mathcal{C}= \{x_1,x_4,\dots,x_{3k+1}\},
\]
which has $k=\lfloor (n-1)/3\rfloor$ gaps of size~$3$. Moreover, if a maximal independent set $\mathcal{C}$ 
satisfies $\ell=k$, then all gaps between consecutive vertices of $\mathcal{C}$  must have size~$3$, which forces
$\mathcal{C}=\{x_1,x_4,\dots,x_{3k+1}\}$. In all other cases $\mathcal{C}$  has at least one gap of size~$2$, and hence
$\ell<k$.

    \item[(c)] If $n=3k+2$, again set
\[
\mathcal{C} = \{x_1,x_4,\dots,x_{3k+1}\},
\]
which has $k=\lfloor (n-1)/3\rfloor$ gaps of size~$3$ and is maximal since every vertex outside $\mathcal{C}$ 
is adjacent to one of these vertices.
\end{enumerate} 
\end{remark}

\begin{thm}\label{thm:path_reg}
Suspension of a path over a maximal independent set preserves regularity
$$\reg S/ I(G)= \reg R/I(P_n)$$ 
except when $n\equiv 1 \pmod 3$ and $\mathcal{C}=\{x_1,x_4,\ldots, x_{3k+1}\}$. In this case, we have 
$$\reg S/ I(G)= \reg R/I(P_n)+1.$$
\end{thm}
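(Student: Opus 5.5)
The plan is to mirror the proof of \Cref{thm:cycle_reg_pdim}(a), using the short exact sequence of \Cref{lem:ses} with respect to the suspension vertex $z$:
\[
0 \longrightarrow S/(I(G):z)(-1)\xrightarrow{\cdot z} S/I(G) \longrightarrow S/(I(G),z)\longrightarrow 0 .
\]
Since $z$ does not appear in $I(P_n)$, we have $(I(G),z)=(I(P_n),z)$, so $\reg S/(I(G),z)=\reg R/I(P_n)$. The colon ideal is
\[
I(G):z = I(P)+(x_i : x_i\in\mathcal C).
\]
By the structure established in the proof of \Cref{lem:number-edges-P}, every edge of $P$ sits in a block $1001$. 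Hence $P$ is a disjoint union of $\ell$ edges together with isolated vertices. Therefore $I(G):z$ is a complete intersection generated by $\ell$ pairwise coprime quadrics and $|\mathcal C|$ variables, and $\reg S/(I(G):z)=\ell$.

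For the path itself I will use the standard formula $\reg R/I(P_n)=\lfloor (n+1)/3\rfloor$ (its induced matching number, e.g.\ via \Cref{lem:disjoint_sum} and the known computation for paths). Since $P_n$ is an induced subgraph of $G$, we always have $\reg S/I(G)\ge \reg R/I(P_n)$. \Cref{lem:ses} gives the upper bound
\[
\reg S/I(G)\le \max\{\reg R/I(P_n),\,\ell+1\}.
\]
So everything reduces to comparing $\ell+1$ with $\lfloor (n+1)/3\rfloor$, using the bound $\ell\le\lfloor (n-1)/3\rfloor$ of \Cref{lem:number-edges-P} and the uniqueness statement in \Cref{rem:extremal-C}(b).

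The case analysis then runs as follows.
\begin{itemize}
\item If $n=3k$, then $\ell+1\le k=\reg R/I(P_n)$.
\item If $n=3k+2$, then $\ell+1\le k+1=\reg R/I(P_n)$.
\item If $n=3k+1$ and $\mathcal C\neq\{x_1,x_4,\dots,x_{3k+1}\}$, then \Cref{rem:extremal-C}(b) gives $\ell\le k-1$, so $\ell+1\le k=\reg R/I(P_n)$.
\end{itemize}
In these three cases the upper and lower bounds agree, so regularity is preserved.

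In the extremal case $n=3k+1$, $\mathcal C=\{x_1,x_4,\dots,x_{3k+1}\}$, we have $\ell=k$. Then $\reg S/(I(G):z)+1=k+1\neq k=\reg S/(I(G),z)$. The "moreover" clause of \Cref{lem:ses} gives equality with the larger value, namely $\reg S/I(G)=k+1=\reg R/I(P_n)+1$.

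The only delicate points are the exact value of $\reg R/I(P_n)$ and the identification of $P$ as a matching plus isolated vertices. Both are routine given the $0/1$-string description in \Cref{lem:number-edges-P}. As a sanity check, I would also exhibit the increase combinatorially: $k$ disjoint middle edges plus an edge $zx_1$ form an induced matching of size $k+1$. Consistency with this is reassuring, though not needed.
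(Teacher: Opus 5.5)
Your upper bound, your identification of $I(G):z$ as a complete intersection, and your treatment of the three non-extremal cases all coincide with the paper's argument. The gap is the lower bound $\reg S/I(G)\ge k+1$ in the extremal case.

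There you have $\reg S/(I(G),z)=k$ and $\reg S/(I(G):z)=\ell=k$, and this is exactly the configuration the short exact sequence cannot resolve. Write $a=\reg S/(I(G):z)+1$ and $c=\reg S/(I(G),z)$. Then \Cref{lem:ses2}(d)--(f) force $\reg S/I(G)=\max\{a,c\}$ when $c\ge a$ or $c\le a-2$. When $c=a-1$, however, they yield only $\reg S/I(G)\le a$. Together with the induced-subgraph bound, this leaves both $k$ and $k+1$ open.

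The ``moreover'' clause of \Cref{lem:ses} is stated too strongly; the valid hypothesis is $\reg R/(J,z)\neq\reg R/(J:z)$. The paper itself contains a counterexample in precisely your numerical situation. For the $3k$-cycle with wide spokes one also has $\reg S/(I(G),z)=k=\reg S/(I(G):z)$, yet \Cref{thm:cycle_spokes} gives $\reg S/I(G)=k$, not $k+1$.

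Your sanity check does not fill the gap either. The edges $zx_1$ and $x_2x_3$ are joined by the edge $x_1x_2$, so they do not form an induced matching. In fact the induced matching number of $G$ is only $k$: for $k=1$, $G=C_5$ has induced matching number $1$ but regularity $2$. So no induced-matching argument can work.

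What is needed is a homological witness, which the paper obtains from Hochster's formula:
\begin{itemize}
\item By \Cref{lem:partial-cone-complex}, $\Delta(G)=\Delta(P_n)\cup\bigl(z*\Delta(P)\bigr)$, with intersection $\Delta(P)\simeq S^{k-1}$ by \Cref{lem:ind-join}.
\item $\Delta(P_{3k+1})$ is contractible by Kozlov.
\item Mayer--Vietoris then gives $\widetilde H_k(\Delta(G);\Bbbk)\cong\widetilde H_{k-1}(\Delta(P);\Bbbk)\cong\Bbbk$, whence $\reg S/I(G)\ge k+1$ by \Cref{rem:reg_pdim}(a).
\end{itemize}
The contractibility of $\Delta(P_{3k+1})$ is the input that separates this case from the wide-spoke cycle. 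There, the class of $\Delta(H)$ maps injectively into $\widetilde H_{k-1}(\Delta(C_{3k});\Bbbk)$ by \Cref{prop:inclusion-nonzero}, and so creates no new top homology.
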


\begin{proof}
    Consider the following short exact sequence
$$0 \longrightarrow S/(I(G):z) (-1) \xlongrightarrow{ \cdot z} S/I(G) \longrightarrow S/(I(G),z)  \longrightarrow  0.$$
Observe that  
\begin{align*}
 (I(G),z) & =  (I(P_n),z), \text{ and } \\
 I(G):z & =  I(P) + (x_i : x_i \in \mathcal{C})
\end{align*}

 It follows from \Cref{lem:ses2} (d) that
 $$\reg S/ I(G) \leq \max\{ \reg R/I(P_n), \reg R/I(P)+1 \}.$$
As we investigate this upper bound and the maximum of the right hand-side, we recall that   $\ell \leq \lfloor \frac{n-1}{3} \rfloor$ from \Cref{lem:number-edges-P} and $\reg R/I(P)=\ell$. In addition, we keep in mind that $ \reg R/I(P_n) \leq \reg S/I(G)$ since $P_n$ is an induced subgraph of $G$ and $\reg R/I(P_n)= \lfloor \frac{n+1}{3} \rfloor$ (see \cite[Theorem 4.7]{Selvi_cycles}).  Putting these together results with the following cases:
 
 \textbf{Case 1.} If $n=3k$, then $\reg S/I(G)=  k= \reg R/I(P_n)$, as desired.

 \textbf{Case 2.}  If $n=3k+2$, then $\reg S/I(G)=  k+1= \reg R/I(P_n)$, as desired.

 \textbf{Case 3.}  If $n=3k+1$, then  $k=\reg R/I(P_n)\leq \reg S/I(G) \leq \max\{\ell+1, k\}$.  Recall from Remark~\ref{rem:extremal-C} that $\ell=k$ only when $\mathcal{C}=\{x_1,x_4,\ldots, x_{3k+1}\}$. So,  when $\mathcal{C} \neq \{x_1,x_4,\ldots, x_{3k+1}\} $, we have  $\ell \leq k-1$ and
    $$\reg S/I(G)= k =\reg R/I(P_n).$$
Suppose $\mathcal{C} =\{x_1,x_4,\ldots, x_{3k+1}\} $.  Then  $\reg S/I(G)\leq k+1$ from the above discussion. It suffices to show $\reg S/I(G)\geq k+1$ which is equivalent to showing $\widetilde{H}_k (\Delta(G|_W)) \neq 0$ for some $W\subseteq V(G)$ from Remark~\ref{rem:reg_pdim} (a).

    Let $\Delta^0= \Delta (G), ~~\Delta=\Delta(P_n)$ and $A=\Delta(P)$. It then follows from \Cref{lem:partial-cone-complex}  that $\Delta^0= \Delta \cup c(A)$ where $\Delta \cap c(A)=A$. It is shown in \cite[Proposition 4.6]{KozlovInd} that $\Delta$ is contractible. Then, by the following Mayer-Vietoris sequence
     $$\cdots   \longrightarrow \widetilde{H}_{r} (\Delta)  \longrightarrow \widetilde{H}_r (\Delta^0) \longrightarrow \widetilde{H}_{r-1} (A) \longrightarrow \widetilde{H}_{r-1} (\Delta) \longrightarrow \cdots
    $$
   we have $\widetilde{H}_r (\Delta^0) \cong \widetilde{H}_{r-1} (A) $ for all $r$.

Since $P$ has $k$ disjoint edges, it follows from \Cref{lem:ind-join} that 
\(  A \ \simeq\ S^{k-1}\) and the only non-zero homology of $A$ occurs at degree $(k-1)$:
$$\widetilde H_{k}(\Delta^0;\Bbbk)\cong \widetilde H_{k-1}(A;\Bbbk)\cong  \Bbbk.$$ 
Then  $\reg S/I(G)\geq k+1$ since $\widetilde{H}_k (\Delta(G|_W)) \neq 0$ for $W=V(G)$. Hence, 
$$\reg S/I(G)= \reg R/I(P_n)= k+1$$
 in the exceptional case. This completes the proof.
\end{proof}

The following result will be useful as we investigate the projective dimension of $S/I(G)$. 

\begin{prop}\label{prop:ell-plus-C}
Let  $\mathcal{C}$  be a maximal independent set of $P_n$ and $ |\mathcal{C}|=t$. Then
\[
\ell + t \;\le\; \left\lfloor \frac{2n+1}{3} \right\rfloor
\]
where $\ell = |E(P)|$. Equivalently, if $n=3k$, then $\ell+t\le 2k$, and if $n=3k+1$ or $n=3k+2$, then
$\ell+t \le 2k+1$. Moreover, when $n=3k+1$ and $\mathcal{C} \neq \{x_1,x_4,\dots,x_{3k+1}\}$,
we have the sharper bound
\[
\ell + t \;\le\; \left\lceil \frac{2(n-1)}{3} \right\rceil = 2k.
\]
\end{prop}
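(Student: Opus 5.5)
The plan is to encode $\mathcal C$ by the positions of its vertices and read off both $\ell$ and $t$ from the gap structure, exactly as in the proof of Lemma~\ref{lem:number-edges-P} and Remark~\ref{rem:extremal-C}. Write $\mathcal C=\{x_{c_1},\dots,x_{c_t}\}$ with $c_1<\dots<c_t$.

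\emph{Step 1: the gap structure.} Independence and maximality of $\mathcal C$ give three facts:
\begin{itemize}
\item each gap $g_j:=c_{j+1}-c_j$ lies in $\{2,3\}$, since gaps of size $1$ break independence and gaps of size $\ge 4$ leave an undominated vertex;
\item $a:=c_1-1\in\{0,1\}$ and $b:=n-c_t\in\{0,1\}$, for the same reason at the two ends of the path;
\item the edges of $P$ correspond bijectively to the gaps of size $3$, as observed in the proof of Lemma~\ref{lem:number-edges-P}.
\end{itemize}
Counting vertices then gives
\[
n \;=\; a+b+1+\sum_{j=1}^{t-1} g_j \;=\; a+b+1+2(t-1)+\ell ,
\]
so $2t=n-a-b+1-\ell$.

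\emph{Step 2: rewrite $\ell+t$.} Substituting for $t$ gives
\[
\ell+t \;=\; \frac{n-a-b+1+\ell}{2} \;\le\; \frac{n+1+\ell}{2}.
\]
Thus $\ell+t$ is controlled entirely by $\ell$, with $a$ and $b$ only able to lower it.

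\emph{Step 3: apply the known bounds on $\ell$.} By Lemma~\ref{lem:number-edges-P}, $\ell\le (n-1)/3$. Hence
\[
\ell+t\le \frac{n+1+(n-1)/3}{2}=\frac{2n+1}{3},
\]
and since $\ell+t$ is an integer, $\ell+t\le\lfloor (2n+1)/3\rfloor$. For the reformulation I would simply evaluate this bound in each residue class:
\begin{itemize}
\item for $n=3k$, $\lfloor (6k+1)/3\rfloor=2k$;
\item for $n=3k+1$, $\lfloor (6k+3)/3\rfloor=2k+1$;
\item for $n=3k+2$, $\lfloor (6k+5)/3\rfloor=2k+1$.
\end{itemize}

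For the sharper bound, take $n=3k+1$ and $\mathcal C\neq\{x_1,x_4,\dots,x_{3k+1}\}$. Remark~\ref{rem:extremal-C}(b) gives $\ell\le k-1$. Then
\[
\ell+t\le \frac{3k+2+k-1}{2}=2k+\tfrac12 ,
\]
so $\ell+t\le 2k$ by integrality, and $2k=\lceil 2(n-1)/3\rceil=\lceil 2k\rceil$.

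\emph{Main obstacle.} The only delicate point is Step 1. One has to justify carefully that the endpoint offsets $a,b$ are at most $1$ and that gaps are only $2$ or $3$, since these facts make the vertex count exact. This is the same maximality argument already used in Lemma~\ref{lem:number-edges-P}: the strings $00$ at either end and $000$ in the interior are forbidden. So I expect it to be routine once stated, and the rest is arithmetic with integrality.
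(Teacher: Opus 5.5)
Your proof is correct and takes essentially the paper's route: the same gap encoding gives the identity $\ell+t=\frac{n+1-(a+b)+\ell}{2}$, where your $a+b$ is the paper's $\delta$. The only difference is that you cite $\ell\le (n-1)/3$ and the uniqueness of the extremal set from Lemma~\ref{lem:number-edges-P} and Remark~\ref{rem:extremal-C}(b), whereas the paper re-derives the bound on $q$ from $p\ge 0$. Your explicit use of integrality in the sharper case is the right step, since the bound there is $2k+\tfrac12$ before rounding; the paper's display writes this quantity as $2k$.
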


\begin{proof}
As in the proof of \Cref{lem:number-edges-P}, record $\mathcal{C}$ by a $0/1$ string $\gamma=(\gamma_1,\dots,\gamma_n)$ where $\gamma_i=1$ iff $x_i\in \mathcal{C}$.
Independence gives that no substring $11$ occurs. Maximality gives that no substring $000$ occurs, and
also $\gamma$ cannot begin or end with $00$ (otherwise an endpoint would have no neighbor in $\mathcal{C}$).

Consequently, between any two consecutive $1$'s there are either one or two $0$'s.  Equivalently, as we
move from one vertex of $\mathcal{C}$ to the next, the gap is either $01$ or $001$. Let $p$ be the number of gaps
of type $01$ and let $q$ be the number of gaps of type $001$ between the first and last $1$. Let
$\delta_0,\delta_t\in\{0,1\}$ record whether there is an initial $0$ before the first $1$ and a terminal $0$
after the last $1$, respectively, and set $\delta=\delta_0+\delta_t\in\{0,1,2\}$. Then the number of $1$'s is
\[
t=| \mathcal{C}|=p+q+1,
\]
and the segment from the first to the last $1$ has length $1+2p+3q$. Including the possible endpoint
$0$'s gives
\[
n=\delta+(1+2p+3q).
\]
Moreover, a gap of type $001$ contributes exactly one pair of consecutive $0$'s, hence exactly one edge
in the induced subgraph $P=P_n\setminus  \mathcal{C}$. So
\[
\ell=|E(P)|=q.
\]
Therefore
\[
\ell+t = q+(p+q+1)=p+2q+1.
\]

Now maximize $p+2q+1$ subject to $n=\delta+1+2p+3q$ with $p,q\ge 0$ and $\delta\in\{0,1,2\}$.
From $2p=n-\delta-1-3q\ge 0$ we have $q\le \frac{n-\delta-1}{3}$, and using $p=\frac{n-\delta-1-3q}{2}$,
\[
\ell+t=p+2q+1=\frac{n+1-\delta+q}{2}.
\]
Thus $\ell+t$ is increasing in $q$ and decreasing in $\delta$, so it is maximized when $\delta=0$ and $q$
is as large as possible. Then
\[
\ell+t \le
\begin{cases}
2k & \text{if } n=3k,\\
2k+1 & \text{if } n=3k+1,\\
2k+1 & \text{if } n=3k+2,
\end{cases}
\]
which is equivalent to $\ell+t\le \big\lfloor\frac{2n+1}{3}\big\rfloor$.

When $n=3k+1$, the value $2k+1$ occurs exactly when $\delta=0$ and $p=0$, i.e.\ all gaps are of type
$001$, so $ \mathcal{C}=\{x_1,x_4,\dots,x_{3k+1}\}$. For any other maximal independent set, either $\delta\ge 1$ or
$p\ge 1$, and in either case $q\le k-1$, which forces
\[
\ell+t=\frac{n+1-\delta+q}{2}\le \frac{(3k+1)+1+(k-1)}{2}=2k.
\]
This gives the sharper bound claimed in the proposition.
\end{proof}

Now, we are ready to discuss projective dimension.

\begin{thm}\label{thm:path_pdim}
Suspension of a path over a maximal independent set increases the projective dimension by one:
$$\pdim S/ I(G)= \pdim R/I(P_n) +1.$$
\end{thm}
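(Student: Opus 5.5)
We follow the template of the cycle case in \Cref{thm:cycle_reg_pdim}(b), using the short exact sequence
\[
0 \longrightarrow S/(I(G):z)(-1)\xrightarrow{\cdot z} S/I(G)\longrightarrow S/(I(G),z)\longrightarrow 0 .
\]
Here $(I(G),z)=(I(P_n),z)$, so $\pdim S/(I(G),z)=\pdim R/I(P_n)+1$. Also $I(G):z=I(P)+(x_i:x_i\in\mathcal C)$. By \Cref{lem:number-edges-P} and its proof, $P$ consists of $\ell$ pairwise disjoint edges together with isolated vertices, so $I(G):z$ is a complete intersection generated by $\ell$ coprime quadrics and $t=|\mathcal C|$ variables. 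Hence
\[
\pdim S/(I(G):z)=\ell+t .
\]

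For the upper bound we apply \Cref{lem:ses2}(a), which gives
\[
\pdim S/I(G)\le \max\{\ell+t,\ \pdim R/I(P_n)+1\}.
\]
It therefore suffices to show $\ell+t\le \pdim R/I(P_n)+1$. The bound $\bight I(P_n)\ge n-t$ used for cycles controls only $t$, not $\ell+t$, so we instead use the explicit value of $\pdim R/I(P_n)$. This is known from the literature (e.g.\ the path computations in \cite{Selvi_cycles}, or directly via Hochster's formula from the splitting of $P_n$) and equals $\lfloor (2n+1)/3\rfloor$ or $\lceil 2n/3\rceil$ depending on convention; we will cite the precise formula. \Cref{prop:ell-plus-C} gives $\ell+t\le\lfloor(2n+1)/3\rfloor$, which is at most $\pdim R/I(P_n)+1$ in every residue class, with room to spare. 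So the upper bound is just bookkeeping.

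For the lower bound we reuse the Hochster/Mayer--Vietoris argument from the proof of \Cref{thm:cover-cone-reg-pdim}(b), which is the more robust route. Set $p=\pdim R/I(P_n)$ and choose $W\subseteq V(P_n)$ and $r$ with $\widetilde H_r(\Delta(P_n|_W))\ne 0$ and $p=|W|-r-1$. Put $W'=W\cup\{z\}$ and $K=W\setminus\mathcal C$. By \Cref{lem:partial-cone-complex},
\[
\Delta(G|_{W'})=\Delta(P_n|_W)\cup \bigl(z*\Delta(P|_K)\bigr),\qquad \Delta(P_n|_W)\cap \bigl(z*\Delta(P|_K)\bigr)=\Delta(P|_K).
\]
Unlike the vertex-cover case, $\Delta(P|_K)$ need not be a simplex, so Mayer--Vietoris only yields
\[
\widetilde H_r(\Delta(P|_K))\to \widetilde H_r(\Delta(P_n|_W))\to \widetilde H_r(\Delta(G|_{W'})).
\]
This is the main obstacle: we need to choose the witness $W$ so that the intersection does not kill the class. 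The first thing we would try is to make $K$ independent in $P$, so that $\Delta(P|_K)$ is a simplex (or $K=\emptyset$, giving a disjoint point) and the argument of \Cref{thm:cover-cone-reg-pdim}(b) goes through verbatim. Paths admit explicit witnesses: $P_n|_W$ is a disjoint union of edges and isolated-free paths $P_2,P_3$-type pieces realizing $\pdim$, and one can often shift such a witness so that at most one endpoint of each gap of type $001$ lies in $W$. If that fails, the fallback is to bypass homology and use \Cref{rem:bight_pdim}: by the same argument as for cycles, $(V(P_n)\setminus\mathcal C)\cup\{z\}$ is a minimal vertex cover of $G$, so $\pdim S/I(G)\ge n-t+1$. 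We would then exhibit a suitable maximal independent set, or better a larger minimal vertex cover of $G$ containing $z$, of size $\pdim R/I(P_n)+1$. Concretely, we would take a minimal vertex cover $\mathcal M$ of $P_n$ with $|\mathcal M|=\bight I(P_n)=\pdim R/I(P_n)$ (paths satisfy $\pdim=\bight$), modify it near vertices of $\mathcal C$ so that $\mathcal M\cup\{z\}$ remains minimal, and check this case by case on $n \bmod 3$.
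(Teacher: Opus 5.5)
The upper bound is fine; the gap is the lower bound, which you never actually establish. Your upper bound is the paper's argument and works once you use the correct value $d=\pdim R/I(P_n)=\lceil 2(n-1)/3\rceil=\lfloor 2n/3\rfloor$. Neither formula you offer is right in general: for $n=4$ both give $3$, but $d=2$. With the correct value there is also no room to spare when $n=3k+1$, since the extremal $\mathcal C=\{x_1,x_4,\dots,x_{3k+1}\}$ has $\ell+t=2k+1=d+1$.

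\textbf{Your primary route's condition cannot be met.} It asks for a Hochster witness $W$ with $K=W\setminus\mathcal C$ independent in $P$, but witnesses for paths are essentially forced. If $\widetilde H_r(\Delta(P_n|_W))\neq 0$, write $P_n|_W$ as a disjoint union of paths and use the join formula: this gives $|W|-r-1\le \tfrac23|W|$. So for $n\not\equiv 1\pmod 3$ the only witness is $W=V(P_n)$. Then $K=V(P_n)\setminus\mathcal C$ contains all $\ell$ edges of $P$, and no shifting is possible once $\ell\ge 1$ (e.g.\ $n=6$, $\mathcal C=\{x_1,x_4,x_6\}$).

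\textbf{Your fallback is the paper's route, and it fails in general.} For $n=3k$, the only maximal independent set of $P_n$ of size $k$ is $\{x_2,x_5,\dots,x_{3k-1}\}$, because the closed neighbourhoods must partition the vertices into consecutive triples. A maximal independent set of $G$ containing $z$ is $z$ together with a maximal independent set of $P$. In the notation of \Cref{prop:ell-plus-C} it has size $1+\ell+e=t+\delta$, and $3(t+\delta)=n+2+2\delta+p$ forces $t+\delta\ge k+1$. Hence $\bight I(G)\le 2k<d+1$ whenever $\mathcal C$ misses $\{x_2,x_5,\dots,x_{3k-1}\}$. The smallest case is $n=3$, $\mathcal C=\{x_1,x_3\}$: there $G=C_4$ has $\bight I(G)=2$ but $\pdim S/I(G)=3$. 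The paper's argument has the same hole: when $n\equiv 0\pmod 3$ and $x_1\in\mathcal C$, its set $\mathcal D=\{x_1,x_4,\dots,x_{n-2}\}$ does not dominate $x_n$.

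\textbf{How to rescue your primary route.} Mayer--Vietoris only needs $\Delta(P|_K)$ to be contractible, not a simplex. That holds whenever $P|_K$ has an isolated vertex, which is then a cone point. The graph $P$ has $e=p+\delta$ isolated vertices, and $e=0$ exactly for the extremal configuration. This gives three cases.
\begin{itemize}
\item For $n\not\equiv 1\pmod 3$, take $W=V(P_n)$, using $\Delta(P_{3k})\simeq S^{k-1}$ and $\Delta(P_{3k+2})\simeq S^{k}$. Then $\widetilde H_r(\Delta(G))\cong\widetilde H_r(\Delta(P_n))\neq 0$ and $|V(G)|-r-1=d+1$.
\item For $n=3k+1$ and $\mathcal C$ non-extremal, take $W=V(P_n)\setminus\{x_1\}$ or $V(P_n)\setminus\{x_n\}$, so that $P_n|_W\cong P_{3k}$. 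Delete an endpoint lying in $\mathcal C$ if there is one; then $P|_K=P$ still has an isolated vertex. Otherwise both endpoints are isolated in $P$, and one survives the deletion.
\item For the extremal $\mathcal C$, $\widetilde H_k(\Delta(G))\cong\widetilde H_{k-1}(\Delta(P))\neq 0$, as in the proof of \Cref{thm:path_reg}. This gives $|V(G)|-k-1=2k+1=d+1$.
\end{itemize}
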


\begin{proof}
Let $d:=\pdim R/I(P_n)$. Recall that $d=\left\lceil\frac{2(n-1)}{3}\right\rceil$ by \cite[4.1.2 Proposition]{Bouchat}.
Consider the short exact sequence
\[
0 \longrightarrow S/(I(G):z)(-1)\xrightarrow{\cdot z} S/I(G)\longrightarrow S/(I(G),z)\longrightarrow 0.
\]
As in \Cref{thm:path_reg}, we have $(I(G),z)=(I(P_n),z)$ and $I(G):z=I(P)+(x_i:x_i\in \mathcal{C})$, where
$P$ is the induced subgraph of $P_n$ on $V(P_n)\setminus \mathcal{C}$ and $\ell=|E(P)|$. Since $P$ is a disjoint union
of $\ell$ edges and isolated vertices, the ideal $I(P)+(x_i:x_i\in \mathcal{C})$ is a complete intersection on
$\ell$ quadrics and $|\mathcal{C}|$ linear forms. Hence
\begin{align*}
   \pdim S/(I(G),z)&=\pdim R/I(P_n)+1=d+1,\\
\pdim S/(I(G):z)&=\ell+|\mathcal{C}|. 
\end{align*}
It then follows from \Cref{lem:ses2} that
\[
\pdim S/I(G)\le \max\{\ell+|\mathcal{C}|,\ d+1\}.
\]
Using \Cref{prop:ell-plus-C} results with $\ell+|\mathcal{C}|\le \left\lfloor\frac{2n+1}{3}\right\rfloor\le d+1$. So $\pdim S/I(G)\le d+1$.

For the reverse inequality, we produce a minimal vertex cover of $G$ with $d+1$ vertices.
First note that any maximal independent set $\mathcal{C}$ of the path $P_n$ contains $x_1$ or $x_2$.
Define an independent set $\mathcal{D}\subseteq V(P_n)$ by
\[
\mathcal{D}=
\begin{cases}
\{x_1,x_4,x_7,\dots\} & \text{if } x_1\in \mathcal{C},\\
\{x_2,x_5,x_8,\dots\}\cup\{x_n\}\ \text{(only if }n\equiv 1\!\!\pmod 3) & \text{if } x_1\notin \mathcal{C}.
\end{cases}
\]
One can verify that $\mathcal{D}$ is an independent dominating set of $P_n$, hence a maximal independent set,
and $|\mathcal{D}|=\lceil n/3\rceil$. By construction $\mathcal{D}\cap \mathcal{C}\neq\emptyset$. So $z$ is adjacent to a vertex of $\mathcal{D}$. Therefore, $\mathcal{D}$ is also a maximal independent set in $G$. Thus $V(G)\setminus \mathcal{D}$ is a minimal vertex cover of $G$ of size
\[
|V(G)\setminus \mathcal{D}|=(n+1)-\left\lceil\frac{n}{3}\right\rceil
=
\left\lceil\frac{2(n-1)}{3}\right\rceil+1
=d+1.
\]
Consequently $\pdim S/I(G) \ge \bight I(G)\ge d+1$  by \Cref{rem:bight_pdim}.
Combining with the upper bound yields $\pdim S/I(G)=\pdim R/I(P_n)+1$, as desired.
\end{proof}

Before we relate the $\mathfrak a$-invariants of $P_n$ and $G$, we first determine this invariant for $P_n$. As in the cycle case, this is a consequence of \cite[Theorem 4.5]{biermann2024degree} but we provide a proof using the multiplicity of $-1$ as a root of the independence polynomial for completeness.

\begin{thm}\label{cor:only_path_a_inv}
$
\mathfrak a (R/I(P_n))= 
\begin{cases}
 0, & n\equiv 0,2\pmod 3,\\
 -1, & n\equiv 1\pmod 3.
\end{cases}
$
\end{thm}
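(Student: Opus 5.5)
By \Cref{rem:a_M}, $\mathfrak a(R/I(P_n))=-M(P_n)$, so the task is to compute the multiplicity of $x=-1$ as a root of $P_{P_n}(x)$. The plan is to show two things. For $n\equiv 0,2\pmod 3$ we will show $P_{P_n}(-1)\neq 0$, which gives $M(P_n)=0$ by \Cref{rem:deg-h}. For $n\equiv 1\pmod 3$ we will show $P_{P_n}(-1)=0$ but $P_{P_n}'(-1)\neq 0$, so that $M(P_n)=1$.

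\textbf{Value at $-1$.} Set $a_m:=P_{P_m}(-1)$. As in the proof of \Cref{cor:only_cycle_a-inv}, the recurrence of \Cref{rem:ind_poly_path_cycle}(a) with $a_0=1$ and $a_1=0$ gives the $6$-periodic sequence $1,0,-1,-1,0,1,\dots$. Hence $a_m=0$ exactly when $m\equiv 1\pmod 3$, and $a_m=\pm1$ otherwise. This settles the cases $n\equiv 0,2\pmod 3$.

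\textbf{Derivative at $-1$.} Set $b_m:=P_{P_m}'(-1)$. Differentiating the recurrence $P_{P_m}(x)=P_{P_{m-1}}(x)+xP_{P_{m-2}}(x)$ and evaluating at $x=-1$ gives
\[
b_m=b_{m-1}-b_{m-2}+a_{m-2},\qquad b_0=0,\quad b_1=1.
\]
Iterating, we get $b_2=2$ (since $P_{P_2}=1+2x$), $b_3=1$, $b_4=-2$, and so on. We want $b_{3j+1}\neq 0$ for all $j$. The homogeneous part of the recurrence has period $6$, and the forcing term $a_{m-2}$ is also $6$-periodic, so we expect $b_m$ to grow linearly along each residue class. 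We will therefore look for a closed form such as $b_{3j+1}=(-1)^j(j+1)$ (to be checked against $b_1=1$, $b_4=-2$) and prove it by induction on $j$ using the three-step recurrence.

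Alternatively, we can avoid guessing the closed form. Apply \Cref{lem:delete_contract} at the vertex $x_2$ to get $P_{P_{n}}(x)=(1+x)P_{P_{n-2}}(x)+xP_{P_{n-3}}(x)$. Iterating this along $x_1,x_2,x_3$ factors out $(1+x)$ cleanly when $n\equiv 1\pmod 3$. Concretely, $P_{P_{3j+1}}(x)=(1+x)Q_j(x)$, and we then show $Q_j(-1)=(-1)^j(j+1)\neq 0$ via a recurrence $Q_j(-1)=-Q_{j-1}(-1)+(\text{term from }a)$.

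The main obstacle is this derivative (equivalently, second-order) computation. Showing that the vanishing at $-1$ is simple, rather than of higher order, requires tracking an inhomogeneous recurrence, and this is where a sign or indexing error could creep in. To guard against that, we will verify the small cases $n=4$ and $n=7$ directly: for instance, $P_{P_4}=1+4x+3x^2=(1+x)(1+3x)$, and at $x=-1$ the factor $1+3x$ equals $-2\neq 0$. Once this is established, $M(P_n)=1$ for $n\equiv 1\pmod 3$, and the statement follows.
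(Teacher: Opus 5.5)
Your proposal is correct and matches the paper's proof: you reduce to $M(P_n)$, use the $6$-periodic sequence $a_m=P_{P_m}(-1)$, and prove $b_{3j+1}=P_{P_{3j+1}}'(-1)=(-1)^j(j+1)\neq 0$ by induction via the three-step relation $b_{3j+4}=-b_{3j+1}+a_{3j+2}$, which comes from applying the derivative recurrence twice and using $a_{3j+1}=0$. Your alternative factorization $P_{P_{3j+1}}=(1+x)Q_j$ with $Q_j=P_{P_{3j-1}}+xQ_{j-1}$ is the same computation in different form, since $Q_j(-1)=P_{P_{3j+1}}'(-1)$.
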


\begin{proof}
By  \Cref{rem:a_M}, it suffices to compute $M(P_n)$. 

As in the proof of \Cref{cor:only_cycle_a-inv}, set \(a_m:=P_{P_m} (-1)\) and recall that \((a_m)\) is \(6\)-periodic with values \(1,0,-1,-1,0,1,\ldots\). In particular,
\[
a_m\neq 0 \ \text{if } m\equiv 0,2 \pmod 3,
\qquad
a_m=0 \ \text{if } m\equiv 1 \pmod 3.
\]
Hence \(M(P_n)=0\) if \(n\equiv 0,2\pmod 3\). When \(n\equiv 1\pmod 3\), it remains to show that the root at \(-1\) is simple, i.e. $M(P_n)=1$.   Let \(b_m:=P_{P_m}'(-1)\). Differentiating \(P_{P_m} (x)=P_{P_{m-1}}(x)+x P_{P_{m-2}}(x)\) and evaluating at \(-1\) gives
\begin{equation}\label{eq:bn-recur}
b_m=b_{m-1}+a_{m-2}-b_{m-2}\qquad (m\ge 2),
\end{equation}
with \(b_0=0\) and \(b_1=1\). Taking \(m=3k+4\) in \eqref{eq:bn-recur} and using \eqref{eq:bn-recur} once more at
\(m=3k+3\) (together with \(a_{3k+1}=0\)) yields
\[
b_{3k+4}=- b_{3k+1}+a_{3k+2}.
\]
From the \(6\)-periodicity, \(a_{3k+2}=(-1)^{k+1}\). Thus the above relation
becomes
\begin{equation}\label{eq:b-3k+4}
b_{3k+4}=- b_{3k+1}+(-1)^{k+1}\qquad (k\ge 0).
\end{equation}
We claim that
\[
b_{3k+1}=(-1)^k(k+1)\qquad (k\ge 0).
\]
For \(k=0\), this is \(b_1=P_{P_1}'(-1)=1\). Assume it holds for some \(k\). Since \(3(k+1)+1=3k+4\),
\eqref{eq:b-3k+4} gives
\[
b_{3k+4}=- b_{3k+1}+(-1)^{k+1}
=- \bigl((-1)^k(k+1)\bigr)+(-1)^{k+1}=(-1)^{k+1}(k+2),
\]
which is the desired formula for \(k+1\). This completes the induction. Therefore
\[
P_{P_{3k+1}}'(-1)=b_{3k+1}=(-1)^k(k+1)\neq 0.
\]
So \(M(P_n)=1\) when \(n\equiv 1\pmod 3\). This completes the proof.
\end{proof}

\begin{thm}\label{thm:path_a_inv}
Suspension of a path over a maximal independent set preserves the $\mathfrak a$-invariant
 $$\mathfrak{a} (S/ I(G))= \mathfrak{a} (R/I(P_n))$$  
 except when $n\equiv 1 \pmod 3$ and $\mathcal{C}=\{x_1,x_4,\ldots, x_{3k+1}\}$. In this case, we have 
 $$\mathfrak{a} (S/ I(G))= \mathfrak{a} (R/I(P_n))+1.$$  
\end{thm}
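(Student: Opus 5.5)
We argue as in the proof of \Cref{thm:cycle_a-inv}. By \Cref{rem:a_M} it suffices to compute $M(G)$ and compare it with $M(P_n)$, which \Cref{cor:only_path_a_inv} gives: $M(P_n)=0$ for $n\equiv 0,2\pmod 3$ and $M(P_n)=1$ for $n\equiv 1\pmod 3$.

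\textbf{The polynomial $P_G$.} Apply \Cref{lem:delete_contract} at $z$. Since $G-z=P_n$ and $G-N[z]=P$, we get
\[
P_G(x)=P_{P_n}(x)+x\,P_P(x).
\]
To identify $P$, use the gap bookkeeping from the proof of \Cref{prop:ell-plus-C}. With $t=|\mathcal C|=p+q+1$, $\ell=q$ and $n=\delta+1+2p+3q$, the graph $P$ has $n-t=\delta+p+2q$ vertices. It is a disjoint union of $q$ edges and $\delta+p$ isolated vertices. By multiplicativity over disjoint unions,
\[
P_G(x)=P_{P_n}(x)+x(1+x)^{\delta+p}(1+2x)^{q}.
\]
Write $y=1+x$. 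The added term has order exactly $\delta+p$ at $y=0$, and its lowest coefficient is $(-1)\cdot(-1)^q=(-1)^{q+1}$.

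\textbf{Case $n\equiv 0,2\pmod 3$.} If $\delta+p=0$, then $n=1+3q\equiv 1$, which is impossible here. So $\delta+p\ge 1$, and therefore $P_G(-1)=P_{P_n}(-1)=a_n\neq 0$. This gives $M(G)=0=M(P_n)$.

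\textbf{Case $n=3k+1$.} Here $P_{P_n}$ has order exactly $1$ at $y=0$, with linear coefficient $b_{3k+1}=(-1)^k(k+1)$ from the proof of \Cref{cor:only_path_a_inv}. There are three subcases.
\begin{enumerate}
\item[(i)] If $\delta+p\ge 2$, the added term has order at least $2$, so $M(G)=1=M(P_n)$.
\item[(ii)] If $\delta+p=1$, the coefficient of $y$ in $P_G$ is $(-1)^k(k+1)+(-1)^{q+1}$. Since $\mathcal C$ is then not the extremal set, $n\ge 4$ and so $k\ge 1$. Hence $|(-1)^k(k+1)|\ge 2>1$, the coefficient is nonzero, and $M(G)=1$.
\item[(iii)] If $\delta+p=0$, then $\mathcal C=\{x_1,x_4,\dots,x_{3k+1}\}$ and $q=k$. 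Then $P_G(-1)=0+(-1)^{k+1}\neq 0$, so $M(G)=0=M(P_n)-1$, giving $\mathfrak a(S/I(G))=\mathfrak a(R/I(P_n))+1$.
\end{enumerate}
Subcase (iii) is exactly the extremal configuration characterized in \Cref{rem:extremal-C}, which yields the stated exception.

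\textbf{Main obstacle.} The delicate point is subcase (ii), where both summands vanish to first order at $x=-1$ and cancellation must be ruled out. This is where the explicit derivative value $b_{3k+1}=(-1)^k(k+1)$ is needed: its absolute value is at least $2$ for $k\ge 1$, so it cannot cancel the added coefficient $\pm1$.
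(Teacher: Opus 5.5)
Your proposal is correct and follows the paper's proof essentially step for step. It uses the same identity $P_G(x)=P_{P_n}(x)+x(1+x)^{e}(1+2x)^{\ell}$ with $e=\delta+p$ from the gap bookkeeping, the same case split on $e$ (evaluating at $-1$, then passing to the linear coefficient/derivative when $n\equiv 1\pmod 3$), and the same bound $|b_{3k+1}|=k+1\ge 2$ to rule out cancellation when $e=1$.

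As a minor aside, for $n\equiv 1\pmod 3$ the relation $n=\delta+1+2p+3q$ forces $\delta\equiv p\pmod 3$, so $\delta+p=1$ never actually occurs. Your ``main obstacle'' subcase (ii), like the paper's $e=1$ case, is therefore vacuous, though handling it does no harm.
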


\begin{proof}
By Remark \ref{rem:a_M}, the theorem is equivalent to the following statement:
\[
M(G)=M(P_n)
\]
except when $n\equiv 1\pmod 3\text{ and }\mathcal C=\{x_1,x_4,\ldots,x_{3k+1}\}$. In this case \(M(G)=M(P_n)-1\). 

Since we computed $M(P_n)$ (hence $\mathfrak a (R/I(P_n))$) in \Cref{cor:only_path_a_inv}, we focus on  \(M(G)\).

Adapt the notation from the proof of  \Cref{prop:ell-plus-C} where we encode \(\mathcal C\) by the \(0/1\)-string
\(\gamma_1\cdots\gamma_n\) with \(\gamma_i=1\) if and only if \(x_i\in\mathcal C\). Recall that \(P=G-N[z]\) is a disjoint union of \(\ell\) edges and \(e\) isolated vertices where
\[
e=|\mathcal C|-\ell-1+\delta.
\]
It then follows from \Cref{lem:delete_contract} and multiplicativity of independence polynomials over disjoint unions that
\begin{equation}\label{eq:PG-path}
P_G(x)=P_{P_n}(x)+x(1+x)^e(1+2x)^\ell.
\end{equation}

We now determine \(M(G)\) from \eqref{eq:PG-path}.

\textbf{Case 1.} If \(e\ge 1\), then \((1+x)^e\) vanishes at \(x=-1\), so
\[
P_G(-1)=P_{P_n}(-1)=a_n.
\]
If \(n\equiv 0,2\pmod 3\), then \(a_n\neq 0\) and hence \(M(G)=0=M(P_n)\). Then
   \[ 
    \mathfrak a (R/I(G))=\mathfrak a (R/I(P_n))=0.
   \]
If \(n\equiv 1\pmod 3\), then \(a_n=0\) and \(M(G)\ge 1\). Recall that $M(P_n)=1$ in this case. Our goal is to show $M(G)=1$. For this purpose, we compute \(P_G'(-1)\) by differentiating \eqref{eq:PG-path}.
\begin{itemize}
    \item If \(e\ge 2\) then  \(P_G'(-1)=P_{P_n}'(-1)=b_n\neq 0\).  Thus, \(M(G)=1=M(P_n)\) and
    \[ 
    \mathfrak a (R/I(G))= \mathfrak a (R/I(P_n))= -1.
    \]
    \item If \(e=1\), then 
\[
P_G'(-1)=b_n+\frac{d}{dx}\Bigl[x(1+x)(1+2x)^\ell\Bigr]\Big|_{x=-1}=b_n+(-1)^{\ell+1}.
\]
 Here \(n=3k+1\) and \(e=1\) forces \(n\ge 4\). Hence \(k\ge 1\) and \(|b_n|=k+1\ge 2\).
Therefore \(b_n+(-1)^{\ell+1}\neq 0\). Thus,  \(M(G)=1=M(P_n)\) and
   \[ 
    \mathfrak a (R/I(G))=\mathfrak a (R/I(P_n))=-1.
   \]
\end{itemize}
\textbf{Case 2.}  If \(e=0\), then \(n\equiv 1\pmod 3\) and \(\mathcal C=\{x_1,x_4,\ldots,x_{3k+1}\}\). This follows from the following discussion:
Recall from the proof of \Cref{prop:ell-plus-C}  that $|\mathcal C|=p+q+1$ and $ \ell=q$ where $p$ is the number of gaps
of type $01$ and  $q$ is the number of gaps of type $001$ between the first and last $1$. Thus
\[
e=|\mathcal C|-\ell-1+\delta=p+\delta\ge 0.
\]
In particular, \(e=0\) holds if and only if \(p=0\) and \(\delta=0\), i.e. all gaps have size \(3\) and
\(x_1,x_n\in\mathcal C\). When \(n\equiv 1\pmod 3\),  this forces
\(\mathcal C=\{x_1,x_4,\ldots,x_{3k+1}\}\).

 Evaluating \eqref{eq:PG-path} at \(-1\) gives
\[
P_G(-1)=P_{P_n}(-1)+(-1)\cdot(1+2(-1))^\ell
      =0+(-1)^{\ell+1}\neq 0.
\]
So \(M(G)=0\), while \(M(P_n)=1\). Hence, we have the following in the exceptional case
   \[ 
    \mathfrak a (R/I(G))=\mathfrak a (R/I(P_n))+1=0.
   \]
     This completes the proof. 
\end{proof}

\section{Closing remarks and further directions}\label{sec:final}

This paper studies how three homological invariants of edge ideals (projective dimension,
regularity, and the $\mathfrak a$-invariant) behave under \emph{suspension}, i.e. adjoining a new
vertex $z$ to a graph $H$ and joining it to a prescribed subset $\mathcal C\subseteq V(H)$.  Let $G=H(\mathcal{C})$ be the $\mathcal{C}$-suspension of $H$ where $R=\Bbbk [V(H)]$ and $S=R[z]$.

In the
families treated here, suspension over minimal vertex covers and over the maximal independent sets of
cycles (and, with one explicit exception, of paths) produces a remarkably rigid outcome:
regularity and the $\mathfrak a$-invariant are preserved, while projective dimension typically increases by
exactly one.  By contrast, suspension over all vertices preserves regularity but forces
$\pdim (S/I(G))=|V(H)|$.

A convenient starting point for broader generalizations is the short exact sequence obtained from
multiplication by $z$,
\[
0\longrightarrow S/(I(H\setminus \mathcal{C})+(x_i: x_i\in\mathcal C))(-1)\xrightarrow{\ \cdot z\ } S/I(G)
\longrightarrow S/(I(G),z)\cong R/I(H)\longrightarrow 0,
\]
together with the parallel decompositions of the independence complex (or equivalently, of
independence polynomials under deletion--contraction).  The results of this paper suggest that
under suitable hypotheses on $\mathcal C$ these constructions behave minimally, leading to controlled changes in $\reg, \pdim$ and $\mathfrak a$-invariant.

\begin{question}
For a graph $H$ and a subset $\mathcal C\subseteq V(H)$, give combinatorial conditions on $\mathcal C$
guaranteeing
\[
\pdim (S/I(G))=\pdim (R/I(H))+1.
\]
More generally, describe the set of possible values of
$\pdim (S/I(G))-\pdim (R/I(H))$ in terms of the interaction between $\mathcal C$ and the
 structure of $H$.
\end{question}

In the main families studied here, regularity is preserved under suspensions, except for a unique
extremal configuration for paths. 

\begin{question}
 Identify families of graphs  (e.g.\ trees, unicyclic graphs,
chordal graphs) for which suspension over every maximal independent set preserves regularity, and
characterize when exceptional suspension sets force $\reg$ to increase.  Is there a uniform upper bound
on $\reg S/I(G)-\reg R/I(H)$ in terms of classical invariants of $H$ and the structure of $\mathcal C$?
\end{question}

It was discussed in \cite{biermann2026realizableregdeghpairs} that $\mathfrak{a}(R/I(H)=-M(H)$ where $M(H)$ stands for the multiplicity of $x=-1$ as a root of the independence polynomial $P_H(x)$. Thus, our $\mathfrak a$-invariant computations ultimately reduce to finding $M(H)$. 

\begin{question}
 Develop a general criterion, stated purely in graph-theoretic terms, for when $\mathfrak a (S/I(G))=0$ (in other words,  $M(G)=0$)? 
 Similarly, find a criterion for when $\mathfrak a (S/I(G))=-1$.  In particular, when $P_G(-1)=0$,
under what hypotheses on $\mathcal C$ does one always have $P_G'(-1)\neq 0$, forcing
$\deg h_{S/I(G)}(t)=\alpha(G)-1$ and hence determining $\mathfrak a (S/I(G))$?
\end{question}

These questions point toward a broader goal suggested by our constructions:

\emph{Describe how a selection of $\mathcal C$ inside $H$ controls not only the  invariants $\pdim$ and $\reg$,
but also finer data such as the shape of the Betti table.}

\textbf{Acknowledgements.} The authors thank Des Martin, Hugh Geller, and Henry Potts--Rubin for inspiring this work. They are also grateful to Takayuki Hibi for helpful conversations that shaped the approach of the paper. The first author was supported by NSF grant DMS--2418805.

\bibliographystyle{abbrv}
\bibliography{ref}

\end{document}